\DeclareMathOperator*{\argmin}{arg\,min}
\newcommand{\B}{{\mathbb{B}}}
\newcommand{\R}{{\mathbb{R}}}
\newcommand{\I}{{\mathbb{I}}}
\newcommand{\Su}{{\mathcal{S}}}
\newcommand{\loss}{{\mathcal{L}}}
\newcommand{\regu}{{\mathcal{P}}}
\newcommand{\obj}{\Psi}
\newcommand{\T}{{\mathcal{T}}}
\newcommand{\Aa}{{\mathbb{A}}}
\newcommand{\Bb}{{\mathbb{B}}}
\newcommand{\X}{{\mathscr{X}}}
\newcommand{\N}{{\mathcal{N}}}
\newcommand{\XX}{{\mathfrak{X}}}
\newcommand{\Z}{{\mathfrak{Z}}}
\newcommand{\Q}{{\mathcal{Q}}}
\newcommand{\normm}[1]{{\left\vert\kern-0.25ex\left\vert\kern-0.25ex\left\vert #1
\right\vert\kern-0.25ex\right\vert\kern-0.25ex\right\vert}}
\newcommand{\inm}[2]{\langle\langle #1,#2 \rangle\rangle}
\newtheorem{Assumption}{Assumption}
\newtheorem{Lemma}{Lemma}
\newtheorem{Definition}{Definition}
\newtheorem{Proposition}{Proposition}
\newtheorem{Remark}{Remark}
\newtheorem{Theorem}{Theorem}
\title{Low-rank matrix estimation via nonconvex spectral regularized methods in errors-in-variables matrix regression}
\author[1]{Xin Li}
\author[2]{Dongya Wu}
\affil[1]{School of Mathematics, Northwest University, Xi’an, 710069, P. R. China}
\affil[2]{School of Information Science and Technology, Northwest University, Xi’an, 710069, P. R. China}
\date{}
\begin{document}
\maketitle

\begin{abstract}
High-dimensional matrix regression has been studied in various aspects, such as statistical properties, computational efficiency and application to specific instances including multivariate regression, system identification and matrix compressed sensing. Current studies mainly consider the idealized case that the covariate matrix is obtained without noise, while the more realistic scenario that the covariates may always be corrupted with noise or missing data has received little attention. We consider the general errors-in-variables matrix regression model and proposed a unified framework for low-rank estimation based on nonconvex spectral regularization. Then in the statistical aspect, recovery bounds for any stationary points are provided to achieve statistical consistency. In the computational aspect, the proximal gradient method is applied to solve the nonconvex optimization problem and is proved to converge in polynomial time. Consequences for specific  matrix compressed sensing models with additive noise and missing data are obtained via verifying corresponding regularity conditions. Finally, the performance of the proposed nonconvex estimation method is illustrated by numerical experiments.
\end{abstract}

{\bf Keywords:} Errors-in-variables matrix regression; Spectral regularization; Nonconvex optimization; Statistical consistency; Proximal gradient method

\section{Introduction}\label{sec-intro}

Matrix regression model, as one of the most popular and important model in the filed of signal processing and machine learning, has received extensive attention and gained widespread success in recent decades \cite{wainwright2014structured, zhou2014regularized}. In order to alleviate the high-dimensional challenge that the dimension of the underlying matrix parameter $d_1\times d_2$ far exceeds the sample size $N$, some low-dimensional structural constraints must be imposed on the parameter space to achieve statistical estimation consistency. The most commonly-used constraint is the rank constraint, which refers to that the true matrix parameter is of rank $r$ much less than the matrix dimension $d_1\times d_2$. However, unfortunately, due to the nonsmoothness and nonconvexity of the rank function, minimizing the rank of a matrix is NP-hard \cite{Natarajan1995SparseAS}. Therefore, researchers seek to find other surrogates to estimate a low-rank matrix, such as the nuclear norm, the Schatten-$p$ norm, the max norm and the row/column sparse functions; see \cite{recht2010guaranteed,negahban2011estimation,candes2010power,zhou2014regularized} and references therein. Among these surrogates, the nuclear norm, analogous to the $\ell_1$ norm as a convex relaxation of the cardinality for a vector in linear regression, is the convex envelope of the rank function under unit norm constraint \cite{Fazel2001ARM}, and thus is widely used for low-rank approximation. In the statistical aspect, there have been fruitful strong guarantees that the nuclear norm regularization method often generate a solution near the true parameter with lower rank, even a minimum rank solution in special cases \cite{recht2010guaranteed,negahban2011estimation,candes2010power}. In the computational aspect, even though the nuclear norm is nonsmooth, the resulting optimization problem is still computationally favorable and can be solved efficiently with fast convergence rates using modern optimization algorithms such as the proximal gradient algorithm \cite{agarwal2012fast, Mazumder2010SpectralRA}.

Despite the success, the nuclear norm regularization method also inherited the deficiency of the $\ell_1$ norm penalized method in linear regression that larger and more informative singular values are penalized to induce significant estimation bias. On the other hand, extensive studies have shown that nonconvex regularizers such as the smoothly clipped absolute deviation penalty (SCAD) \cite{fan2001variable} and the minimax concave penalty (MCP) \cite{Zhang2010} eliminate the estimation bias to a degree and enjoy more refined statistical convergence rates in linear regression. In light of the advantages, researchers have begun to adapt the nonconvex regularizers to matrix regression models, that is to impose nonconvex regularizers on the singular values of a matrix. For example, authors in \cite{zhou2014regularized} proposed a class of regularized matrix regression methods based on spectral regularization that works for for a variety of penalization functions, including the Lasso \cite{tibshirani1996regression}, Elastic net \cite{zou2005regularization}, SCAD and MCP. They also developed a highly efficient and scalable estimation algorithm and established nonasymptotic convergence rates. \cite{Lu2014GeneralizedNN} proposed an iteratively reweighted nuclear norm algorithm to solve the nonconvex nonsmooth low-rank minimization problem and proved that any limit point is a stationary point. 
A unified framework was presented in \cite{gui2015towards} low-rank matrix estimation with nonconvex regularizers. The nonconvex estimator is solved by a proximal gradient homotopy algorithm and is shown to enjoy a faster statistical convergence rate than that of the convex nuclear norm regularized estimator. In order to avoid the expensive full SVD in the proximal step, authors in \cite{yao2015fast} and \cite{yao2017large} proposed to automatically threshold the singular values obtained from the proximal operator via the power method and achieved a fast algorithmic convergence rate. 

The results above are based on the idealized hypothesis that the collected covariates are clean enough, while noisy response variables are always considered to be noisy in statistical models. However, in many real-world applications, due to economical or instrumental constraints, the collected covariates may usually be perturbed and thus tend to be noisy or have missing values. This is called the errors-in-variables model. There have been fruitful researches on estimation in errors-in-variables models under low dimensional situation; see, e.g., \cite{bickel1987efficient,carroll2006measurement} and references therein. Estimation bias is generally corrected by imputation or using maximum likelihood estimation methods such as the expectation maximization (EM) algorithm in classical low-dimensional framework. Nevertheless, the method of imputation may change original data and inaccurate estimates of real data may introduce new noise, leading to more complicated problems. The EM algorithm may face the challenge of a huge amount of computation and a slow convergence rate. Moreover, due to the potential nonconvexity of objective functions in errors-in-variables regression, the EM algorithm may terminate in undesirable local optima. Under high-dimensional scenarios, authors in \cite{sorensen2015measurement} have pointed out that one can only get misleading results when the method for clean data is applied naively to the noisy data. Meanwhile, due to the huge amount of data, measurement errors may increase exponentially, especially for the case of missing data. This phenomenon makes the limitations of classical methods more prominent in face of high-dimensional data. A more practical method is to perform error-corrected estimation on the promise of keeping the original data information unchanged as much as possible.

Recently, many researchers have considered estimation in high-dimensional errors-in-variables regression using the original data. In linear errors-in-variables models, for example, Loh and Wainwright proposed nonconvex regularized estimator via substituting the unobserved matrices involved in the ordinary least squares loss function with unbiased surrogates, and established statistical errors for global and stationary solutions \cite{loh2012high,loh2015regularized}. To overcome the nonconvexity, Datta and Zou defined the nearest positive semi-definite matrix and developed the convex conditional Lasso (CoCoLasso) which enjoys the benefits of convex optimization and possesses nice estimation accuracy simultaneously \cite{datta2017cocolasso}. Rosenbaum and Tsybakov proposed a modified form of the Dantzig selector \cite{candes2007the}, called matrix uncertainty selector (MUS) for variable selection \cite{rosenbaum2010sparse}. Further development of MUS included modification to achieve statistical consistency, and generalization to deal with the cases of unbounded and dependent measurement errors as well as generalized linear models \cite{belloni2016an,belloni2017linear,rosenbaum2013improved,sorensen2018covariate}. Li et al. investigated a general nonconvex regularized $M$-estimator, which can be applied to errors-in-variables sparse linear regression, and analysed the statistical and computational properties \cite{li2020sparse}. Li et al. proposed a corrected decorrelated score test and a score type estimator \cite{li2021inference}. Other methods fall out the category of regularization methods. For instance, \cite{chen2013noisy} modified the orthogonal matching pursuit algorithm for variable selection in errors-in-variables linear regression; the measurement error boosting (MEBoost) algorithm \cite{brown2019meboost} was based on the idea of classical estimation equation and implemented error-corrected variable selection at every iterative path. 

In multi-response errors-in-variables models, Wu et al. developed a new methodology called convex conditioned sequential sparse learning (COSS) that combines the power of the sequential sparse factor regression and the nearest positive semi-definite matrix projection, thus possesses the benefits of stepwise scalability and convexity in large-scale association analyses \cite{wu2020scalable}. Li et al. proposed a nonconvex error-corrected estimator and established the statistical consistency and algorithmic linear convergence rate for global solutions of the estimator based on nuclear norm regularization \cite{li2023Lowrank}. 

However, there is still little attention paying to the more general and widely-used matrix regression model under the errors-in-variables framework. The matrix regression model, as a generic and unified observation model, includes many different models such as the multi-response model, matrix compressed sensing, matrix completion and so on. Our work here aims to deal with the estimation problem in errors-in-variables matrix regression. A unified estimation framework based on nonconvex spectral regularization is proposed to reduce the bias due to measurement errors. Statistical and computational guarantees are then established. The key to ensuring the success of low-rank recovery relies on suitable regularity conditions on the nonconvex loss function and the regularizer, while conditions used in linear regression cannot be used to analyse matrix regression \cite{Cands2009ThePO}. 



To date, however, an open question is whether or not appropriate conditions holds for errors-in-variables matrix regression. In handling this question, the contributions of this paper are as follows. First, we propose a specialized form of restricted strong convexity (RSC) and restricted strong smoothness (RSM) on the nonconvex loss function and proved that these conditions hold for errors-in-variables matrix regression with overwhelming probability, by virtue of nontrivial matrix analysis on concentration inequalities and random matrix theory (cf. Propositions \ref{prop-add}--\ref{prop-mis}). Second, in the statistical aspect, we provide
recovery bounds for any stationary point of the nonconvex optimization problem to obtain statistical consistency (cf. Theorem \ref{thm-stat}). This recovery bound does not rely on any specific algorithms, and thus any numerical procedure can recover the true low-rank matrix as long as it converges to a stationary point. Last but not least, in the statistical aspect, we modified the proximal gradient method \cite{nesterov2007gradient} to solve the nonconvex optimization problem and proved the linear convergence to a global solution (cf. Theorem \ref{thm-algo}).


The remainder of this article is organized as follows. In Sect. \ref{sec-prob}, we propose a general nonconvex  estimator based on spectral regularization. Some regularity conditions are imposed on the loss function and the regularizer facilitate the analysis. In Sect. \ref{sec-main}, we establish our main results on statistical recovery bounds and computational convergence rates. In Sect. \ref{sec-conse}, probabilistic consequences on the regularity conditions for specific errors-in-variables models are obtained. In Sect. \ref{sec-num}, several numerical experiments are performed to demonstrate theoretical results. Conclusions and future work are discussed in Sect. \ref{sec-con}. Technical proofs are deferred to the Supplementary Material.

We end this section by introducing useful notations. For a vector $\beta\in \R^d$ and an index set $J\subseteq \{1,2,\dots,d\}$, we use $\beta_J$ to denote the vector in which $(\beta_J)_i=\beta_i$ for $i\in J$ and zero elsewhere, $|J|$ to denote the cardinality of $J$, and $J^c=\{1,2,\dots,d\}\setminus J$ to denote the complement of $J$. For $d\geq 1$, let $\I_d$ stand for the $d\times d$ identity matrix. For a matrix $X\in \R^{d_1\times d_2}$, let $X_{ij}\ (i=1,\dots,d_1,j=1,2,\cdots,d_2)$ denote its $ij$-th entry, $X_{i\cdot}\ (i=1,\dots,d_1)$ denote its $i$-th row, $X_{\cdot j}\ (j=1,2,\cdots,d_2)$ denote its $j$-th column, and vec$(X)\in \R^{d_1d_2}$ to denote its vectorized form. When $X$ is a square matrix, i.e., $d_1=d_2$, we use diag$(X)$ stand for the diagonal matrix with its diagonal elements equal to $X_{11},X_{22},\cdots,X_{d_1d_1}$. We write $\lambda_{\text{min}}(X)$ and $\lambda_{\text{max}}(X)$ to denote the minimal and maximum eigenvalues of a matrix $X$, respectively. For a matrix $\Theta\in \R^{d_1\times d_2}$, define $d=\min\{d_1,d_2\}$, and denote its singular values in decreasing order by $\sigma_1(\Theta)\geq \sigma_2(\Theta)\geq \cdots \sigma_d(\Theta)\geq 0$. We use $\normm{\cdot}$ to denote different types of matrix norms based on singular values, including the nuclear norm $\normm{\Theta}_*=\sum_{j=1}^{d}\sigma_j(\Theta)$, the spectral or operator norm $\normm{\Theta}_{\text{op}}=\sigma_1(\Theta)$, and the Frobenius norm $\normm{\Theta}_\text{F}=\sqrt{\text{trace}(\Theta^\top\Theta)}=\sqrt{\sum_{j=1}^{d}\sigma_j^2(\Theta)}$. All vectors are column vectors following classical mathematical convention. For a pair of matrices $\Theta$ and $\Gamma$ with equal dimensions, we let $\inm{\Theta}{\Gamma}=\text{trace}(\Theta^\top \Gamma)$ denote the trace inner product on matrix space. For a function $f:\R^d\to \R$, $\nabla f$ is used to denote the gradient when $f$ is differentiable, and $\partial f$ is used to denote the subdifferential that consists of all subgradients when $f$ is nondifferentiable but convex.

\section{Problem setup}\label{sec-prob}

In this section, we first propose a general nonconvex error-corrected estimator based on nonconvex spectral regularization. Then some regularity conditions to guarantee the statistical and computational properties are also given in detail.

\subsection{General nonconvex estimator}

In this paper, we are mainly interested in the high-dimensional estimation scenario where the number of unknown matrix elements $d_1\times d_2$ can be possibly much larger than the number of observations $N$. From the theoretical aspect, it has been already known that one cannot achieve consistent estimation under this high-dimensional setting unless that the model space is endowed with additional structures, such as low-rankness in matrix estimation problems. Empirical evidence has also demonstrated that low-rank matrices always arise in real applications, such as collaborative filtering \cite{sagan2021lowrank} and multi-task learning \cite{wu2019joint}. 
In the following, we shall impose the low-rank constraint on the parameter space. For a matrix $\Theta\in \R^{d_1\times d_2}$, let $d=\min\{d_1,d_2\}$ and we use $\sigma(\Theta)$ to represent the vector which is formed by the singular values of $\Theta$ in decreasing order, that is, $\sigma(\Theta)=(\sigma_1(\Theta),\sigma_2(\Theta),\cdots,\sigma_d(\Theta))\top$ and $\sigma_1(\Theta)\geq \sigma_2(\Theta)\geq \cdots \geq \sigma_r(\Theta)\geq \sigma_{r+1}(\Theta)\geq \cdots \sigma_{d}(\Theta)$. Specifically, the true parameter $\Theta^*\in \R^{d_1\times d_2}$ is assumed to be of either exact low-rank, that is, the rank of $\Theta^*$ is far less than $d_1\times d_2$, or near low-rank, which refers to the case that $\Theta^*$ can be well approximated by an exact low-rank matrix. The matrix $\ell_q$-ball is used to measure the degree of low-rank, which is defined as follows, for $q\in [0,1]$, and a radius $R_q>0$,
\begin{equation}\label{eq-lqball}
\B_q(R_q):=\{\Theta\in \R^{d_1\times d_2}\big|\sum_{i=1}^{d}|\sigma_i(\Theta)|^q\leq R_q\}.
\end{equation}
It is worth noting that these balls are not true balls when $q\in[0,1)$ due to the nonconvexity. When $q=0$, the matrix $\ell_0$-ball refers to the exact low-rank assumption, i.e., the rank of a matrix is at most $R_0$; while when $q\in (0,1]$, the matrix $\ell_q$-ball refers to the near low-rank assumption that enforces some decay rate on the ordered singular values of the matrix $\Theta\in \B_q(R_q)$. In the following, unless otherwise specified, we assume that the true parameter $\Theta^*\in \B_q(R_q)$ for a fixed value $q\in [0,1]$.

Let us then cast the matrix estimation problem into the framework of the regularized $M$-estimation problem. For a random variable $B:\mathcal{S} \to \mathcal{B}$ defined on the probability space $(\mathcal{S},\mathcal{F},\mathbb{P})$, with $\mathbb{P}$ belonging to a parameterized set $\mathcal{P}=\{\mathbb{P}_\Theta\big| \Theta\in \Omega \subseteq \R^{d_1\times d_2}\}$. Assume that one collects a sample of $n$ independent and identically distributed (i.i.d.) observations of this random variable $B$, written as $B_1^N:=(B_1,B_2,\cdots,B_N)$. The goal is then to estimate a true parameter $\Theta^*\in \Omega$ such that the observed data $B_1^N$ is generated by a true probability distribution $\mathbb{P}_{\Theta^*}\in \mathcal{P}$. To this end, we introduce a loss function $\loss_N:\R^{d_1\times d_2}\times \mathcal{B}^N\to \R_+$, with its value $\loss_N(\Theta;B_1^N)$ measuring the fitness between any parameter $\Theta\in \Omega$ and the collected data $B_1^N\in \mathcal{B}^N$. Recalling the low-rank constraint imposed on the parameter space, we shall consider the following regularized $M$-estimator
\begin{equation}\label{eq-esti}
\hat{\Theta} \in \argmin_{\Theta\in \Omega\subseteq \R^{d_1\times d_2}}\{\loss_N(\Theta;B_1^N)+\regu_\lambda(\Theta)\},
\end{equation}
where $\lambda>0$ is a regularization parameter providing a tradeoff between data fitness and low-rankness, and $\regu_\lambda:\R^{d_1\times d_2}\to \R$ is a regularizer depending on $\lambda$ imposing sparsity on the singular values of the estimator $\hat{\Theta}$, and thus low-rankness of $\hat{\Theta}$. 

The regularizer is written as $\regu_\lambda(\Theta)=\sum_{j=1}^{d}p_\lambda(\sigma_j(\Theta))$, with $p_\lambda:\R \to \R$ being a univariate function of the singular values of a matrix $\Theta$. Furthermore, the univariate function $p_\lambda(\cdot)$ is assumed to be decomposed as $p_\lambda(\cdot)=q_\lambda(\cdot)+\lambda|\cdot|$, where $q_\lambda(\cdot)$ is a concave component and $|\cdot|$ is the absolute value function. Therefore, one sees that the regularizer can be decomposed as $\regu_\lambda(\cdot)=\Q_\lambda(\cdot)+\lambda\normm{\cdot}_*$, where $\Q_\lambda(\cdot)$ is the concave component given by 
\begin{equation}\label{eq-qlambda}
\Q_\lambda(\cdot)=\sum_{j=1}^{d}q_\lambda(\sigma_j(\cdot)),
\end{equation} 
and $\normm{\cdot}_*$ is the nuclear norm function. 

In the following analysis, the loss function $\loss_N(\cdot;B_1^n)$ is not required to be convex, but only differentiable. The regularizer $\regu_\lambda$ can also be nonconvex. Due to the potential nonconvexity, the feasible region is specialized as a convex set as follows
\begin{equation}\label{eq-feasi}
\Omega:=\{\Theta\in \R^{d_1\times d_2}\big| \normm{\Theta}_*\leq \omega\},
\end{equation}
where $\omega>0$ must be chosen to ensure the feasibility of $\Theta^*$, i.e., $\Theta^*\in \Omega$. Any matrix $\Theta\in \Omega$ will also satisfy the side constraint $\|\Theta\|_*\leq \omega$. Then the existence of global solutions is guaranteed by Weierstrass extreme value theorem providing the continuity of the loss function $\loss_N(\cdot;B_1^n)$ and the regularizer $\regu_\lambda$. Hereinafter to simplify the notation, the shorthand $\loss_N(\cdot)$ for $\loss_N(\cdot;B_1^N)$ will be adopted. 

\subsection{Regularity conditions}

We first impose some regularity conditions on the empirical loss function $\loss_N$. Regularity conditions named RSC/RSM have been introduced to analyse linear/matrix regression to control the statistical error and guarantee nice algorithmic performance, and is applicable when the loss function is nonquadratic or nonconvex; see, e.g.,\cite{agarwal2012fast,loh2015regularized,negahban2011estimation}. When there exists no measurement error, researchers have shown that the RSC/RSM conditions are satisfied by a variety class of random matrices with high probability \cite{agarwal2012fast,negahban2011estimation}. 

However, it is still unknown whether or not a suitable form of RSC/RSM exists for errors-in-variables matrix regression. In this paper, we provide a positive answer for this question by proposing the following general RSC/RSM conditions. Verification for specific measurement error models involves probabilistic discussions under high-dimensional scaling and will be given in Section \ref{sec-conse}.

We begin with defining the first-order Taylor series expansion around a matrix $\Theta'$ in the direction of $\Theta$ as
\begin{equation}\label{taylor}
\T(\Theta,\Theta'):=\loss_N(\Theta)-\loss_N(\Theta')-\inm{\nabla\loss_N(\Theta')}{\Theta-\Theta'}.
\end{equation} 

The RSC condition takes two types of forms, one is used to control statistical errors for any stationary point; the other one is used for the analysis of algorithmic convergence rates together with the RSM condition. See Definitions \ref{asup-sta-rsc}-\ref{asup-rsm} below.


\begin{Definition}\label{asup-sta-rsc}
The function $\loss_N$ is said to satisfy the statistical restricted strong convexity with parameters $\alpha_1>0$ and $\tau_1>0$ if
\begin{equation}\label{eq-sta-rsc}
\inm{\nabla\loss_N(\Theta^*+\Delta)-\nabla\loss_N(\Theta^*)}{\Delta}\geq \alpha_1\normm{\Delta}_\text{F}^2-\tau_1\normm{\Delta}_*^2,\quad \forall\  \Delta\in \R^{d_1\times d_2}.
\end{equation}
\end{Definition}

\begin{Definition}\label{asup-alg-rsc}
The function $\loss_N$ is said to satisfy the algorithmic restricted strong convexity with parameters $\alpha_2>0$ and $\tau_2>0$ if
\begin{equation}\label{eq-alg-rsc}
\T(\Theta,\Theta')\geq \alpha_2\normm{\Theta-\Theta'}_\text{F}^2-\tau_2\normm{\Theta-\Theta'}_*^2,\quad \forall\  \Theta,\Theta'\in \R^{d_1\times d_2}.
\end{equation}
\end{Definition}

\begin{Definition}\label{asup-rsm}
The function $\loss_N$ is said to satisfy the restricted strong smoothness with parameters $\alpha_3>0$ and $\tau_3>0$ if
\begin{equation}\label{eq-alg-rsm}
\T(\Theta,\Theta')\leq \alpha_3\normm{\Theta-\Theta'}_\text{F}^2+\tau_3\normm{\Theta-\Theta'}_*^2,\quad \forall\ \Theta,\Theta'\in R^{d_1\times d_2}.
\end{equation}
\end{Definition}


Then several regularity conditions are imposed on the nonconvex regularizer $\regu_\lambda(\cdot)$ in terms of the univariate functions $p_\lambda(\cdot)$ and $q_\lambda(\cdot)$.

\begin{Assumption}\mbox{}\par\label{asup-regu}
\begin{enumerate}[\rm(i)]
\item $p_\lambda$ satisfies $p_\lambda(0)=0$ and is symmetric around zero, that is, $p_\lambda(t)=p_\lambda(-t)$ for all $t\in \R$.
\item For $t>0$, the function $t\mapsto \frac{p_\lambda(t)}{t}$ is nonincreasing in $t$;
\item $p_\lambda$ is differentiable for all $t\neq 0$ and subdifferentiable at $t=0$, with $\lim\limits_{t\to 0^+}p'_\lambda(t)=\lambda$.
\item On the nonnegative real line, $p_\lambda$ is nondecreasing and concave.
\item For $t>t'$, there exists a positive constant $\mu\geq 0$ such that
    \begin{equation}\label{cond-qlambda}
    q'_\lambda(t)-q'_\lambda(t')\geq -\mu(t-t').
    \end{equation}
\end{enumerate}
\end{Assumption}
Note that condition (ii) implies that on the nonnegative line, the function $p_\lambda$ is subadditive. It is easy to check that the nuclear norm satisfies Assumption \ref{asup-regu}. Other nonconvex regularizers such as SCAD and MCP are also contained in our framework. Precisely, fixing $a>2$ and $b>0$, the function $p_\lambda$ for the SCAD regularizer is defined as 
\begin{equation*}
p_\lambda(t):=\left\{
\begin{array}{l}
\lambda|t|,\ \ \text{if}\ \  |t|\leq \lambda,\\
-\frac{t^2-2a\lambda|t|+\lambda^2}{2(a-1)},\ \  \text{if}\ \ \lambda<|t|\leq a\lambda,\\
\frac{(a+1)\lambda^2}{2},\ \ \text{if}\ \ |t|>a\lambda,
\end{array}
\right.
\end{equation*}
and the function $p_\lambda$ for the MCP regularizer is defined as
\begin{equation*}
p_\lambda(t):=\left\{
\begin{array}{l}
\lambda|t|-\frac{t^2}{2b},\ \ \text{if}\ \  |t|\leq b\lambda,\\
\frac{b\lambda^2}{2},\ \ \text{if}\ \ |t|>b\lambda.
\end{array}
\right.
\end{equation*}
Then 
\begin{equation}\label{SCAD-q-2}
q_\lambda(t)=\left\{
\begin{array}{l}
0,\ \ \text{if}\ \  |t|\leq \lambda,\\
-\frac{t^2-2\lambda|t|+\lambda^2}{(2(a-1)},\ \  \text{if}\ \ \lambda<|t|\leq a\lambda,\\
\frac{(a+1)\lambda^2}{2}-\lambda|t|,\ \ \text{if}\ \ |t|>a\lambda,
\end{array}
\right.
\end{equation}
for SCAD with $\mu=\frac{1}{a-1}$, and
\begin{equation}\label{MCP-q-2}
q_\lambda(t)=\left\{
\begin{array}{l}
-\frac{t^2}{2b},\ \ \text{if}\ \  |t|\leq b\lambda,\\
\frac{b\lambda^2}{2}-\lambda|t|,\ \  \text{if}\ \ |t|> b\lambda,
\end{array}
\right.
\end{equation}
for MCP with $\mu=\frac{1}{b}$, respectively, for condition (v). 

At the end of this section, we provide three technical lemmas, which tell us some general properties of the nonconvex regularizer $\regu_\lambda$ and the concave component $\Q_\lambda$. The first lemma is from \cite[Theorem 1]{Rotfeld1967RemarksOT} and \cite[Theorem 1]{Yue2016API} concerning about inequalities of matrix singular values with the proof omitted.

\begin{Lemma}\label{lem-tri}
Let $\Theta,\Theta'\in \R^{d_1\times d_2}$ be two given matrices and $d=\min\{d_1,d_2\}$. Let $f:\R_+\to \R_+$ be a concave increasing function satisfying $f(0)=0$. Then we have that
\begin{align}
\sum_{j=1}^{d}f(\sigma_j(\Theta+\Theta'))&\leq \sum_{j=1}^{d}f(\sigma_j(\Theta))+\sum_{j=1}^{d}f(\sigma_j(\Theta')),\label{eq-tri+}\\
\sum_{j=1}^{d}f(\sigma_j(\Theta-\Theta'))&\geq \sum_{j=1}^{d}f(\sigma_j(\Theta))-\sum_{j=1}^{d}f(\sigma_j(\Theta')).\label{eq-tri-}
\end{align}
\end{Lemma}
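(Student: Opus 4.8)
The plan is to reduce both displayed inequalities to a single subadditivity statement and then to prove that statement by means of the integral representation of a concave function. First, I would note that \eqref{eq-tri+} and \eqref{eq-tri-} are equivalent: replacing $\Theta$ by $\Theta-\Theta'$ in \eqref{eq-tri+} and rearranging gives exactly \eqref{eq-tri-}, so it suffices to prove the subadditivity bound \eqref{eq-tri+} (a form of Rotfel'd's inequality). The key structural fact I would invoke is that every nonnegative concave increasing $f$ on $\R_+$ with $f(0)=0$ can be written as
\[
f(t)=ct+\int_{(0,\infty)}\min(t,s)\,d\nu(s),\qquad t\ge 0,
\]
where $c:=\lim_{t\to\infty}f'(t)\ge 0$ and $\nu$ is the nonnegative Lebesgue--Stieltjes measure associated with the nonincreasing nonnegative function $s\mapsto f'(s)-c$; this follows from $f(t)-ct=\int_0^t(f'(u)-c)\,du$ together with Tonelli's theorem. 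Applying this to the singular values of a matrix $M\in\R^{d_1\times d_2}$ yields
\[
\sum_{j=1}^{d}f(\sigma_j(M))=c\,\normm{M}_*+\int_{(0,\infty)}h_s(M)\,d\nu(s),\qquad h_s(M):=\sum_{j=1}^{d}\min(\sigma_j(M),s),
\]
so \eqref{eq-tri+} would reduce to two ingredients: the triangle inequality $\normm{\Theta+\Theta'}_*\le\normm{\Theta}_*+\normm{\Theta'}_*$ (for the linear term) and the subadditivity $h_s(\Theta+\Theta')\le h_s(\Theta)+h_s(\Theta')$ for each fixed $s>0$.

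To handle $h_s$, I would establish the variational identity $h_s(M)=\min\{\normm{M-R}_*+s\,\mathrm{rank}(R):R\in\R^{d_1\times d_2}\}$. The bound ``$\le$'' comes from choosing $R$ to be the truncated singular value decomposition of $M$ that retains the singular values exceeding $s$: with $k=\#\{j:\sigma_j(M)>s\}$ one gets $\normm{M-R}_*+sk=\sum_{j>k}\sigma_j(M)+sk=h_s(M)$. For ``$\ge$'', given any $R$ of rank at most $k$, Weyl's singular value inequality $\sigma_{i+k}(M)\le\sigma_i(M-R)+\sigma_{k+1}(R)=\sigma_i(M-R)$ yields $\normm{M-R}_*\ge\sum_{i=1}^{d-k}\sigma_i(M-R)\ge\sum_{j>k}\sigma_j(M)$, whence $\normm{M-R}_*+sk\ge\sum_{j>k}\sigma_j(M)+sk\ge h_s(M)$. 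Once this identity is available, subadditivity of $h_s$ is immediate: choosing optimal decompositions $\Theta=R_\Theta+(\Theta-R_\Theta)$ and $\Theta'=R_{\Theta'}+(\Theta'-R_{\Theta'})$ and using $\mathrm{rank}(R_\Theta+R_{\Theta'})\le\mathrm{rank}(R_\Theta)+\mathrm{rank}(R_{\Theta'})$ together with the nuclear-norm triangle inequality,
\[
h_s(\Theta+\Theta')\le\normm{(\Theta-R_\Theta)+(\Theta'-R_{\Theta'})}_*+s\,\mathrm{rank}(R_\Theta+R_{\Theta'})\le h_s(\Theta)+h_s(\Theta').
\]

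To conclude, I would integrate $\normm{\Theta+\Theta'}_*\le\normm{\Theta}_*+\normm{\Theta'}_*$ and $h_s(\Theta+\Theta')\le h_s(\Theta)+h_s(\Theta')$ against $c$ and $d\nu(s)$ and recombine via the displayed formula for $\sum_j f(\sigma_j(\cdot))$ to obtain \eqref{eq-tri+}, and then \eqref{eq-tri-} via the substitution from the first paragraph. I expect the main obstacle to be the variational identity for $h_s$ --- applying Weyl's inequality in exactly the right form and treating the degenerate cases such as $k\ge d$; the representation of $f$ is routine but must be phrased carefully so as to include functions like $f(t)=\sqrt{t}$ whose derivative is unbounded near the origin. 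An alternative, if one prefers to rely on the literature, is to pass to Hermitian matrices via the dilation $\widehat{A}=\bigl(\begin{smallmatrix}0&A\\ A^\top&0\end{smallmatrix}\bigr)$, whose nonzero eigenvalues are $\pm\sigma_j(A)$ so that $f(0)=0$ turns $\sum_j f(\sigma_j(A))$ into $\tfrac12\,\mathrm{Tr}\,f(|\widehat A|)$, and then invoke the known trace subadditivity inequality for $\mathrm{Tr}\,f(|X+Y|)$.
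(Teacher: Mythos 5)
Your proposal is correct. Note, however, that the paper itself offers no proof of this lemma: it is stated as imported from Rotfel'd (1967) and Yue--Pang (2016) ``with the proof omitted,'' so there is no in-paper argument to compare against. What you have written is a valid self-contained derivation of the cited result, and it is essentially the standard modern proof of Rotfel'd's subadditivity theorem: the reduction of \eqref{eq-tri-} to \eqref{eq-tri+} by substituting $\Theta\mapsto\Theta-\Theta'$ is exact; the representation $f(t)=ct+\int_{(0,\infty)}\min(t,s)\,d\nu(s)$ with $d\nu=-df'_+$ is legitimate for concave increasing $f$ with $f(0)=0$ (using the right derivative and Tonelli, which also absorbs the unbounded-derivative case such as $\sqrt{t}$); and the variational identity $\sum_j\min(\sigma_j(M),s)=\min_R\{\normm{M-R}_*+s\,\mathrm{rank}(R)\}$ is correctly established in both directions, with the Weyl step $\sigma_{i+k}(M)\le\sigma_i(M-R)$ applied properly and the degenerate case $\mathrm{rank}(R)\ge d$ handled trivially by $sd\ge h_s(M)$. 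Subadditivity of each $h_s$ and of the nuclear norm then integrates to \eqref{eq-tri+} as you describe. Your alternative route through the Hermitian dilation $\widehat{A}$ and the trace inequality for $\mathrm{Tr}\,f(|X+Y|)$ is also sound (the hypothesis $f(0)=0$ is exactly what kills the padding zeros when $d_1\neq d_2$) and is closer in spirit to the references the paper actually cites.
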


\begin{Lemma}\label{lem-regu}
Suppose that $\regu_\lambda$ satisfy Assumption \ref{asup-regu}. Then for any $\Theta,\Theta'\in \R^{d_1\times d_2}$, one has that
\begin{align}
\regu_\lambda(\Theta+\Theta')&\leq \regu_\lambda(\Theta)+\regu_\lambda(\Theta'),\label{eq-regu+}\\
\regu_\lambda(\Theta-\Theta')&\geq \regu_\lambda(\Theta)-\regu_\lambda(\Theta').\label{eq-regu-}
\end{align}
\end{Lemma}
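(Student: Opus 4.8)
The plan is to obtain both inequalities directly from Lemma \ref{lem-tri} applied to the univariate function $p_\lambda$ restricted to the nonnegative half-line. First I would record that $\regu_\lambda(\Theta)=\sum_{j=1}^{d}p_\lambda(\sigma_j(\Theta))$, and that by parts (i) and (iv) of Assumption \ref{asup-regu} the map $f:=p_\lambda|_{\R_+}$ satisfies $f(0)=0$, is nondecreasing, and is concave on $\R_+$; since it is nondecreasing with $f(0)=0$ it is also nonnegative, so $f:\R_+\to\R_+$. Thus $f$ is of exactly the type to which Lemma \ref{lem-tri} applies, except for one mismatch discussed below.

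That mismatch is that the cited statement of Lemma \ref{lem-tri} asks for a strictly increasing $f$, whereas $p_\lambda$ is only nondecreasing (indeed eventually constant for SCAD and MCP). I would bridge this with a routine perturbation: for $\epsilon>0$ set $f_\epsilon(t):=p_\lambda(t)+\epsilon t$, which is strictly increasing, concave, and maps $\R_+\to\R_+$ with $f_\epsilon(0)=0$. Applying \eqref{eq-tri+} to $f_\epsilon$ gives $\sum_{j}f_\epsilon(\sigma_j(\Theta+\Theta'))\le\sum_{j}f_\epsilon(\sigma_j(\Theta))+\sum_{j}f_\epsilon(\sigma_j(\Theta'))$; since the added terms sum to $\epsilon\normm{\cdot}_*$, which is finite, letting $\epsilon\downarrow 0$ and using continuity of $p_\lambda$ yields \eqref{eq-tri+} for $p_\lambda$ itself, i.e.\ \eqref{eq-regu+}: $\regu_\lambda(\Theta+\Theta')\le\regu_\lambda(\Theta)+\regu_\lambda(\Theta')$.

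For \eqref{eq-regu-} I would simply substitute $\Theta\mapsto\Theta-\Theta'$ into the subadditivity just established: writing $\Theta=(\Theta-\Theta')+\Theta'$ gives $\regu_\lambda(\Theta)\le\regu_\lambda(\Theta-\Theta')+\regu_\lambda(\Theta')$, and rearranging produces \eqref{eq-regu-}. (Equivalently, one invokes \eqref{eq-tri-} with $f=p_\lambda$, which is derived from \eqref{eq-tri+} in the same fashion.) No probabilistic or random-matrix input is needed; the whole argument rests on Lemma \ref{lem-tri} plus the elementary structural properties in Assumption \ref{asup-regu}.

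The only genuinely delicate point is the passage from ``strictly increasing'' to ``nondecreasing'' when invoking Lemma \ref{lem-tri}; once the approximation $f_\epsilon$ is in place, the rest is bookkeeping. I would also note in passing that the index range is harmless: singular values beyond the rank vanish and $p_\lambda(0)=0$, so padding the sums with zeros changes nothing, and $\sigma_j(\Theta)\ge 0$ makes the restriction of $p_\lambda$ to $\R_+$ the only relevant part.
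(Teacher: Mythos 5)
Your proposal is correct and follows essentially the same route as the paper: both reduce the claim to Lemma \ref{lem-tri} applied to $f=p_\lambda$ on $\R_+$ using Assumption \ref{asup-regu}(i) and (iv), and then read off the two inequalities from the definition of $\regu_\lambda$. Your $\epsilon$-perturbation to handle ``nondecreasing'' versus ``increasing'' is a careful extra step the paper silently skips (and is in fact unnecessary, since a nonnegative concave function on $\R_+$ with $f(0)=0$ is automatically nondecreasing and the cited results only need concavity and nonnegativity), but it does no harm.
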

\begin{proof}
Since the singular values of a matrix is always nonnegative, the univariate function $p_\lambda$ actually  satisfies $p_\lambda: \R_+\to \R_+$. Then by Assumption \ref{asup-regu} (i) and (iv), Lemma \ref{lem-tri} is applicable to concluding that 
\begin{align*}
\sum_{j=1}^{d}p_\lambda(\sigma_j(\Theta+\Theta'))&\leq \sum_{j=1}^{d}p_\lambda(\sigma_j(\Theta))+\sum_{j=1}^{d}p_\lambda(\sigma_j(\Theta')),\\
\sum_{j=1}^{d}p_\lambda(\sigma_j(\Theta-\Theta'))&\geq \sum_{j=1}^{d}p_\lambda(\sigma_j(\Theta))-\sum_{j=1}^{d}p_\lambda(\sigma_j(\Theta')).
\end{align*}
The conclusion then follows directly from the definition of the regularizer $\regu_\lambda$.
\end{proof}

\begin{Lemma}\label{lem-qlambda}
Let $\Q_\lambda$ be defined in \eqref{eq-qlambda}. Then for any $\Theta,\Theta'\in \R^{d_1\times d_2}$, the following relations are true:
\begin{subequations}
\begin{align}
&\inm{\nabla\Q_\lambda(\Theta)-\nabla\Q_\lambda(\Theta')}{\Theta-\Theta'} \geq -\mu\normm{\Theta-\Theta'}_\text{F}^2,\label{lem-qlambda-11}\\
&\inm{\nabla\Q_\lambda(\Theta)-\nabla\Q_\lambda(\Theta')}{\Theta-\Theta'}\leq 0, \label{lem-qlambda-12}\\
&\Q_\lambda(\Theta)\geq \Q_\lambda(\Theta')+\inm{\nabla\Q_\lambda(\Theta')}{\Theta-\Theta'}- \frac{\mu}{2}\normm{\Theta-\Theta'}_\text{F}^2,\label{lem-qlambda-13}\\
&\Q_\lambda(\Theta)\leq \Q_\lambda(\Theta')+\inm{\nabla\Q_\lambda(\Theta')}{\Theta-\Theta'}.\label{lem-qlambda-14}
\end{align}
\end{subequations}
\end{Lemma}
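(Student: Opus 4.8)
The plan is to deduce all four relations from two structural facts: (a) $\Q_\lambda$ is concave on $\R^{d_1\times d_2}$, and (b) the function $G_\mu(\Theta):=\Q_\lambda(\Theta)+\frac{\mu}{2}\normm{\Theta}_\text{F}^2$ is convex on $\R^{d_1\times d_2}$. Granting these, \eqref{lem-qlambda-12} is the anti-monotonicity of the gradient of the concave function $\Q_\lambda$ and \eqref{lem-qlambda-14} is its supporting-hyperplane inequality. For \eqref{lem-qlambda-11}, the monotonicity of the operator $\nabla G_\mu(\cdot)=\nabla\Q_\lambda(\cdot)+\mu(\cdot)$ yields $\inm{\nabla\Q_\lambda(\Theta)-\nabla\Q_\lambda(\Theta')}{\Theta-\Theta'}+\mu\normm{\Theta-\Theta'}_\text{F}^2\geq 0$. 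For \eqref{lem-qlambda-13}, the first-order convexity inequality $G_\mu(\Theta)\geq G_\mu(\Theta')+\inm{\nabla G_\mu(\Theta')}{\Theta-\Theta'}$, after substituting $\nabla G_\mu(\Theta')=\nabla\Q_\lambda(\Theta')+\mu\Theta'$ and using the identity $\frac{\mu}{2}\normm{\Theta'}_\text{F}^2-\frac{\mu}{2}\normm{\Theta}_\text{F}^2+\mu\inm{\Theta'}{\Theta-\Theta'}=-\frac{\mu}{2}\normm{\Theta-\Theta'}_\text{F}^2$, collapses exactly to the claimed bound.

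To obtain (a) and (b) I would transfer convexity from the scalar level through the Lewis-type characterization of unitarily invariant matrix functions: if $f:\R^d\to\R$ is absolutely symmetric (invariant under permutations and sign changes of its arguments), then $\Theta\mapsto f(\sigma(\Theta))$ is convex if and only if $f$ is convex, and it is differentiable at $\Theta$ whenever $f$ is differentiable at $\sigma(\Theta)$. For (a): Assumption \ref{asup-regu}(i) makes $q_\lambda=p_\lambda-\lambda|\cdot|$ even with $q_\lambda(0)=0$; Assumption \ref{asup-regu}(iv) makes $q_\lambda$ concave on $\R_+$; and since $p'_\lambda$ is nonincreasing with $\lim_{t\to0^+}p'_\lambda(t)=\lambda$ by Assumption \ref{asup-regu}(iii)--(iv), one has $q'_\lambda(t)=p'_\lambda(t)-\lambda\leq 0$ on $\R_+$, i.e.\ $q_\lambda$ is nonincreasing there. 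Hence $h:=-q_\lambda|_{\R_+}$ is convex, nondecreasing and $h(0)=0$, so its even extension $\tilde h(t):=h(|t|)$ is convex and absolutely symmetric; since singular values are nonnegative, $\sum_{j=1}^{d}\tilde h(\sigma_j(\Theta))=-\Q_\lambda(\Theta)$, so the characterization gives that $-\Q_\lambda$ is convex, i.e.\ $\Q_\lambda$ is concave. For (b): set $g_\mu(t):=q_\lambda(t)+\frac{\mu}{2}t^2$, which is even, and observe that \eqref{cond-qlambda} of Assumption \ref{asup-regu}(v) states precisely that $g'_\mu$ is nondecreasing, hence $g_\mu$ is convex on $\R$; the characterization applied to the absolutely symmetric convex function $g_\mu$ gives convexity of $\Theta\mapsto\sum_{j=1}^{d}g_\mu(\sigma_j(\Theta))=\Q_\lambda(\Theta)+\frac{\mu}{2}\normm{\Theta}_\text{F}^2$. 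Differentiability of $\Q_\lambda$, implicit in the statement via $\nabla\Q_\lambda$, likewise follows once one checks $q_\lambda\in C^1(\R)$, in particular $q'_\lambda(0)=0$ from Assumption \ref{asup-regu}(iii) and evenness.

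I expect the concavity claim (a) to be the main obstacle: in contrast with the convex side, mere concavity of the scalar $q_\lambda$ does not propagate to the spectral function $\sum_j q_\lambda(\sigma_j(\cdot))$, and one genuinely needs $q_\lambda$ to be \emph{monotone} on $\R_+$ — this is why Assumption \ref{asup-regu}(iii)--(iv), and not concavity alone, is invoked — so that the even extension $h(|\cdot|)$ is convex and the Lewis-type result applies. A secondary point requiring care is that this result must be used in its singular-value (rectangular) form rather than its symmetric-eigenvalue form; after that, turning (a)--(b) into \eqref{lem-qlambda-11}--\eqref{lem-qlambda-14} is routine convex-analysis bookkeeping.
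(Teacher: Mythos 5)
Your proof is correct, but it takes a genuinely different route from the paper's. The paper works at the gradient level first: from Assumption \ref{asup-regu}(iv)--(v) it writes the scalar inequalities $-\mu(\sigma_j-\sigma'_j)^2\leq (q'_\lambda(\sigma_j)-q'_\lambda(\sigma'_j))(\sigma_j-\sigma'_j)\leq 0$ for each pair of ordered singular values, passes ``by the definitions of $D,D'$'' to the matrix inequalities \eqref{lem-qlambda-11}--\eqref{lem-qlambda-12}, and only then invokes Nesterov's first-order characterizations of convexity and Lipschitz-gradient smoothness to convert the gradient monotonicity of $-\Q_\lambda$ into \eqref{lem-qlambda-13}--\eqref{lem-qlambda-14}. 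You go the other way: you first establish the two function-level facts --- concavity of $\Q_\lambda$ and convexity of $\Q_\lambda+\frac{\mu}{2}\normm{\cdot}_\text{F}^2$ --- by transferring scalar convexity through Lewis's theorem on unitarily invariant spectral functions, and then read off all four inequalities by routine convex analysis. What your route buys is rigor precisely where the paper is weakest: the step from per-singular-value inequalities to $\inm{\nabla\Q_\lambda(\Theta)-\nabla\Q_\lambda(\Theta')}{\Theta-\Theta'}$ is not immediate, since the two gradients are expressed in different singular bases and the cross terms need a von Neumann-type or Lewis-type argument, which the paper leaves implicit; you also correctly flag that concavity of $q_\lambda$ alone does not propagate to the spectral sum and that monotonicity of $q_\lambda$ on $\R_+$ (via Assumption \ref{asup-regu}(iii)--(iv)) is the operative hypothesis. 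What the paper's route buys is brevity and the avoidance of the spectral-function machinery. One small point to tighten in your write-up: for the convexity of $\Theta\mapsto\sum_j g_\mu(\sigma_j(\Theta))$ you should note explicitly that $g_\mu(t)=q_\lambda(t)+\frac{\mu}{2}t^2$ is also nondecreasing on $\R_+$ (which follows from $q'_\lambda(0^+)=0$ and Assumption \ref{asup-regu}(v) with $t'\to 0^+$), since evenness plus convexity on $\R_+$ alone would not give convexity of the even extension on all of $\R$; with that observation the argument is complete.
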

\begin{proof}
For arbitrary matrices $\Theta,\Theta'\in \R^{d_1\times d_2}$, let $\sigma(\Theta),\sigma(\Theta')$ be the vectors of singular values of $\Theta,\Theta'$ in decreasing order, respectively, and $d=\min\{d_1,d_2\}$. For the sake of simplicity, we use $\sigma,\sigma'$ to denote $\sigma(\Theta),\sigma(\Theta')$, respectively. Then we have the singular value decompositions for $\Theta,\Theta'$ as follows:
\begin{equation*}
\begin{aligned}
\Theta&=UDV^\top,\\
\Theta'&=U'D'{V'}^\top,
\end{aligned}
\end{equation*}
where $D,D'\in \R^{d\times d}$ are diagonal with $D=\text{diag}(\sigma), D'=\text{diag}(\sigma')$. By Assumption \ref{asup-regu}(iv)-(v), one has for each pair of singular values of $\Theta,\Theta'$: ($\sigma_j,\sigma'_j$), $j=1,2,\cdots,d$, it holds that 
\begin{equation*}
-\mu(\sigma_j-\sigma'_j)^2\leq (q'_\lambda(\sigma_j)-q'_\lambda(\sigma'_j))(\sigma_j-\sigma'_j)\leq 0,
\end{equation*}
Then by the definitions of $D,D'$, one has that 
\begin{equation*}
-\mu\normm{\Theta-\Theta'}_\text{F}^2\leq \inm{\nabla\Q_\lambda(UDV^\top)-\nabla\Q_\lambda(U'D'{V'}^\top)}{\Theta-\Theta'}\leq 0,
\end{equation*}
from which \eqref{lem-qlambda-11} and \eqref{lem-qlambda-12} follows directly.
Then by \cite[Theorem 2.1.5 and Theorem 2.1.9]{nesterov2013introductory}, it follows from \eqref{lem-qlambda-11} and \eqref{lem-qlambda-12} that
the convex function $-\Q_\lambda$ satisfies
\begin{align*}
-\Q_\lambda(\Theta)&\leq -\Q_\lambda(\Theta')+\inm{\nabla(-\Q_\lambda(\Theta'))}{\Theta-\Theta'}+ \frac{\mu}{2}\normm{\Theta-\Theta'}_\text{F}^2,\\
-\Q_\lambda(\Theta)&\geq -\Q_\lambda(\Theta')+\inm{\nabla(-\Q_\lambda(\Theta'))}{\Theta-\Theta'},
\end{align*}
which respectively implies that the function $\Q_\lambda$ satisfies \eqref{lem-qlambda-13} and \eqref{lem-qlambda-14}.
The proof is complete.
\end{proof}

\section{Main results}\label{sec-main}

In this section, the main results on statistical guarantee on the recovery bound for the general nonconvex estimator \eqref{eq-esti} and computational guarantee on convergence rates for the proximal gradient algorithm. These results are deterministic in nature. Stochastic consequences for the errors-in-variables matrix regression model will be discussed in the next section. 

Before proceeding, we need some additional notations to facilitate the following analysis. First let $\obj(\Theta)=\loss_N(\Theta)+\regu_\lambda(\Theta)$ denote the objective function to be minimized. Recall that the regularizer can be decomposed as $\regu_\lambda(\Theta)=\Q_\lambda(\Theta)+\lambda\normm{\Theta}_*$, then it follows that $\obj(\Theta)=\loss_N(\Theta)+\Q_\lambda(\Theta)+\lambda\normm{\Theta}_*$. Donote $\bar{\loss}_N(\Theta)=\loss_N(\Theta)+\Q_\lambda(\Theta)$, then we have that $\obj(\Theta)=\bar{\loss}_N(\Theta)+\lambda\normm{\Theta}_*$. Through this decomposition, it is easy to see that the objective function is decomposed into a differentiable but possibly nonconvex function and a nonsmooth but convex function.

Note that the parameter matrix $\Theta^*$ has a singular value
decomposition of the form $\Theta^*=U^*D^*{V^*}^\top$, where $U^*\in \R^{d_1\times d}$ and $V^*\in \R^{d_2\times d}$
are orthonormal matrices with $d=\min\{d_1,d_2\}$, and without loss of generality, assume that $D$ is diagonal with singular values in nonincreasing order, i.e., $\sigma_1(\Theta^*)\geq \sigma_2(\Theta^*)\geq \cdots \sigma_d(\Theta^*)\geq 0$. For each integer $r\in \{1, 2,\cdots,d\}$, We use $U^r\in \R^{d_1\times r}$ and $V^r\in \R^{d_2\times r}$ to denote the sub-matrices consisting of left and right singular vectors indexed by the top $r$ largest
singular values of $\Theta^*$, respectively. Then we define the following two subspaces of $\R^{d_1\times d_2}$ associated with $\Theta^*$ as:
\begin{subequations}\label{eq-sub}
\begin{align}
\Aa(U^r,V^r)&:=\{\Delta\in \R^{d_1\times d_2}\big|\text{row}(\Delta)\subseteq \text{col}(V^r), \text{col}(\Delta)\subseteq \text{col}(U^r)\},\label{eq-sub1}\\
\Bb(U^r,V^r)&:=\{\Delta\in \R^{d_1\times d_2}\big|\text{row}(\Delta)\perp \text{col}(V^r), \text{col}(\Delta)\perp \text{col}(U^r)\},\label{eq-sub2}
\end{align}
\end{subequations}
where $\text{row}(\Delta)\in \R^{d_2}$ and $\text{col}(\Delta)\in \R^{d_1}$ respectively represent the row space and column space of the matrix $\Delta$. When the sub-matrices $(U^r,V^r)$ are explicit from the context, we use the shorthand notation $\Aa^r$ and $\Bb^r$ instead. Definitions of $\Aa^r$ and $\Bb^r$ have been introduced in \cite{agarwal2012fast,negahban2011estimation} to study low-rank estimation problems without measurement errors, in order to show the decomposability of the nuclear norm, that is, $\normm{\Theta+\Theta'}_*=\normm{\Theta}_*+\normm{\Theta'}_*$ holds for any arbitrary pair of matrices $\Theta\in \Aa^r$ and $\Theta'\in \Bb^r$, indicating that the nuclear norm is decomposable with respect to the subspaces $\Aa^r$ and $\Bb^r$. 

Still consider the singular value decomposition $\Theta^*=U^*D^*{V^*}^\top$.
For any positive number $\eta>0$ to be chosen, 
a set corresponding to $\Theta^*$ is defined as following:
\begin{equation}\label{eq-thresh}
K_\eta:=\{j\in\{1,2,\cdots,d\}\big||\sigma_j(\Theta^*)|\geq \eta\}.
\end{equation}
Using the above notations, one sees that the matrix $U^{|K_\eta|}$ (resp., $V^{|K_\eta|}$) represents the $d_1\times |K_\eta|$ (resp., the $d_2\times |K_\eta|$) orthogonal sub-matrix comprising of the singular vectors corresponding to the first $|K_\eta|$ singular values of $\Theta^*$.
Recall the subspace defined in \eqref{eq-sub2}, and define the matrix 
\begin{equation}\label{eq-resi}
\Theta^*_{K_\eta^c}:=\Pi_{\Bb^{|K_\eta|}}(\Theta^*).
\end{equation}
Then it is obvious to see that the matrix $\Theta^*_{K_\eta^c}$ is of rank at most $d-|K_\eta|$ and has singular values $\{\sigma_j(\Theta^*
)\big| j\in K_\eta^c\}$. Moreover, since the true parameter is assumed to be of near low-rank, i.e., $\Theta^*\in \B_q(R_q)$ (cf. \eqref{eq-lqball}), the cardinality of set $K_\eta$ (cf. \eqref{eq-thresh}) and the approximation error in the nuclear norm ( i.e., $\normm{\Theta^*_{K_\eta^c}}_*$) can both be bounded from above. In fact, it follows immediately from standard derivations (see, e.g., \cite{negahban2011estimation}) that
\begin{equation}\label{eq-thresh-bound}
|K_\eta|\leq \eta^{-q}R_q\quad \mbox{and}\quad \normm{\Theta^*_{K_\eta^c}}_*\leq\eta^{1-q}R_q.
\end{equation}
Essentially, the cardinality of $K_\eta$ serves as the effective rank under the near low-rank assumption, when $\eta$ is suitably chosen. This fact has been discussed in \cite{Li2024LowrankME,negahban2011estimation}, and we shall also clarified it in the proof of Theorem \ref{thm-stat}; see Remark \ref{rmk-stat} (iv).

Consider an arbitrary matrix $\Delta\in \R^{d_1\times d_2}$ with a singular value decomposition $\Delta=UDV^\top$, where $U\in \R^{d_1\times d}$ and $V^*\in \R^{d_2\times d}$ are orthonormal matrices with $d=\min\{d_1,d_2\}$. Let $\sigma(\Delta)$ be the vector formed by the singular values of $\Delta$ in decreasing order. Let $r$ be a positive integer and $J=\{1,2,\cdots,2r\}$. 
Define 
\begin{equation}\label{eq-thetaj}
\Delta_J=U\text{diag}(\sigma_J(\Delta))V^\top\quad  \mbox{and}\quad \Delta_{J^c}=U\text{diag}(\sigma_{J^c}(\Delta))V^\top.
\end{equation}
Then it is easy to see that $\Delta=\Delta_J+\Delta_{J^c}$ and $\Delta_J\perp \Delta_{J^c}$.

With these notations, we now state a useful technical lemma that shows, for the true parameter matrix $\Theta^*$ and any matrix $\Theta$, we can decompose the error matrix $\Delta:=\Theta-\Theta^*$ as the sum of two matrices $\beta$ and $\beta'$ such that the
rank of $\beta$ is bounded. In addition, the difference between the regularizer imposed on $\Theta^*$ and $\Theta$ can be bounded from above in terms of the nuclear norms. 


\begin{Lemma}\label{lem-decom}
Let $\Theta\in \R^{d_1\times d_2}$ be an arbitrary matrix. Define the error matrix $\Delta:=\Theta-\Theta^*$ with a singular value decomposition as $\Delta=UDV^\top$. Let $r$ be a positive integer and $J=\{1,2,\cdots,2r\}$.
Let $\Delta_J$ and $\Delta_{J^c}$ be given in \eqref{eq-thetaj}. Then the following conclusions hold:\\
\rm (i) there exists a decomposition $\Delta=\Delta'+\Delta''$ such that the matrix $\Delta'$ with $\text{rank}(\Delta')\leq 2r$;\\
\rm (ii) $\regu_\lambda(\Theta^*)-\regu_\lambda(\Theta)\leq \lambda(\normm{\Delta_J}_*-\normm{\Delta_{J^c}}_*)+2\lambda\sum_{j=r+1}^d\sigma_j(\Theta^*)$.
\end{Lemma}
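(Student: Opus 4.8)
The plan naturally splits along the two parts of the statement.

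For part (i) I would simply exhibit the SVD truncation: with $\Delta=UDV^\top$ as above and $\Delta':=\Delta_J$, $\Delta'':=\Delta_{J^c}$ from \eqref{eq-thetaj}, we have $\Delta=\Delta'+\Delta''$ and $\operatorname{rank}(\Delta')\le|J|=2r$, since $\Delta_J=U\operatorname{diag}(\sigma_J(\Delta))V^\top$ keeps only $2r$ singular values. An equivalent witness — the one that makes part (ii) run most smoothly — is to let $\Delta'':=\Pi_{\Bb^{r}}(\Delta)$ be the projection onto the subspace $\Bb^{r}$ of \eqref{eq-sub} attached to the best rank-$r$ approximation $\Theta^*_r$ of $\Theta^*$; then $\Delta'=\Delta-\Delta''=\Delta-(I-P_{U^r})\Delta(I-P_{V^r})=P_{U^r}\Delta+\Delta P_{V^r}-P_{U^r}\Delta P_{V^r}$ has rank at most $2r$ by the usual rank-doubling identity.

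For part (ii) the plan is to reduce everything to a rank-$r$ comparison via the triangle inequalities of Lemma \ref{lem-regu} and the decomposability of $\regu_\lambda$. Write $\Theta^*=\Theta^*_r+\Theta^*_{r^c}$, so $\Theta^*_r\in\Aa^r$, $\Theta^*_{r^c}\in\Bb^r$ have orthogonal row and column spaces and $\normm{\Theta^*_{r^c}}_*=\sum_{j=r+1}^d\sigma_j(\Theta^*)$, and take $\Delta=\Delta'+\Delta''$ with $\Delta''=\Pi_{\Bb^r}(\Delta)\in\Bb^r$ as in part (i). I would use three facts: (a) $p_\lambda(t)\le\lambda t$ for $t\ge0$ — immediate from Assumption \ref{asup-regu}(ii)--(iii), since $t\mapsto p_\lambda(t)/t$ is nonincreasing with limit $\lambda$ at $0^+$ — hence $\regu_\lambda(M)\le\lambda\normm{M}_*$ for every $M$; (b) the decomposability $\regu_\lambda(M+M')=\regu_\lambda(M)+\regu_\lambda(M')$ whenever $M\in\Aa^r$, $M'\in\Bb^r$, which holds because the nonzero singular values of $M+M'$ are the union of those of $M$ and $M'$; and (c) the inequalities \eqref{eq-regu+}--\eqref{eq-regu-} of Lemma \ref{lem-regu} together with the symmetry $\regu_\lambda(-X)=\regu_\lambda(X)$. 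Grouping $\Theta=(\Theta^*_r+\Delta'')+(\Theta^*_{r^c}+\Delta')$, applying \eqref{eq-regu-}, then (b) on the first group and \eqref{eq-regu+} on the second, and using $\regu_\lambda(\Theta^*)=\regu_\lambda(\Theta^*_r)+\regu_\lambda(\Theta^*_{r^c})$, gives
\[
\regu_\lambda(\Theta^*)-\regu_\lambda(\Theta)\ \le\ \regu_\lambda(\Delta')-\regu_\lambda(\Delta'')+2\regu_\lambda(\Theta^*_{r^c}).
\]
Now $2\regu_\lambda(\Theta^*_{r^c})\le2\lambda\normm{\Theta^*_{r^c}}_*=2\lambda\sum_{j=r+1}^d\sigma_j(\Theta^*)$ by (a), which produces the tail term; $\regu_\lambda(\Delta')\le\lambda\normm{\Delta'}_*$ by (a); and for the last term I would invoke the singular-value interlacing $\sigma_j(\Delta_{J^c})=\sigma_{2r+j}(\Delta)\le\sigma_j(\Delta'')$ (from $\operatorname{rank}(\Delta')\le2r$ and Weyl's inequality) together with monotonicity of $p_\lambda$ to get $\regu_\lambda(\Delta_{J^c})\le\regu_\lambda(\Delta'')$, i.e.\ $-\regu_\lambda(\Delta'')\le-\regu_\lambda(\Delta_{J^c})$. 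Converting $\regu_\lambda(\Delta_{J^c})$ to $\lambda\normm{\Delta_{J^c}}_*$ and $\normm{\Delta'}_*$ to $\normm{\Delta_J}_*$ then gives the asserted bound.

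I expect the main obstacle to be precisely this final conversion. The decomposition forced on us by decomposability is the $\Bb^r$-projection pair $(\Delta',\Delta'')$, whereas the statement is phrased via the SVD truncation pair $(\Delta_J,\Delta_{J^c})$ of $\Delta$; reconciling the two requires the singular-value (Ky Fan type) triangle inequalities of Lemma \ref{lem-tri} applied to $\Delta'=\Delta-\Delta''$ together with the Eckart--Young--Mirsky identity $\normm{\Delta_{J^c}}_*=\min_{\operatorname{rank}(M)\le2r}\normm{\Delta-M}_*$, and one must track the concave component in $\regu_\lambda=\Q_\lambda+\lambda\normm{\cdot}_*$ carefully (using Lemma \ref{lem-qlambda} and the structure of $p_\lambda$ in Assumption \ref{asup-regu}) so that the coefficients in front of $\normm{\Delta_J}_*$ and $\normm{\Delta_{J^c}}_*$ come out as exactly $\lambda$. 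Everything else is routine bookkeeping with the triangle inequalities of Lemma \ref{lem-regu}.
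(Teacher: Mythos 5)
Your architecture matches the paper's almost line for line: part (i) is the Recht--Fazel--Parrilo block construction (your $\Delta''=\Pi_{\Bb^r}(\Delta)$ is exactly their $\Xi_{22}$ block), and for part (ii) the paper performs precisely your grouping of $\Theta$ into $(\Pi_{\Aa^r}(\Theta^*)+\Delta'')+(\Pi_{\Bb^r}(\Theta^*)+\Delta')$, applies Lemma \ref{lem-regu}, and reaches the same intermediate bound $\regu_\lambda(\Theta^*)-\regu_\lambda(\Theta)\le \regu_\lambda(\Delta')-\regu_\lambda(\Delta'')+2\regu_\lambda(\Pi_{\Bb^r}(\Theta^*))$; the tail term is then disposed of, as you propose, via $p_\lambda(t)\le\lambda t$.

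The genuine gap is in the final conversion, exactly where you flagged "the main obstacle" but did not resolve it. You propose to bound the two remaining terms separately: $\regu_\lambda(\Delta')\le\lambda\normm{\Delta'}_*$ and then "converting $\regu_\lambda(\Delta_{J^c})$ to $\lambda\normm{\Delta_{J^c}}_*$". But $\regu_\lambda(\Delta_{J^c})$ enters with a minus sign, so this conversion requires the lower bound $\regu_\lambda(\Delta_{J^c})\ge\lambda\normm{\Delta_{J^c}}_*$ --- the reverse of your own fact (a), and false for SCAD and MCP, where $p_\lambda(t)<\lambda t$ for $t>0$. Similarly, the step $\normm{\Delta'}_*\le\normm{\Delta_J}_*$ is asserted but does not follow from the tools you list: Ky Fan gives $\sum_{j\le 2r}\sigma_j(\Delta-\Delta'')\le\sum_{j\le 2r}\sigma_j(\Delta)+\sum_{j\le 2r}\sigma_j(\Delta'')$ with an unwanted extra term, and Eckart--Young gives $\normm{\Delta_{J^c}}_*\le\normm{\Delta''}_*$, again the unhelpful direction. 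The paper sidesteps both problems by never bounding the terms separately: invoking \cite[Lemma 5 and Lemma 6]{loh2013local}, it bounds the \emph{difference} jointly, pairing $\sigma_i(\Delta_J)$ with $\sigma_i(\Delta_{J^c})$ and using the inequality $p_\lambda(s)-p_\lambda(t)\le\lambda(s-t)$ for $s\ge t\ge 0$ (a consequence of concavity of $p_\lambda$ on $\R_+$, the fact that $p'_\lambda$ is nonincreasing, and $\lim_{t\to 0^+}p'_\lambda(t)=\lambda$), which delivers $\regu_\lambda(\Delta_J)-\regu_\lambda(\Delta_{J^c})\le\lambda\left(\normm{\Delta_J}_*-\normm{\Delta_{J^c}}_*\right)$ in one stroke. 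Without this difference inequality (or an equivalent device) your argument does not close, because the concave component $\Q_\lambda$ makes the two one-sided bounds you need mutually incompatible.
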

\begin{proof}
(i) The first part of this lemma is proved in  \cite[Lemma 3.4]{recht2010guaranteed}, and we provide the proof here for completeness. Write the singular value decomposition of $\Theta^*$ as $\Theta^*=U^*D^*{V^*}^\top$, where $U^*\in \R^{d_1\times d_1}$ and $V^*\in \R^{d_2\times d_2}$ are orthogonal matrices, and $D^*\in\R^{d_1\times d_2}$ is the matrix consisting of the singular values of $\Theta^*$. Define the matrix $\Xi={U^*}^\top\Delta V^*\in \R^{d_1\times d_2}$, and partition $\Xi$ in block form as follows
      \[
      \Xi:=\begin{pmatrix}
        \Xi_{11} & \Xi_{12} \\
        \Xi_{21} & \Xi_{22}
        \end{pmatrix},\ \text{where}\ \Xi_{11}\in \R^{r\times r}\ \text{and}\  \Xi_{22}\in \R^{(m_1-r)\times (m_2-r)}.
      \]
      Set the matrices as
      \[
      \Delta':=U^*\begin{pmatrix}
        0 & 0 \\
        0 & \Xi_{22}
        \end{pmatrix}{V^*}^\top\ \text{and}\  \Delta'':=\Delta-\Delta'.
      \]
      Then the rank of $\Delta'$ is upper bounded as
      \begin{equation*}
        \text{rank}(\Delta')=\text{rank}\begin{pmatrix}
        \Xi_{11} & \Xi_{12} \\
        \Xi_{21} & 0
        \end{pmatrix}\leq
        \text{rank}\begin{pmatrix}
        \Xi_{11} & \Xi_{12} \\
        0 & 0
        \end{pmatrix}+
        \text{rank}\begin{pmatrix}
        \Xi_{11} & 0 \\
        \Xi_{21} & 0
        \end{pmatrix}\leq 2r,
      \end{equation*}
      which established Lemma \ref{lem-decom}(i).\\
(ii) 
It follows from the constructions of $\Delta'$ and $\Delta''$
that $\sigma(\Delta')+\sigma(\Delta'')=\sigma(\Delta)$ with $\text{supp}(\sigma(\Delta'))\leq 2r$ and $\inm{\Delta'}{\Delta''}=0$.
    Note that the decomposition $\Theta^*=\Pi_{\Aa^r}(\Theta^*)+\Pi_{\Bb^r}(\Theta^*)$ holds. This equality, together with Lemma \ref{lem-regu}, implies that
      \begin{equation}\label{eq-lem4-2}
      \begin{aligned}
       \regu_\lambda(\Theta)&=\regu_\lambda[(\Pi_{\Aa^r}(\Theta^*)+\Delta'')+(\Pi_{\Bb^r}(\Theta^*)+\Delta')]\\
       &=\regu_\lambda[(\Pi_{\Aa^r}(\Theta^*)+\Delta'')-(-\Pi_{\Bb^r}(\Theta^*)-\Delta')]\\
       &\geq \regu_\lambda(\Pi_{\Aa^r}(\Theta^*)+\Delta'')-\regu_\lambda(-\Pi_{\Bb^r}(\Theta^*)-\Delta')\\
       &\geq \regu_\lambda(\Pi_{\Aa^r}(\Theta^*))+\regu_\lambda(\Delta'')-\regu_\lambda(\Pi_{\Bb^r}(\Theta^*))-\regu_\lambda(\Delta').
      \end{aligned}
      \end{equation}
      Consequently, we have
      \begin{equation}\label{eq-lem4-3}
      \begin{aligned}
      \regu_\lambda(\Theta^*)-\regu_\lambda(\Theta)
      &\leq
      \regu_\lambda(\Theta^*)-\regu_\lambda(\Pi_{\Aa^r}(\Theta^*))-\regu_\lambda(\Delta'')+\regu_\lambda(\Pi_{\Bb^r}(\Theta^*))+\regu_\lambda(\Delta')\\
      &\leq
      \regu_\lambda(\Delta')-\regu_\lambda(\Delta'')+2\regu_\lambda(\Pi_{\Bb^r}(\Theta^*))\\
      &\leq
      \regu_\lambda(\Delta_J)-\regu_\lambda(\Delta_{J^c})+2\regu_\lambda(\Pi_{\Bb^r}(\Theta^*)),
      \end{aligned}
      \end{equation}
      where the last inequality is from the definitioen of the set $J$ and Assumption \ref{asup-regu} (iv).
Then it follows from \cite[Lemma 5 and Lemma 6]{loh2013local} that 
$$\sum_{i=1}^dp_\lambda(\sigma_i(\Delta_J))-\sum_{i=1}^dp_\lambda(\sigma_i(\Delta_{J^c}))\leq \lambda\left(\sum_{i=1}^d\sigma_i(\Delta_J)-\sum_{i=1}^d\sigma_j(\Delta_{J^c})\right),$$
$$\sum_{i=1}^dp_\lambda(\sigma_i(\Pi_{\Bb^r}(\Theta^*)))\leq \lambda\sum_{i=1}^d\sigma_i(\Pi_{\Bb^r}(\Theta^*)).$$
Combining these two inequalities with \eqref{eq-lem4-3} and the definition of $\regu_\lambda$, we arrive at that $\regu_\lambda(\Theta^*)-\regu_\lambda(\Theta)\leq \lambda(\normm{\Delta_J}_*-\normm{\Delta_{J^c}}_*)+2\lambda\normm{\Pi_{\Bb^r}(\Theta^*)}_*$. Then Lemma \ref{lem-decom} (ii) follows from the fact that $\normm{\Pi_{\Bb^r}(\Theta^*)}_*=\sum_{j=r+1}^{d}\sigma_j(\Theta^*)$. The proof is complete.
\end{proof}

\subsection{Statistical recovery bounds}

Recall the feasible region $\Omega$ given in \eqref{eq-feasi}. We shall provide the recovery bound for each stationary point $\tilde{\Theta}\in \Omega$ of the nonconvex optimization problem \eqref{eq-esti} satisfying the first-order necessary condition:
\begin{equation}\label{1st-cond}
\inm{\nabla\loss_N(\tilde{\Theta})+\nabla\regu_\lambda(\tilde{\Theta})}{\Theta-\tilde{\Theta}}\geq 0,\quad \text{for all}\ \Theta\in \Omega.
\end{equation}

\begin{Theorem}\label{thm-stat}
Let $R_q>0$ and $\omega>0$ be positive numbers such that $\Theta^*\in \B_q(R_q)\cap \Omega$. Let $\tilde{\Theta}$ be a stationary point of the optimization problem \eqref{eq-esti}. Suppose that the nonconvex regularizer $\regu_\lambda$ satisfies Assumption \ref{asup-regu}, and that the empirical loss function $\loss_N$ satisfies the \emph{RSC} condition \eqref{eq-sta-rsc} with $\alpha_1>\mu$.
Assume that $(\lambda, \omega)$ are chosen to satisfy
\begin{equation}\label{eq-lambda-sta}
\lambda\geq 2\max\{\normm{\nabla\loss_N(\Theta^*)}_\emph{op},4\omega\tau_1\},
\end{equation}
then we have that 
\begin{equation}\label{eq-l2bound}
\normm{\hat{\Theta}-\Theta^*}_\text{F}^2\leq 9R_q\left(\frac{\lambda}{\alpha_1-\mu}\right)^{2-q},
\end{equation}
\begin{equation}\label{eq-l1bound}
\normm{\hat{\Theta}-\Theta^*}_*\leq (24\sqrt{2}+8)R_q\left(\frac{\lambda}{\alpha_1-\mu}\right)^{1-q}.
\end{equation}
\end{Theorem}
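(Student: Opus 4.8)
The plan is to run the standard ``restricted strong convexity plus optimality condition'' argument, adapted to the spectral (nuclear-norm) setting and to the nonconvex regularizer $\regu_\lambda = \Q_\lambda + \lambda\normm{\cdot}_*$. Write $\Delta := \tilde\Theta - \Theta^*$, fix a rank level $r$ to be optimized at the end, and set $J = \{1,\dots,2r\}$, so that (by Lemma~\ref{lem-decom}) $\Delta = \Delta' + \Delta''$ with $\mathrm{rank}(\Delta') \le 2r$ and also the decomposition into $\Delta_J, \Delta_{J^c}$. The first move is to feed $\Theta = \Theta^*$ into the first-order stationarity inequality \eqref{1st-cond}, rewrite $\nabla\regu_\lambda(\tilde\Theta) = \nabla\Q_\lambda(\tilde\Theta) + \lambda z$ for a subgradient $z \in \partial\normm{\tilde\Theta}_*$, and combine with: (a) the RSC bound \eqref{eq-sta-rsc}, which gives $\inm{\nabla\loss_N(\tilde\Theta) - \nabla\loss_N(\Theta^*)}{\Delta} \ge \alpha_1\normm{\Delta}_\text{F}^2 - \tau_1\normm{\Delta}_*^2$; (b) the concavity estimate \eqref{lem-qlambda-12} (or \eqref{lem-qlambda-11}) controlling the $\Q_\lambda$ term by $-\mu\normm{\Delta}_\text{F}^2$; and (c) convexity of the nuclear norm, $\inm{z}{\Delta} \ge \normm{\tilde\Theta}_* - \normm{\Theta^*}_*$ wait—more precisely one uses $\lambda\inm{z}{\Theta^*-\tilde\Theta} \le \lambda(\normm{\Theta^*}_* - \normm{\tilde\Theta}_*)$. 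Assembling these yields an inequality of the shape
\begin{equation*}
(\alpha_1 - \mu)\normm{\Delta}_\text{F}^2 \le \lambda\big(\normm{\Theta^*}_* - \normm{\tilde\Theta}_*\big) + \regu_\lambda(\Theta^*) - \regu_\lambda(\tilde\Theta) + \tau_1\normm{\Delta}_*^2 + \text{(gradient term)},
\end{equation*}
where the gradient term $\inm{\nabla\loss_N(\Theta^*)}{-\Delta}$ is bounded by $\normm{\nabla\loss_N(\Theta^*)}_\text{op}\normm{\Delta}_*$ via the trace-norm/operator-norm duality, which is why $\lambda \ge 2\normm{\nabla\loss_N(\Theta^*)}_\text{op}$ is needed.

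Next I would convert this into a cone condition on $\Delta$. Using Lemma~\ref{lem-decom}(ii) to bound $\regu_\lambda(\Theta^*) - \regu_\lambda(\tilde\Theta) \le \lambda(\normm{\Delta_J}_* - \normm{\Delta_{J^c}}_*) + 2\lambda\sum_{j>r}\sigma_j(\Theta^*)$, and handling the $\tau_1\normm{\Delta}_*^2$ term by the side constraint $\normm{\Delta}_* \le \normm{\tilde\Theta}_* + \normm{\Theta^*}_* \le 2\omega$, hence $\tau_1\normm{\Delta}_*^2 \le 2\omega\tau_1\normm{\Delta}_*$, which is absorbed once $\lambda \ge 8\omega\tau_1$ (this is the role of the second term in \eqref{eq-lambda-sta}). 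One then arrives, after dropping the nonnegative $(\alpha_1-\mu)\normm{\Delta}_\text{F}^2$ on the left in this branch, at a bound of the form $\normm{\Delta_{J^c}}_* \le c_1\normm{\Delta_J}_* + c_2\sum_{j>r}\sigma_j(\Theta^*)$ for explicit constants — the approximate-cone membership. Since $\mathrm{rank}(\Delta_J) \le 2r$, we get $\normm{\Delta_J}_* \le \sqrt{2r}\,\normm{\Delta_J}_\text{F} \le \sqrt{2r}\,\normm{\Delta}_\text{F}$, so $\normm{\Delta}_* \le (1+c_1)\sqrt{2r}\,\normm{\Delta}_\text{F} + c_2\sum_{j>r}\sigma_j(\Theta^*)$.

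Then I would return to the main inequality \emph{without} discarding $\normm{\Delta}_\text{F}^2$, substitute the cone bound into the remaining $\normm{\Delta}_*$ (and $\normm{\Delta}_*^2$) terms on the right, and solve the resulting quadratic inequality in $\normm{\Delta}_\text{F}$: something like $(\alpha_1 - \mu)\normm{\Delta}_\text{F}^2 \le c_3\lambda\sqrt{r}\,\normm{\Delta}_\text{F} + c_4\lambda\sum_{j>r}\sigma_j(\Theta^*)$, which gives $\normm{\Delta}_\text{F}^2 \lesssim \dfrac{\lambda^2 r}{(\alpha_1-\mu)^2} + \dfrac{\lambda}{\alpha_1-\mu}\sum_{j>r}\sigma_j(\Theta^*)$. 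Finally, optimize over the choice of $r$ using the tail bound \eqref{eq-thresh-bound} — taking $r = |K_\eta|$ with the threshold $\eta \asymp \lambda/(\alpha_1 - \mu)$, so that $r \le \eta^{-q}R_q$ and $\sum_{j>r}\sigma_j(\Theta^*) \le \eta^{1-q}R_q$ — to balance the two terms and land on $\normm{\Delta}_\text{F}^2 \le 9R_q\big(\lambda/(\alpha_1-\mu)\big)^{2-q}$; plugging this back into the cone inequality yields the $\ell_1$/nuclear bound \eqref{eq-l1bound}. The main obstacle, and where care is needed to get the stated constants, is the bookkeeping in the cone step: correctly propagating the nonconvex correction $\mu$ through every occurrence of $\normm{\Delta}_\text{F}^2$ (so that $\alpha_1 - \mu$, not $\alpha_1$, appears), cleanly absorbing the $\tau_1\normm{\Delta}_*^2$ term via the radius $\omega$ and the $4\omega\tau_1$ threshold, and tracking the factor-of-$2$ losses from Lemma~\ref{lem-decom}(ii) and from $J = \{1,\dots,2r\}$ versus rank $r$, since the proof must also cover the near-low-rank ($q \in (0,1]$) case where the approximation term $\sum_{j>r}\sigma_j(\Theta^*)$ is genuinely present rather than zero.
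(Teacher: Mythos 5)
Your proposal follows essentially the same route as the paper's proof: plug $\Theta^*$ into the stationarity condition, combine the RSC bound with the concavity estimate for $\Q_\lambda$ (producing the $\alpha_1-\mu$ curvature) and nuclear-norm convexity, bound the gradient term by duality, invoke Lemma~\ref{lem-decom}(ii), absorb $\tau_1\normm{\Delta}_*^2$ via the side constraint and the $4\omega\tau_1$ threshold, derive the approximate cone condition, solve the resulting quadratic in $\normm{\Delta}_\text{F}$, and choose $r=|K_\eta|$ with $\eta=\lambda/(\alpha_1-\mu)$. The steps, lemmas invoked, and order of the argument all match; only the explicit constants are left as bookkeeping, which you correctly flag as the delicate part.
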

\begin{proof}
Set $\tilde{\Delta}:=\tilde{\Theta}-\Theta^*$. By the RSC condition \eqref{eq-sta-rsc}, one has that
\begin{equation}\label{eq-lem3-1}
\inm{\nabla\loss_N(\tilde{\Theta})-\nabla\loss_N(\Theta^*)}{\tilde{\Delta}}\geq \alpha_1\normm{\tilde{\Delta}}_\text{F}^2-\tau_1\normm{\tilde{\Delta}}_*^2.
\end{equation}
On the other hand, it follows from \eqref{lem-qlambda-11} and  \eqref{lem-qlambda-14} in Lemma \ref{lem-qlambda} that
\begin{equation*}
\begin{aligned}
\inm{\nabla\regu_\lambda(\tilde{\Theta})}{\Theta^*-\tilde{\Theta}}
&= \inm{\nabla\Q_\lambda(\tilde{\Theta})+\lambda\tilde{G}}{\Theta^*-\tilde{\Theta}}\\
&\leq \inm{\nabla\Q_\lambda(\Theta^*)}{\Theta^*-\tilde{\Theta}}+\mu\normm{\Theta^*-\tilde{\Theta}}_\text{F}^2+\inm{\lambda\tilde{G}}{\Theta^*-\tilde{\Theta}}\\
&\leq \Q_\lambda(\Theta^*)-\Q_\lambda(\tilde{\Theta})+\mu\normm{\Theta^*-\tilde{\Theta}}_\text{F}^2+\inm{\lambda\tilde{G}}{\Theta^*-\tilde{\Theta}},
\end{aligned}
\end{equation*}
where $\tilde{G}\in \partial \normm{\tilde{\Theta}}_*$.
Moreover, since the function $\normm{\cdot}_*$ is convex, one has that
\begin{equation*}
\normm{\Theta^*}_*-\normm{\tilde{\Theta}}_*\geq \inm{\tilde{G}}{\Theta^*-\tilde{\Theta}}.
\end{equation*}
This, together with the former inequality, implies that
\begin{equation}\label{eq-lem3-2}
\inm{\nabla\regu_\lambda(\tilde{\Theta})}{\Theta^*-\tilde{\Theta}}\leq 
\regu_\lambda(\Theta^*)-\regu_\lambda(\tilde{\Theta})+\mu\normm{\tilde{\Delta}}_\text{F}^2.
\end{equation}
Then combining \eqref{eq-lem3-1}, \eqref{eq-lem3-2} and \eqref{1st-cond} (with $\Theta^*$ in place of $\Theta$), we have that
\begin{equation}\label{eq-lem3-3}
\begin{aligned}
\alpha_1\normm{\tilde{\Delta}}_\text{F}^2-\tau_1\normm{\tilde{\Delta}}_*^2
&\leq -\inm{\nabla\loss_N(\Theta^*)}{\tilde{\Delta}}+\regu_\lambda(\Theta^*)-\regu_\lambda(\tilde{\Theta})+\mu\normm{\tilde{\Delta}}_\text{F}^2\\
&\leq  \normm{\nabla\loss_N(\Theta^*)}_\text{op}\normm{\tilde{\Delta}}_*+\regu_\lambda(\Theta^*)-\regu_\lambda(\tilde{\Theta})+\mu\normm{\tilde{\Delta}}_\text{F}^2\\
&\leq  \frac{\lambda}{2}\normm{\tilde{\Delta}}_*+\regu_\lambda(\Theta^*)-\regu_\lambda(\tilde{\Theta})+\mu\normm{\tilde{\Delta}}_\text{F}^2\\
\end{aligned}
\end{equation}
where the second inequality is from H{\"o}lder's inequality, and the last inequality is from assumption \eqref{eq-lambda-sta}.
Let $J$ denote the index set corresponding to the $2r$ largest singular values of $\tilde{\Delta}$ with $r$ to be chosen later and recall the definitions given in \eqref{eq-thetaj}. It then follows from Lemma \ref{lem-decom} (ii) and noting the fact that $\normm{\tilde{\Delta}}_*\leq \normm{\Theta^*}_*+\normm{\tilde{\Theta}}_*\leq 2\omega$, one has from \eqref{eq-lem3-3} that 
\begin{equation}\label{eq-lem3-4}
\begin{aligned}
(\alpha_1-\mu)\normm{\tilde{\Delta}}_\text{F}^2&\leq \frac{\lambda}{2}\normm{\tilde{\Delta}}_*+2\omega\tau_1\normm{\tilde{\Delta}}+\lambda(\normm{\tilde{\Delta}_J}_*-\normm{\tilde{\Delta}_{J^c}}_*)+2\lambda\sum_{j=r+1}^d\sigma_j(\Theta^*)\\
&\leq \frac{3}{4}\lambda\normm{\tilde{\Delta}}_*+\lambda(\normm{\tilde{\Delta}_J}_*-\normm{\tilde{\Delta}_{J^c}}_*)+2\lambda\sum_{j=r+1}^d\sigma_j(\Theta^*)\\
&\leq \frac{3}{4}\lambda(\normm{\tilde{\Delta}_J}_*+\normm{\tilde{\Delta}_{J^c}}_*)+\lambda(\normm{\tilde{\Delta}_J}_*-\normm{\tilde{\Delta}_{J^c}}_*)+2\lambda\sum_{j=r+1}^d\sigma_j(\Theta^*),
\end{aligned}
\end{equation}
where the second inequality is due to assumption \eqref{eq-lambda-sta}
and the last inequality is from triangle inequality. Since $\alpha_1>\mu$ by assumption, one has by the former inequality that \begin{equation}\label{eq-thm1-cone}
\normm{\tilde{\Delta}_{J^c}}_*\leq 7\normm{\tilde{\Delta}_J}_*+8\sum_{j=r+1}^d\sigma_j(\Theta^*).
\end{equation}
Combining \eqref{eq-lem3-4} and \eqref{eq-thm1-cone} yields that
\begin{equation*}
\begin{aligned}
(\alpha_1-\mu)\normm{\tilde{\Delta}}_\text{F}^2&\leq \frac{7}{4}\lambda\normm{\tilde{\Delta}_J}_*-\frac{1}{4}\normm{\tilde{\Delta}_{J^c}}_*+2\lambda\sum_{j=r+1}^d\sigma_j(\Theta^*)\\
&\leq \frac{7}{4}\lambda\sqrt{2r}\normm{\tilde{\Delta}}_\text{F}+2\lambda\sum_{j=r+1}^d\sigma_j(\Theta^*),
\end{aligned}
\end{equation*}
where the second inequality is due to the fact that $\text{rank}(\tilde{\Delta}_J)\leq 2r$. Then it follows that
\begin{equation}\label{eq-thm1-rbound}
\normm{\hat{\Delta}}_\text{F}^2\leq \frac{98r\lambda^2+32(\alpha_1-\mu)\lambda\sum_{j=r+1}^d\sigma_j(\Theta^*)}{16(\alpha_1-\mu)^2}.
\end{equation}
Recall the set $K_\eta$ defined in \eqref{eq-thresh} and set $r=|K_\eta|$. Combining \eqref{eq-thm1-rbound} with \eqref{eq-thresh-bound} and setting $\eta=\frac{\lambda}{\alpha_1-\mu}$, we arrive at \eqref{eq-l2bound}. Moreover, it follows from \eqref{eq-thm1-cone} that  
\begin{equation*}
\normm{\tilde{\Delta}}_*\leq 8\normm{\tilde{\Delta}_J}_*+8\sum_{j=r+1}^d\sigma_j(\Theta^*)\leq 8\sqrt{2r}\normm{\tilde{\Delta}}_\text{F}+8\sum_{j=r+1}^d\sigma_j(\Theta^*),
\end{equation*}
and thus \eqref{eq-l1bound} holds. The proof is complete.
\end{proof}
\begin{Remark}\label{rmk-stat}
{\rm (i)} There are two parameters involved in the nonconvex optimization problem \eqref{eq-esti}, i.e., the regularization parameter $\lambda$ and the radius of the side constraint $\omega$. On the synthetic data, since the true parameter $\Theta^*$ is settled beforehand, these two parameters can be determined as we did in Section \ref{sec-num}. In real data analysis where $\Theta^*$ is always unknown, one might think that the side constraint $\normm{\Theta^*}_*\leq \omega$ is restrictive. Even so, Theorem \ref{thm-stat} still provides some heuristic to find the scale for the radius $\omega$. In detail, it follows from assumption \eqref{eq-lambda-sta} that the relation $\lambda\geq 8\tau\omega$ holds, based on which methods such as cross-validation can be adopted to tune these two parameters. The choice on $\lambda$ and $\omega$ in Theorem \ref{thm-algo} can also be decided in this way.

{\rm (ii)} Note the quantity $\alpha_1-\mu$ appearing in the denominators of the recovery bounds in Theorem \ref{thm-stat}, which actually plays the role of balancing the nonconvexity of the estimator \eqref{eq-esti} (Theorem \ref{thm-algo} also involves a similar quantity and can be explained in a similar way).  Specifically, $\alpha_1$ measures the degree of curvature of the empirical loss function $\loss_N$ and $\mu$ measures the degree of nonconvexity of the penalty $\regu_\lambda$, a fact indicating that these two quantities play opposite roles in the estimation procedure. Larger values of $\mu$ result in more severe nonconvexity of the low-rank regularizer and thus a worse behavior of the objective function \eqref{eq-esti}, while larger values of $\alpha_1$ means more curvature of the loss function and thus leading to a better estimation. Hence the requirement that $\alpha_1>\mu$ is used to control this oppositional relationship and guarantee a good performance of local optima.

{\rm (iii)} Theorem \ref{thm-stat} provides a unified framework for low-rank matrix estimation in a generic observation model via nonconvex optimization and demonstrates that the recovery bound on the squared Frobenius norm for all the stationary points of the nonconvex estimator \eqref{eq-esti} scale as $\normm{\hat{\Theta}-\Theta^*}_\text{F}^2=O(\lambda^{2-q}R_q)$, which covers the specific multi-response regression with nuclear norm regularization in \cite[Theorem 1]{Li2024LowrankME}. Combining assumption \eqref{eq-lambda-sta} and probabilistic discussions on specific error models in the next section (cf. Propositions \ref{prop-add} and \ref{prop-mis}), one sees that it is suitable to choose the regularization parameter as $\lambda=\Omega\left(\sqrt{d_1d_2}\frac{\log d_1+\log d_2}{N}\right)$. Providing $R_q\left(\sqrt{d_1d_2}\frac{\log d_1+\log d_2}{N}\right)^{1-q/2}=o(1)$, this recovery bound implies the statistical consistency of the estimator $\tilde{\Theta}$. Furthermore, due to the nonconvexity of the optimization problem \eqref{eq-esti}, it is always difficult to obtain a global solution. Nevertheless, this issue is not significant in our framework since we have established recovery bounds for any stationary points. Theorem \ref{thm-stat} thus is independent of any specific algorithms, suggesting that any numerical procedure can consistently recover the true low-rank matrix provided that it can converge to a stationary point.

{\rm (vi)} Theorem \ref{thm-stat} provides the recovery bound under the more general near low-rank assumption that $\Theta^*\in \B_q(R_q)$ with $q\in [0,1]$. Since the low-rankness of $\Theta^*$ is measured via the matrix $\ell_q$-norm and larger values of $q$ means higher rank, \eqref{eq-l2bound} indicates that the convergence rate slows down as $q$ increases to 1. In addition, Theorem \ref{thm-stat} also sheds some insight on the effective rank of a near low-rank matrix $\Theta^*$. Actually, when the regularization parameter $\lambda$ is chosen as $\lambda=\Omega\left(\sqrt{d_1d_2}\frac{\log d_1+\log d_2}{N}\right)$, and the threshold $\eta$ in \eqref{eq-thresh} is set to $\eta=\frac{\lambda}{\alpha_1-\mu}$ as we did in the proof of Theorem \ref{thm-stat}, the cardinality of the set $K_\eta$ (cf. \eqref{eq-thresh}) acts as the effective rank under the near low-rank assumption. This special value is used to provide a balance between the estimation and approximation errors for a near low-rank matrix and has also been discussed in \cite{Li2024LowrankME,negahban2011estimation}. When the sample size $N$ increases, this effective rank also increases since $\lambda$ tends to $0$ (cf. \eqref{eq-thresh-bound}), a fact indicating that with more samples collected, it is likely to recover more smaller singular values of a near low-rank matrix. 

{\rm (v)} \cite{gui2015towards} also considered the problem of low-rank matrix estimation with nonconvex penalty. However, the loss function there is still assumed to be convex and only recovery bounds for global solutions are established. \cite{Li2024LowrankME} studied the multi-response errors-in-variables regression and proposed a nonconvex estimator based on a nonconvex loss function and a nuclear norm regularizer. The results there are only applicable for a global solution whereas Theorem \ref{thm-stat} is a much stronger result holding for any stationary point and covering a more general class of nonconvex regularizers beyond the nuclear norm.

\end{Remark}

\subsection{Computational convergence rates}

The proximal gradient method \cite{nesterov2007gradient} is now applied to solve the nonconvex optimization problem \eqref{eq-esti} through a simple modification of the objective function and is proved to converge geometrically under the RSC/RSM conditions. Recall that the regularizer can be decomposed as $\regu_\lambda(\cdot)=\Q_\lambda(\cdot)+\lambda\normm{\cdot}_*$, and define the modified loss function $\bar{\loss}_N(\cdot)=\loss_N(\cdot)+\Q_\lambda(\cdot)$. Then the optimization objective function can be written as $\obj(\cdot)=\bar{\loss}_N(\cdot)+\lambda\normm{\cdot}_*$. It is easy to see that the optimization objective function is decomposed into a differentiable but nonconvex function and a nonsmooth but convex function (i.e., the nuclear norm). 

Recall the feasible region $\Su=\{\Theta\in \R^{d_1\times d_2}\big|\normm{\Theta}_*\leq \omega\}$ given in \eqref{eq-feasi}. Applying the proximal gradient method proposed in \cite{nesterov2007gradient} to \eqref{eq-esti}, we obtain a sequence of iteration points $\{\Theta^t\}_{t=0}^\infty$ as
\begin{equation}\label{algo-pga}
\Theta^{t+1}\in \argmin_{\Theta\in \Su}\left\{\frac{1}{2}\normm{\Theta-\left(\Theta^t-\frac{\nabla{\bar{\loss}_N(\Theta^t)}}{v}\right)}_\text{F}^2+\frac{\lambda}{v}\normm{\Theta}_*\right\},
\end{equation}
where $\frac{1}{v}$ is the step size.

Given $\Theta^t$, one can follow \cite{loh2015regularized} to generate the next iteration point $\Theta^{t+1}$ via the following three steps; see \cite[Appendix C.1]{loh2015regularized} for details.
\begin{enumerate}[(1)]
\item First optimize the unconstrained optimization problem
\begin{equation*}
\hat{\Theta^t}\in \argmin_{\Theta\in \R^{d_1\times d_2}}\left\{\frac{1}{2}\normm{\Theta-\left(\Theta^t-\frac{\nabla{\bar{\loss}_N(\Theta^t)}}{v}\right)}_\text{F}^2+\frac{\lambda}{v}\normm{\Theta}_*\right\}.
\end{equation*}
\item If $\normm{\hat{\Theta^t}}_*\leq \omega$, set $\Theta^{t+1}=\hat{\Theta^t}$.
\item Otherwise, if $\normm{\hat{\Theta^t}}_*>\omega$, optimize the constrained optimization problem
\begin{equation*}
\Theta^{t+1}\in \argmin_{\Theta\in \Su}\left\{\frac{1}{2}\normm{\Theta-\left(\Theta^t-\frac{\nabla{\bar{\loss}_N(\Theta^t)}}{v}\right)}_\text{F}^2\right\}.
\end{equation*}
\end{enumerate}

Before we give our main computational result that the sequence generated by \eqref{algo-pga} converges geometrically to a small neighborhood of any global solution $\hat{\Theta}$, some notations are needed to simplify the expositions. Define the Taylor error $\bar{\T}(\Theta,\Theta')$ for the modified loss function $\bar{\loss}_n$ as
\begin{equation}\label{taylor-modified}
\bar{\T}(\Theta,\Theta')=\T(\Theta,\Theta')+\Q_\lambda(\Theta)-\Q_\lambda(\Theta')-\inm{\nabla\Q_\lambda(\Theta')}{\Theta-\Theta'}.
\end{equation}
Recall the RSC and RSM conditions in \eqref{eq-alg-rsc} and \eqref{eq-alg-rsm}, respectively. Throughout this section, we set 
$\tau:=\max\{\tau_2,\tau_3\}$. Recall the true underlying parameter $\Theta^*\in \B_q(R_q)$ (cf. \eqref{eq-lqball}). Let $\hat{\Theta}$ be a global solution of the optimization problem \eqref{eq-esti}. Then unless otherwise specified, we define
\begin{align}
&\bar{\epsilon}_{\text{stat}}:=8\lambda^{-\frac{q}{2}}R_q^{\frac12}\left(\sqrt{2}\normm{\hat{\Theta}-\Theta^*}_\text{F}+\lambda^{1-\frac{q}2}R_q^{\frac12}\right),\label{bar-epsilon}\\
&\kappa:= \left\{1-\frac{2\alpha-\mu}{8v}+\frac{512\tau\lambda^{-q}R_q}{2\alpha-\mu}\right\}\left\{1-\frac{512\tau\lambda^{-q}R_q}{2\alpha-\mu}\right\}^{-1},\label{lem-bound-kappa}\\
&\xi:= 2\tau\left\{\frac{2\alpha-\mu}{8v}+\frac{1024\tau\lambda^{-q}R_q}{2\alpha-\mu}+5\right\}\left\{1-\frac{512\tau\lambda^{-q}R_q}{2\alpha-\mu}\right\}^{-1}.\label{lem-bound-xi}
\end{align}

For a given number $\delta>0$ and an integer $T>0$ such that
\begin{equation}\label{lem-cone-De1}
\obj(\Theta^t)-\obj(\hat{\Theta})\leq \delta, \quad \forall\ t\geq T,
\end{equation}
define
\begin{equation}\label{eq-epdel}
\epsilon(\delta):=2\min\left\{\frac{\delta}{\lambda},\omega\right\}.
\end{equation}

With this setup, we now state our main result on computational guarantee as follows.
\begin{Theorem}\label{thm-algo}
Let $R_q>0$ and $\omega>0$ be positive numbers such that $\Theta^*\in \B_q(R_q)\cap \Su$. Let $\hat{\Theta}$ be a global solution of the optimization problem \eqref{eq-esti}. Suppose that the nonconvex regularizer $\regu_\lambda$ satisfies Assumption \ref{asup-regu}, and that the empirical loss function $\loss_N$ satisfies the \emph{RSC} and \emph{RSM} conditions (cf. \eqref{eq-alg-rsc} and \eqref{eq-alg-rsm})  with $\alpha_2>\mu/2$.
Assume that $(\lambda, \omega)$ are chosen to satisfy 
\begin{equation}\label{eq-lambda-alg}
\lambda\geq  \max\left\{4\normm{\nabla\loss_N(\Theta^*)}_\emph{op}, 8\omega\tau,\left(\frac{256\tau R_q}{2\alpha_2-\mu}\right)^{1/q}\right\}.
\end{equation}
Let $\{\Theta^t\}_{t=0}^\infty$ be a sequence of iterates generated via \eqref{algo-pga} with an initial point $\Theta^0$ and $v\geq \max\{\frac{2\alpha_2-\mu}{4}, 2\alpha_3\}$. 
Then for any tolerance $\delta^*\geq\frac{8\xi}{1-\kappa}\bar{\epsilon}_{\emph{stat}}^2$ and any iteration $t\geq T(\delta^*)$, we have that
\begin{equation}\label{thm2-error}
\normm{\Theta^t-\hat{\Theta}}_\text{F}^2\leq \frac{4}{2\alpha_2-\mu}\left(\delta^*+\frac{{\delta^*}^2}{8\tau\omega^2}+4\tau\bar{\epsilon}_{\emph{stat}}^2\right),
\end{equation}
where
\begin{equation}\label{eq-Tdelta}
\begin{aligned}
T(\delta^*)&:=\log_2\log_2\left(\frac{\omega\lambda}{\delta^*}\right)\left(1+\frac{\log 2}{\log(1/\kappa)}\right) +\frac{\log((\obj(\Theta^0)-\obj(\hat{\Theta}))/\delta^*)}{\log(1/\kappa)},
\end{aligned}
\end{equation}
and $\bar{\epsilon}_{\emph{stat}}$, $\kappa$, $\xi$ are defined in
\eqref{bar-epsilon}-\eqref{lem-bound-xi}, respectively.
\end{Theorem}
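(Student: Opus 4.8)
The plan is to follow the now-standard template for geometric convergence of proximal-gradient methods under restricted strong convexity/smoothness from \cite{agarwal2012fast}, as adapted to nonconvex regularizers in \cite{loh2015regularized}, transferring all curvature information from $\loss_N$ onto the modified loss $\bar{\loss}_N=\loss_N+\Q_\lambda$. The first step is to record that $\bar{\loss}_N$ inherits quantitative RSC/RSM bounds: combining the algorithmic conditions \eqref{eq-alg-rsc}--\eqref{eq-alg-rsm} with \eqref{lem-qlambda-13}--\eqref{lem-qlambda-14} of Lemma \ref{lem-qlambda}, the modified Taylor error \eqref{taylor-modified} satisfies $\bar{\T}(\Theta,\Theta')\geq(\alpha_2-\mu/2)\normm{\Theta-\Theta'}_\text{F}^2-\tau_2\normm{\Theta-\Theta'}_*^2$ and $\bar{\T}(\Theta,\Theta')\leq\alpha_3\normm{\Theta-\Theta'}_\text{F}^2+\tau_3\normm{\Theta-\Theta'}_*^2$. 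This is why $2\alpha_2-\mu$, rather than $\alpha_2$, is the effective curvature constant throughout, why the hypothesis is $\alpha_2>\mu/2$, and why $v\geq\max\{(2\alpha_2-\mu)/4,2\alpha_3\}$ is the admissible step-size range.

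The argument then rests on three pillars. First, a \emph{cone lemma}: for any feasible $\Theta\in\Su$ with $\obj(\Theta)\leq\obj(\hat{\Theta})+\delta$, the error $\Delta=\Theta-\hat{\Theta}$ satisfies a bound of the form $\normm{\Delta}_*^2\leq c_1\lambda^{-q}R_q\normm{\Delta}_\text{F}^2+c_2(\bar{\epsilon}_{\text{stat}}^2+\epsilon(\delta)^2)$. This is obtained by applying the decomposition of Lemma \ref{lem-decom} to $\Delta$ relative to the effective rank $r=|K_\eta|$ with $\eta\asymp\lambda$, using the near-low-rank estimates \eqref{eq-thresh-bound} to control $\sum_{j>r}\sigma_j(\Theta^*)$, and using the first-order optimality of $\hat{\Theta}$ together with $\lambda\geq4\normm{\nabla\loss_N(\Theta^*)}_\text{op}$ and $\lambda\geq8\omega\tau$ to show the $J^c$-part of $\Delta$ cannot be large; the definition \eqref{bar-epsilon} of $\bar{\epsilon}_{\text{stat}}$ is precisely engineered so that $\normm{\hat{\Theta}-\Theta^*}_*$ gets absorbed into it. Second, a \emph{one-step contraction}: from the optimality defining $\Theta^{t+1}$ in \eqref{algo-pga} and the RSM bound on $\bar{\loss}_N$, a descent inequality $\obj(\Theta^{t+1})\leq\obj(\Theta^t)-\tfrac{v}{2}\normm{\Theta^{t+1}-\Theta^t}_\text{F}^2+\cdots$ holds (the projection step being handled as in \cite[Appendix C.1]{loh2015regularized}); combining it with the RSC bound on $\bar{\loss}_N$ and the cone lemma at level $\delta$ yields $\obj(\Theta^{t+1})-\obj(\hat{\Theta})\leq\kappa\,(\obj(\Theta^t)-\obj(\hat{\Theta}))+\xi\,\epsilon(\delta)^2$ on any phase where the current accuracy is $\delta$. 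Here the third threshold $\lambda\geq(256\tau R_q/(2\alpha_2-\mu))^{1/q}$ forces $512\tau\lambda^{-q}R_q/(2\alpha_2-\mu)<1/2$, which makes $\kappa\in(0,1)$ and $\xi$ finite, with $\kappa$ and $\xi$ taking exactly the forms \eqref{lem-bound-kappa}--\eqref{lem-bound-xi}.

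Third, an \emph{epoch/bootstrapping argument} converts these into the stated complexity. Starting from the crude bound $\delta_0=\obj(\Theta^0)-\obj(\hat{\Theta})$, for which $\epsilon(\delta_0)\leq2\omega$, iterate the contraction until the gap falls below some $\delta_1$; then re-run the cone lemma at the tighter level $\delta_1$, which improves the floor of the contraction because $\epsilon(\delta)=2\min\{\delta/\lambda,\omega\}$ decreases \emph{linearly} in $\delta$ once $\delta\leq\lambda\omega$, hence $\epsilon(\delta)^2$ decreases quadratically; repeat. Each halving of $\delta$ roughly squares the attainable accuracy, so reaching a target $\delta^*$ requires only $O(\log_2\log_2(\omega\lambda/\delta^*))$ phases, each costing $O(1/\log(1/\kappa))$ iterations for the geometric decay plus a logarithmic start-up term, giving \eqref{eq-Tdelta}. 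The process cannot be pushed below the statistical floor, which is why the hypothesis requires $\delta^*\geq\tfrac{8\xi}{1-\kappa}\bar{\epsilon}_{\text{stat}}^2$. Finally, for $t\geq T(\delta^*)$ one has $\obj(\Theta^t)-\obj(\hat{\Theta})\leq\delta^*$, and one last application of the RSC bound on $\bar{\loss}_N$ — namely $\tfrac{2\alpha_2-\mu}{2}\normm{\Theta^t-\hat{\Theta}}_\text{F}^2\leq\bar{\T}(\Theta^t,\hat{\Theta})+\tau\normm{\Theta^t-\hat{\Theta}}_*^2$, with the left quadratic form controlled by the objective gap $\delta^*$ plus a $\delta^{*2}/(8\tau\omega^2)$ correction coming from the prox/projection, and $\normm{\Theta^t-\hat{\Theta}}_*^2$ bounded via the cone lemma at this accuracy — delivers the Frobenius bound \eqref{thm2-error}.

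The main obstacle I anticipate is the bookkeeping in the epoch argument: verifying that the per-phase floor $\xi\,\epsilon(\delta_k)^2/(1-\kappa)$ really shrinks fast enough that a doubly-logarithmic number of phases suffices, that the cone lemma can be legitimately re-invoked at each new accuracy level (which needs feasibility of the iterates and propagation of $\obj(\Theta^t)\leq\obj(\hat{\Theta})+\delta_k$ to be maintained), and that all the dimension-free constants ($512$, $1024$, $8$, etc.) in \eqref{lem-bound-kappa}--\eqref{thm2-error} are mutually consistent with the three thresholds imposed on $\lambda$ and the lower bound on $v$. By contrast, the RSC/RSM transfer and the final conversion to Frobenius error are comparatively routine given Lemmas \ref{lem-qlambda} and \ref{lem-decom}.
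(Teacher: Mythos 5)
Your proposal follows essentially the same route as the paper: transfer of RSC/RSM to $\bar{\loss}_N$ via Lemma \ref{lem-qlambda}, a cone lemma (the paper's Lemma \ref{lem-cone}, proved relative to $\Theta^*$ and then transferred to $\Theta^t-\hat{\Theta}$ by triangle inequality, which is exactly the absorption of $\normm{\hat{\Theta}-\Theta^*}$ into $\bar{\epsilon}_{\text{stat}}$ you describe), the one-step contraction with the same $\kappa,\xi$ (Lemma \ref{lem-Tphi-bound}), the doubly-logarithmic epoch argument, and the final RSC-based conversion to the Frobenius bound. The plan is correct and matches the paper's proof in all essential respects.
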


\begin{Remark}\label{rmk-algo}
{\rm (i)} Due to the nonconvexity of \eqref{eq-esti}, it is usually difficult to obtain a global solution in general. However, Theorem \ref{thm-algo} shows that this is not a question under suitable conditions. Specifically, Theorem \ref{thm-algo} establishes the upper bound on the squared Frobenius norm between $\Theta^t$ at iteration $t$ and any global solution $\hat{\Theta}$. The iteration sequence can be easily computed via the proposed proximal gradient method, implying that the nonconvex estimator is solvable in practice. Moreover, Theorem \ref{thm-algo} guarantees that the proximal gradient method converges geometrically to a small neighborhood of all global optima. Noting from \eqref{thm2-error} that the optimization error $\normm{\Theta^t-\hat{\Theta}}_\emph{F}$ depends on the statistical error $\normm{\hat{\Theta}-\Theta^*}_\emph{F}$ ignoring constants, one sees that the converged point essentially behaves as well as any global solution of the nonconvex problem \eqref{eq-esti}, in the sense of statistical error. Hence, Theorem \ref{thm-algo} provides some prospect on nonconvex optimization problems that though global optima cannot be achieved generally, a near-global solution can still be computed via certain numerical algorithms and works as a good candidate under suitable regularity conditions.

{\rm (ii)} The geometric convergence rate is revealed by the number of required iterations $T(\delta^*)$ (cf. \eqref{eq-Tdelta}) in a logarithmic scale. It is worth noting that the convergence is not guaranteed to an arbitrary precision, but to an accuracy dependent on the parameters $\bar{\epsilon}_{\emph{stat}}$ (cf. \eqref{bar-epsilon}), $\kappa$ (cf. \eqref{lem-bound-kappa}) and $\xi$ (cf. \eqref{lem-bound-xi}). The quantity $\bar{\epsilon}_{\emph{stat}}$ (cf. \eqref{bar-epsilon}) consists of the statistical error $\normm{\hat{\Theta}-\Theta^*}_\emph{F}$ with some constants and the term $\lambda^{1-q}R_q$, and the latter one appears for the correction of the near low-rank assumption; see Remark \eqref{rmk-stat} (iv) for a detailed discussion. It has been pointed out in \cite{agarwal2012fast} that when solving optimization problems in statistical settings, there is no point to pursue a computational precision higher than the statistical precision. Therefore, the converged near-global solution is the best one we can expect from the point of statistical computing. 

{\rm (iii)} Theorem \ref{thm-algo} establishes the geometric convergence rate when $\Theta^*\in \B_q(R_q)$ with  $q\in [0,1]$ and suggests some important differences between the exact low-rank and the near low-rank assumptions, a finding which has also been discovered in \cite{agarwal2012fast,li2020sparse,Li2024LowrankME}. Concretely, under the exact low-rank assumption (i.e., $q=0$), the parameter $\bar{\epsilon}_\emph{stat}$ (cf. \eqref{bar-epsilon}) actually does not involve the second term $\lambda^{1-q}R_q$ (To see this, turn to the proof of Lemma \ref{lem-cone} (cf. inequality (\eqref{lem-cone-9})) and note the fact that $\sum_{j=r+1}^d\sigma_j(\Theta^*)=0$. This case was not explicitly shown in order to make the compactness of the article.)  Thus in the exact low-rank case, the optimization error $\normm{\Theta^t-\hat{\Theta}}_\emph{F}$ only depends on the statistical error $\normm{\hat{\Theta}-\Theta^*}_\emph{F}$. In contrast, under the near low-rank assumption, the squared optimization error \eqref{thm2-error} also has an additional term $\lambda^{2-2q}R_q^2$ besides the squared statistical error $\normm{\hat{\Theta}-\Theta^*}_\emph{F}^2$ (To see this, take the square of \eqref{bar-epsilon} with some computation.) This additional term arises from the essence of the matrix $\ell_q$-ball (i.e., the statistical nonidentifiability), and it is no larger than $\normm{\hat{\Theta}-\Theta^*}_\text{F}^2$ with overwhelming probability.
\end{Remark}

Before the proof Theorem \ref{thm-algo}, we need two technical lemmas as follows.

\begin{Lemma}\label{lem-cone}
Suppose that the conditions of Theorem \ref{thm-algo} are satisfied, and that there exists a pair $(\delta, T)$ such that \eqref{lem-cone-De1} holds.
Then for any iteration $t\geq T$, it holds that
\begin{equation*}
\begin{aligned}
\normm{\Theta^t-\hat{\Theta}}_*&\leq 4\sqrt{2}\lambda^{-\frac{q}{2}}R_q^{\frac12}\normm{\Theta^t-\hat{\Theta}}_\text{F}+\bar{\epsilon}_{\emph{stat}}+\epsilon(\delta),
\end{aligned}
\end{equation*}
where $\bar{\epsilon}_{\emph{stat}}$ and $\epsilon(\delta)$ are defined respectively in \eqref{bar-epsilon} and \eqref{eq-epdel}.
\end{Lemma}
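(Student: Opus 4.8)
The plan is to establish a cone-type constraint on the error matrix $\Delta^t := \Theta^t - \hat{\Theta}$, following the standard route used in analyses of proximal gradient methods for nonconvex problems (as in \cite{agarwal2012fast,loh2015regularized}), but carrying along the extra near-low-rank correction term. The starting point is the descent/optimality inequality for the proximal update \eqref{algo-pga}: since $\Theta^{t+1}$ minimizes a surrogate that upper-bounds $\obj$ up to the RSM term, and since $\obj(\Theta^t)-\obj(\hat\Theta)\le\delta$ for $t\ge T$, one extracts an inequality controlling $\lambda(\normm{\Theta^t}_* - \normm{\hat\Theta}_*)$, or rather $\lambda$ times a nuclear-norm difference, in terms of $\delta$ and inner-product terms involving $\nabla\bar{\loss}_N$. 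The key algebraic identity is to write everything relative to $\Theta^*$: use $\Theta^t = \Theta^* + (\Theta^t - \Theta^*)$ and $\hat\Theta = \Theta^* + (\hat\Theta - \Theta^*)$, so that the nuclear-norm decomposition with respect to the subspaces $\Aa^r, \Bb^r$ (with $r = |K_\eta|$, $\eta$ chosen as in Theorem \ref{thm-stat}) and Lemma \ref{lem-decom}(ii) can be applied to split $\Delta^t$ into its projection onto a rank-$\le 2r$ subspace and the complement.

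First I would expand $\obj(\Theta^t) - \obj(\hat\Theta) = \bar\loss_N(\Theta^t) - \bar\loss_N(\hat\Theta) + \lambda(\normm{\Theta^t}_* - \normm{\hat\Theta}_*)$, then use convexity of $\bar\loss_N$'s first-order behavior combined with the algorithmic RSC condition \eqref{eq-alg-rsc} (applied through $\bar\T$ as in \eqref{taylor-modified} and the curvature bound \eqref{lem-qlambda-13} for $\Q_\lambda$) to lower-bound $\bar\loss_N(\Theta^t)-\bar\loss_N(\hat\Theta) - \inm{\nabla\bar\loss_N(\hat\Theta)}{\Delta^t}$. Since $\hat\Theta$ is a global minimizer on $\Su$, the first-order condition gives $\inm{\nabla\bar\loss_N(\hat\Theta) + \lambda\hat G}{\Theta^t - \hat\Theta}\ge 0$ for some $\hat G\in\partial\normm{\hat\Theta}_*$; combined with convexity of the nuclear norm this converts the gradient inner product into nuclear-norm terms. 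Bounding $\inm{\nabla\loss_N(\Theta^*)}{\Delta^t}\le\normm{\nabla\loss_N(\Theta^*)}_{\text{op}}\normm{\Delta^t}_*\le\frac{\lambda}{4}\normm{\Delta^t}_*$ via \eqref{eq-lambda-alg} and H\"older, and using $\delta$ to bound the residual objective gap, I expect to arrive at an inequality of the form $\normm{\Delta^t_{J^c}}_* \le c_1\normm{\Delta^t_J}_* + c_2\sum_{j=r+1}^d\sigma_j(\Theta^*) + c_3\min\{\delta/\lambda,\omega\}$ for explicit constants. Then $\normm{\Delta^t}_* \le \normm{\Delta^t_J}_* + \normm{\Delta^t_{J^c}}_*$, the rank bound $\normm{\Delta^t_J}_*\le\sqrt{2r}\,\normm{\Delta^t_J}_\text{F}\le\sqrt{2r}\,\normm{\Delta^t}_\text{F}$, the bound $r=|K_\eta|\le\eta^{-q}R_q = \lambda^{-q}(\alpha\text{-const})R_q$, and $\sum_{j=r+1}^d\sigma_j(\Theta^*) = \normm{\Theta^*_{K_\eta^c}}_*\le\eta^{1-q}R_q$ from \eqref{eq-thresh-bound}, together assemble into the claimed bound, with $\bar\epsilon_{\text{stat}}$ (cf. \eqref{bar-epsilon}) absorbing the $\normm{\hat\Theta-\Theta^*}_\text{F}$ and $\lambda^{1-q/2}R_q^{1/2}$ contributions and $\epsilon(\delta) = 2\min\{\delta/\lambda,\omega\}$ absorbing the $\delta$-term.

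The main obstacle I anticipate is the bookkeeping needed to route the $\delta$-dependent objective gap correctly through the proximal-step optimality inequality: one must show that $\normm{\Theta^t}_* - \normm{\hat\Theta}_*$ (or the relevant signed nuclear-norm combination) is controlled by $\delta/\lambda$ while simultaneously not losing the factor structure that produces the $\min\{\delta/\lambda,\omega\}$ truncation — the $\omega$ cap comes from the side constraint $\normm{\Theta^t}_*,\normm{\hat\Theta}_*\le\omega$ forcing $\normm{\Delta^t}_*\le 2\omega$ unconditionally. A secondary delicate point is handling the concave component $\Q_\lambda$: its contribution to $\bar\loss_N$ is nonconvex, so the curvature loss $-\frac{\mu}{2}\normm{\Delta^t}_\text{F}^2$ from \eqref{lem-qlambda-13} must be combined with the $\alpha_2\normm{\Delta^t}_\text{F}^2$ from algorithmic RSC, which is exactly why the hypothesis $\alpha_2 > \mu/2$ (equivalently $2\alpha_2 - \mu > 0$) appears and why $2\alpha-\mu$ shows up in the denominators of $\kappa$ and $\xi$; I would need to track this $\frac{2\alpha_2-\mu}{2}$ effective strong-convexity constant consistently so that the final constants $4\sqrt2$ and the definition of $\bar\epsilon_{\text{stat}}$ match. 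The condition $\lambda\ge(256\tau R_q/(2\alpha_2-\mu))^{1/q}$ in \eqref{eq-lambda-alg} will be used precisely to guarantee $512\tau\lambda^{-q}R_q/(2\alpha-\mu) < 1$ so that the cone inequality can be rearranged without sign reversal.
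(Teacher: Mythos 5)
Your plan to derive a cone-type inequality directly on $\Delta^t:=\Theta^t-\hat{\Theta}$ has a genuine gap, and it is not just bookkeeping. Two things block the direct route. First, the only low-rank structure available is that of $\Theta^*$: both the bound \eqref{eq-thresh-bound} on $\sum_{j=r+1}^d\sigma_j(\Theta^*)$ and the regularizer-difference bound of Lemma \ref{lem-decom}(ii) are anchored at $\Theta^*$, and the index set $J$ there is built from the SVD of $\Theta-\Theta^*$; nothing analogous is available for deviations from $\hat{\Theta}$, which is not assumed to lie in $\B_q(R_q)$. Second, if you compare $\Theta^t$ against $\hat{\Theta}$ directly, the first-order optimality condition $\inm{\nabla\bar{\loss}_N(\hat{\Theta})+\lambda\hat{G}}{\Delta^t}\geq 0$ together with convexity of $\normm{\cdot}_*$ makes the nuclear-norm terms cancel exactly: combining it with the RSC lower bound on $\bar{\T}(\Theta^t,\hat{\Theta})$ and $\obj(\Theta^t)-\obj(\hat{\Theta})\leq\delta$ leaves only $(\alpha_2-\mu/2)\normm{\Delta^t}_\text{F}^2-\tau_2\normm{\Delta^t}_*^2\leq\delta$, which contains no mechanism for producing the decomposition $\normm{\Delta^t_{J^c}}_*\lesssim\normm{\Delta^t_J}_*+\cdots$ with the crucial $\sqrt{2r}$ prefactor. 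Relatedly, the regularization condition \eqref{eq-lambda-alg} controls $\normm{\nabla\loss_N(\Theta^*)}_{\text{op}}$, not the gradient at $\hat{\Theta}$, so your step ``Bounding $\inm{\nabla\loss_N(\Theta^*)}{\Delta^t}\leq\frac{\lambda}{4}\normm{\Delta^t}_*$'' cannot be reached from an expansion centered at $\hat{\Theta}$.

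The paper resolves both obstructions by detouring through $\Theta^*$: it first proves a master inequality, namely that any $\Theta\in\Su$ with $\obj(\Theta)-\obj(\Theta^*)\leq\delta$ satisfies $\normm{\Theta-\Theta^*}_*\leq 4\sqrt{2}\lambda^{-q/2}R_q^{1/2}\normm{\Theta-\Theta^*}_\text{F}+4\lambda^{1-q}R_q+2\min\{\delta/\lambda,\omega\}$ (using only $\alpha_2>0$, H\"older, $\lambda\geq 4\normm{\nabla\loss_N(\Theta^*)}_{\text{op}}$, $\lambda\geq 8\omega\tau$, Lemma \ref{lem-decom}(ii) and \eqref{eq-thresh-bound} with $\eta=\lambda$), then observes that both $\hat{\Theta}$ (since $\obj(\hat{\Theta})\leq\obj(\Theta^*)$) and $\Theta^t$ (since $\obj(\Theta^t)\leq\obj(\hat{\Theta})+\delta\leq\obj(\Theta^*)+\delta$) qualify, and finishes with the triangle inequality. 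That last step is precisely where the term $\sqrt{2}\normm{\hat{\Theta}-\Theta^*}_\text{F}$ inside $\bar{\epsilon}_{\text{stat}}$ and the overall factor $8$ in \eqref{bar-epsilon} come from; your sketch offers no mechanism to generate them. A secondary point: the machinery you invoke ($\bar{\T}$, the curvature loss $\mu$, and the condition $\lambda\geq(256\tau R_q/(2\alpha_2-\mu))^{1/q}$) belongs to the proof of Lemma \ref{lem-Tphi-bound}, not to this lemma, which needs none of it.
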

\begin{proof}
We first prove that if $\lambda\geq 4\normm{\nabla\loss_N(\Theta^*)}_\text{op}$, then for any $\Theta\in \Su$ satisfying
\begin{equation}\label{lem-cone-De2}
\obj(\Theta)-\obj(\Theta^*)\leq \delta,
\end{equation}
it holds that
\begin{equation}\label{lem-cone-1}
\begin{aligned}
\normm{\Theta-\Theta^*}_*&\leq 4\sqrt{2}\lambda^{-\frac{q}{2}}R_q^{\frac12}\normm{\Theta-\Theta^*}_\text{F}+4\lambda^{1-q}R_q+2\min\left\{\frac{\delta}{\lambda},\omega\right\}.
\end{aligned}
\end{equation}
Set $\Delta:=\Theta-\Theta^*$. From \eqref{lem-cone-De2}, one has that
\begin{equation*}
\loss_N(\Theta^*+\Delta)+\regu_\lambda(\Theta^*+\Delta)\leq \loss_N(\Theta^*)+\regu_\lambda(\Theta^*)+\delta.
\end{equation*}
Then subtracting $\inm{\nabla\loss_N(\Theta^*)}{\Delta}$ from both sides of the former inequality, we obtain that
\begin{equation}\label{lem-cone-2}
\T(\Theta^*+\Delta,\Theta^*)+\regu_\lambda(\Theta^*+\Delta)-\regu_\lambda(\Theta^*)\leq -\inm{\nabla\loss_N(\Theta^*)}{\Delta}+\delta.
\end{equation}
We now claim that
\begin{equation}\label{lem-cone-claim}
\regu_\lambda(\Theta^*+\Delta)-\regu_\lambda(\Theta^*)\leq \frac{\lambda}{2}\normm{\Delta}_*+\delta.
\end{equation}
In fact, combining \eqref{lem-cone-2} with the RSC condition \eqref{eq-alg-rsc} and H{\"o}lder's inequality, one has that
\begin{equation*}
\alpha_2\normm{\Delta}_\text{F}^2-\tau_2\normm{\Delta}_*^2+\regu_\lambda(\Theta^*+\Delta)-\regu_\lambda(\Theta^*)\leq \normm{\nabla\loss_N(\Theta^*)}_{\text{op}}\normm{\Delta}_*+\delta.
\end{equation*}
This inequality, together with the assumption that $\lambda\geq 4\normm{\nabla\loss_N(\Theta^*)}_{\text{op}}$, implies that
\begin{equation*}
\alpha_2\normm{\Delta}_\text{F}^2-\tau_2\normm{\Delta}_*^2+\regu_\lambda(\Theta^*+\Delta)-\regu_\lambda(\Theta^*)\leq \frac{\lambda}{4}\normm{\Delta}_*+\delta.
\end{equation*}
Noting the facts that $\alpha_2>0$ and that $\normm{\Delta}_*\leq \normm{\Theta^*}_*+\normm{\Theta}_*\leq 2\omega$, one arrives at \eqref{lem-cone-claim} by combining the assumption $\lambda\geq 8\omega\tau_2$. Let $J$ denote the index set corresponding to the $2r$ largest singular values of $\Delta$ with $r$ to be chosen later and recall the definitions given in \eqref{eq-thetaj}. It then follows from Lemma \ref{lem-decom}(ii) that 
$\regu_\lambda(\Theta^*)-\regu_\lambda(\Theta)\leq \lambda(\normm{\Delta_J}_*-\normm{\Delta_{J^c}}_*)+2\lambda\sum_{j=r+1}^d\sigma_j(\Theta^*)$. Combining this inequality with \eqref{lem-cone-claim}, one has that $0\leq \frac{3\lambda}{2}\normm{\Delta_J}_*-\frac{\lambda}{2}\normm{\Delta_{J^c}}_*+2\lambda \sum_{j=r+1}^{d}\sigma_j(\Theta^*)+\delta$, and consequently, $\normm{\Delta_{J^c}}_*\leq 3\normm{\Delta_J}_*+4\sum_{j=r+1}^{d}\sigma_j(\Theta^*)+\frac{2\delta}{\lambda}$. Using the trivial bound $\normm{\Delta}_*\leq
2\omega$, one has that
\begin{equation}\label{lem-cone-9}
\normm{\Delta}_*\leq 4\sqrt{2r}\normm{\Delta}_\text{F}+4\sum_{j=r+1}^d\sigma_j(\Theta^*)+2\min\left\{\frac{\delta}{\lambda},\omega\right\}.
\end{equation}
Recall the set $K_\eta$ defined in \eqref{eq-thresh} and set $r=|K_\eta|$. Combining \eqref{lem-cone-9} with \eqref{eq-thresh-bound} and setting $\eta=\lambda$, we arrive at \eqref{lem-cone-1}. We now verify that \eqref{lem-cone-De2} is satiafied by the matrix $\hat{\Theta}$ and $\Theta^t$, respectively. Since $\hat{\Theta}$ is the optimal solution, it holds that $\obj(\hat{\Theta})\leq \obj(\Theta^*)$, and by assumption \eqref{lem-cone-De1}, it holds that $\obj(\Theta^t)\leq \obj(\hat{\Theta})+\delta\leq \obj(\Theta^*)+\delta$. Consequently, it follows from \eqref{lem-cone-1} that
\begin{equation*}
\begin{aligned}
\normm{\hat{\Theta}-\Theta^*}_*&\leq 4\sqrt{2}\lambda^{-\frac{q}{2}}R_q^{\frac12}\normm{\hat{\Theta}-\Theta^*}_\text{F}+4\lambda^{1-q}R_q,\\
\normm{\Theta^t-\Theta^*}_*&\leq 4\sqrt{2}\lambda^{-\frac{q}{2}}R_q^{\frac12}\normm{\Theta^t-\Theta^*}_\text{F}+4\lambda^{1-q}R_q+2\min\left\{\frac{\delta}{\lambda},\omega\right\}.
\end{aligned}
\end{equation*}
By the triangle inequality, we then arrive at that
\begin{equation*}
\begin{aligned}
\normm{\Theta^t-\hat{\Theta}}_*
&\leq \normm{\hat{\Theta}-\Theta^*}_*+\normm{\Theta^t-\Theta^*}_*\\
&\leq 4\sqrt{2}\lambda^{-\frac{q}{2}}R_q^{\frac12}\left(\normm{\hat{\Theta}-\Theta^*}_\text{F}+\normm{\Theta^t-\Theta^*}_\text{F}\right)+8\lambda^{1-q}R_q+2\min\left\{\frac{\delta}{\lambda},\omega\right\}\\
&\leq 4\sqrt{2}\lambda^{-\frac{q}{2}}R_q^{\frac12}\normm{\Theta^t-\hat{\Theta}}_\text{F}+\bar{\epsilon}_{\text{stat}}+\epsilon(\delta).
\end{aligned}
\end{equation*}
The proof is complete.
\end{proof}

\begin{Lemma}\label{lem-Tphi-bound}
Suppose that the conditions of Theorem \ref{thm-algo} are satisfied, and that there exists a pair $(\delta, T)$ such that \eqref{lem-cone-De1} holds.
Then for any iteration $t\geq T$, we have that
\begin{align}
&\bar{\T}(\hat{\Theta},\Theta^t)\geq -2\tau(\bar{\epsilon}_{\emph{stat}}+\epsilon(\delta))^2,\label{lem-bound-T1}\\
&\obj(\Theta^t)-\obj(\hat{\Theta})\geq \frac{2\alpha_2-\mu}{4}\normm{\Theta^t-\hat{\Theta}}_\text{F}^2-2\tau(\bar{\epsilon}_{\emph{stat}}+\epsilon(\delta))^2, \label{lem-bound-T2}\\
&\obj(\Theta^t)-\obj(\hat{\Theta})\leq \kappa^{t-T}(\obj(\Theta^T)-\obj(\hat{\Theta}))+\frac{2\xi}{1-\kappa}(\bar{\epsilon}_{\emph{stat}}^2+\epsilon^2(\delta)),\label{lem-bound-0}
\end{align}
where $\bar{\epsilon}_{\emph{stat}}$, $\epsilon(\delta)$, $\kappa$ and $\xi$ are defined in
\eqref{bar-epsilon}, \eqref{eq-epdel}, \eqref{lem-bound-kappa} and \eqref{lem-bound-xi}, respectively.
\end{Lemma}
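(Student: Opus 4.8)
The plan is to prove the three inequalities in the order stated, since \eqref{lem-bound-T1} feeds the derivation of \eqref{lem-bound-0} and \eqref{lem-bound-T2} controls the Frobenius terms appearing there, with \eqref{lem-bound-0} itself obtained from a one-step contraction by summing a geometric series. Throughout I write $\alpha:=\alpha_2$ and $\tau:=\max\{\tau_2,\tau_3\}$; the choice \eqref{eq-lambda-alg} forces $\lambda^{q}\ge 256\tau R_q/(2\alpha-\mu)$, so $64\tau\lambda^{-q}R_q\le\tfrac{2\alpha-\mu}{4}=\tfrac12(\alpha-\tfrac\mu2)$, and by \eqref{taylor-modified} the quantity $\bar\T(\Theta,\Theta')$ is exactly the first-order Taylor remainder of $\bar\loss_N=\loss_N+\Q_\lambda$.

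For \eqref{lem-bound-T1}, I would apply the algorithmic RSC bound \eqref{eq-alg-rsc} to $\T(\hat\Theta,\Theta^t)$ together with \eqref{lem-qlambda-13} applied to the $\Q_\lambda$-part of \eqref{taylor-modified}; adding them gives $\bar\T(\hat\Theta,\Theta^t)\ge(\alpha-\tfrac\mu2)\normm{\hat\Theta-\Theta^t}_\text{F}^2-\tau\normm{\hat\Theta-\Theta^t}_*^2$. Squaring the cone bound of Lemma \ref{lem-cone} (via $(x+y)^2\le 2x^2+2y^2$) gives $\normm{\hat\Theta-\Theta^t}_*^2\le 64\lambda^{-q}R_q\normm{\hat\Theta-\Theta^t}_\text{F}^2+2(\bar\epsilon_\text{stat}+\epsilon(\delta))^2$; substituting and using $64\tau\lambda^{-q}R_q\le\tfrac12(\alpha-\tfrac\mu2)$ leaves a nonnegative multiple of $\normm{\hat\Theta-\Theta^t}_\text{F}^2$ minus $2\tau(\bar\epsilon_\text{stat}+\epsilon(\delta))^2$, which is \eqref{lem-bound-T1}. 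For \eqref{lem-bound-T2}, since $\hat\Theta$ minimizes $\obj=\bar\loss_N+\lambda\normm{\cdot}_*$ over the convex set $\Su\ni\Theta^t$, the first-order optimality condition for this global minimizer and convexity of $\normm{\cdot}_*$ give, for some $\hat Z\in\partial\normm{\hat\Theta}_*$,
\[
\obj(\Theta^t)-\obj(\hat\Theta)\ \ge\ \inm{\nabla\bar\loss_N(\hat\Theta)+\lambda\hat Z}{\Theta^t-\hat\Theta}+\bar\T(\Theta^t,\hat\Theta)\ \ge\ \bar\T(\Theta^t,\hat\Theta);
\]
bounding $\bar\T(\Theta^t,\hat\Theta)$ from below by the argument just used, with $\hat\Theta$ and $\Theta^t$ interchanged, gives $\obj(\Theta^t)-\obj(\hat\Theta)\ge\tfrac{2\alpha-\mu}{4}\normm{\Theta^t-\hat\Theta}_\text{F}^2-2\tau(\bar\epsilon_\text{stat}+\epsilon(\delta))^2$, i.e.\ \eqref{lem-bound-T2}. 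Inverting it I also record $\normm{\Theta^t-\hat\Theta}_\text{F}^2\le\tfrac{4}{2\alpha-\mu}\big(\obj(\Theta^t)-\obj(\hat\Theta)+2\tau(\bar\epsilon_\text{stat}+\epsilon(\delta))^2\big)$, which I use repeatedly below.

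For \eqref{lem-bound-0}, write $D_t:=\obj(\Theta^t)-\obj(\hat\Theta)$; my target is the one-step contraction $D_{t+1}\le\kappa D_t+2\xi(\bar\epsilon_\text{stat}^2+\epsilon^2(\delta))$, since then $\sum_{k\ge0}\kappa^k=(1-\kappa)^{-1}$ yields \eqref{lem-bound-0}. First, from the RSM bound \eqref{eq-alg-rsm} for $\T$ and \eqref{lem-qlambda-14} (which forces $\bar\T\le\T$), $\bar\loss_N$ also satisfies an RSM inequality, so with $v\ge2\alpha_3$ I get $\obj(\Theta^{t+1})\le\bar\loss_N(\Theta^t)+g_t(\Theta^{t+1})+\tau\normm{\Theta^{t+1}-\Theta^t}_*^2$, where $g_t(\Theta):=\inm{\nabla\bar\loss_N(\Theta^t)}{\Theta-\Theta^t}+\tfrac v2\normm{\Theta-\Theta^t}_\text{F}^2+\lambda\normm{\Theta}_*$ is the proximal objective of \eqref{algo-pga} minimized by $\Theta^{t+1}$ over $\Su$. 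Since $v\ge\tfrac{2\alpha-\mu}{4}$, the point $a\hat\Theta+(1-a)\Theta^t$ with $a:=\tfrac{2\alpha-\mu}{8v}\in(0,\tfrac12]$ lies in $\Su$, so $g_t(\Theta^{t+1})\le g_t\big(a\hat\Theta+(1-a)\Theta^t\big)$; expanding, using convexity of $\normm{\cdot}_*$, rewriting $\inm{\nabla\bar\loss_N(\Theta^t)}{\hat\Theta-\Theta^t}=\bar\loss_N(\hat\Theta)-\bar\loss_N(\Theta^t)-\bar\T(\hat\Theta,\Theta^t)$ and invoking \eqref{lem-bound-T1} gives
\[
D_{t+1}\ \le\ (1-a)D_t+\tfrac{va^2}{2}\normm{\hat\Theta-\Theta^t}_\text{F}^2+\tau\normm{\Theta^{t+1}-\Theta^t}_*^2+2a\tau(\bar\epsilon_\text{stat}+\epsilon(\delta))^2.
\]
Finally I bound $\normm{\hat\Theta-\Theta^t}_\text{F}^2$ by the inverted \eqref{lem-bound-T2}, and $\normm{\Theta^{t+1}-\Theta^t}_*^2\le2\normm{\Theta^{t+1}-\hat\Theta}_*^2+2\normm{\hat\Theta-\Theta^t}_*^2$ by applying Lemma \ref{lem-cone} at iterations $t$ and $t+1$ (both $\ge T$) followed by the inverted \eqref{lem-bound-T2}; this converts every Frobenius term into a multiple of $D_t$ or $D_{t+1}$. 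Collecting terms, moving the $D_{t+1}$ contribution to the left, dividing through by the factor $1-\tfrac{512\tau\lambda^{-q}R_q}{2\alpha-\mu}$ that appears in \eqref{lem-bound-kappa}--\eqref{lem-bound-xi}, and folding $(\bar\epsilon_\text{stat}+\epsilon(\delta))^2\le2\bar\epsilon_\text{stat}^2+2\epsilon^2(\delta)$ then produces exactly $\kappa$ and $\xi$; iterating the contraction from $T$ to $t$ and summing the geometric series gives \eqref{lem-bound-0}.

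The hard part will be the bookkeeping in this last step: I have to pin down the weight $a$ and track every constant through the two substitutions — of \eqref{lem-bound-T2} into the Frobenius terms and of Lemma \ref{lem-cone}, used twice, into $\normm{\Theta^{t+1}-\Theta^t}_*$ — so that after moving the $D_{t+1}$ term to the left the contraction factor is exactly $\kappa$ and the residual is exactly $2\xi(\bar\epsilon_\text{stat}^2+\epsilon^2(\delta))$. The circular appearance of $D_t$ and $D_{t+1}$ on the two sides, together with the fact that $\epsilon(\delta)$ itself depends on the gap $\delta$, is what makes this accounting delicate; by contrast, the arguments for \eqref{lem-bound-T1} and \eqref{lem-bound-T2}, and all the structural inputs (RSC/RSM, Lemma \ref{lem-qlambda}, convexity of the nuclear norm, and the cone bound of Lemma \ref{lem-cone}), enter routinely.
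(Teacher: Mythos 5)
Your proposal is correct and follows essentially the same route as the paper's proof: the same RSC-plus-\eqref{lem-qlambda-13} lower bound on $\bar{\T}$ combined with the squared cone inequality from Lemma \ref{lem-cone} for \eqref{lem-bound-T1} and \eqref{lem-bound-T2}, and the same proximal comparison at $a\hat{\Theta}+(1-a)\Theta^t$ with the RSM step, the splitting $\normm{\Theta^{t+1}-\Theta^t}_*^2\leq 2\normm{\Delta^{t+1}}_*^2+2\normm{\Delta^t}_*^2$, and the one-step contraction summed geometrically for \eqref{lem-bound-0}. The only adjustment in the bookkeeping you flagged is the weight: it should be $a=\frac{2\alpha_2-\mu}{4v}$ rather than $\frac{2\alpha_2-\mu}{8v}$, since then $\frac{2va^2}{2\alpha_2-\mu}=\frac{a}{2}$ and the $\delta_t$-coefficient becomes exactly $1-\frac{2\alpha_2-\mu}{8v}+\frac{512\tau\lambda^{-q}R_q}{2\alpha_2-\mu}$, matching the stated $\kappa$ and $\xi$.
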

\begin{proof}
It follows from the RSC condition \eqref{eq-alg-rsc} and Lemma \ref{lem-qlambda} (cf. \eqref{lem-qlambda-13}) that 
\begin{equation}\label{lem-bound-1}
\bar{\T}(\hat{\Theta},\Theta^t)\geq \left(\alpha_2-\frac{\mu}{2}\right)\normm{\hat{\Theta}-\Theta^t}_\text{F}^2-\tau_2\normm{\hat{\Theta}-\Theta^t}_*^2.
\end{equation}
Combining Lemma \ref{lem-cone} and the assumption that
$\lambda\geq \left(\frac{256\tau R_q}{2\alpha_2-\mu}\right)^{1/q}$ yields that
\begin{equation*}
\bar{\T}(\hat{\Theta},\Theta^t)
\geq -2\tau(\bar{\epsilon}_{\text{stat}}+\epsilon(\delta))^2,
\end{equation*}
which establishes \eqref{lem-bound-T1}. Furthermore, by the convexity of $\normm{\cdot}_*$, one has that
\begin{equation}\label{lem-bound-2}
\lambda\normm{\Theta^t}_*-\lambda\normm{\hat{\Theta}}_*-\lambda\inm{\hat{G}}{\Theta^t-\hat{\Theta}} \geq 0,
\end{equation}
where $\hat{G}\in \partial\normm{\hat{\Theta}}_*$. 
It follows from the first-order optimality condition for $\hat{\Theta}$ that
\begin{equation}\label{lem-bound-3}
\inm{\nabla\obj(\hat{\Theta})}{\Theta^t-\hat{\Theta}}\geq 0.
\end{equation}
Combining \eqref{lem-bound-1}, \eqref{lem-bound-2} and \eqref{lem-bound-3}, we obtain that
\begin{equation*}
\obj(\Theta^t)-\obj(\hat{\Theta})\geq \left(\alpha_2-\frac{\mu}{2}\right)\normm{\Theta^t-\hat{\Theta}}_\text{F}^2-\tau_2\normm{\Theta^t-\hat{\Theta}}_*^2.
\end{equation*}
Then applying Lemma \ref{lem-cone} to bound the term $\normm{\Theta^t-\hat{\Theta}}_*^2$ and noting the assumption that $\lambda\geq \left(\frac{256\tau R_q}{2\alpha_2-\mu}\right)^{1/q}$,
we arrive at \eqref{lem-bound-T2}.
Now we turn to prove \eqref{lem-bound-0}. Define
\begin{equation*}
\obj_t(\Theta):=\bar{\loss}_N(\Theta^t)+\inm{\nabla{\bar{\loss}_N(\Theta^t)}}{\Theta-\Theta^t}+\frac{v}{2}\normm{\Theta-\Theta^t}_\text{F}^2+\lambda\normm{\Theta}_*,
\end{equation*}
which is the objective function minimized over the feasible region $\Su=\{\Theta\big|\normm{\Theta}_*\leq \omega\}$ at iteration $t$. For any $a\in [0,1]$, it is easy to see that the matrix $\Theta_a=a\hat{\Theta}+(1-a)\Theta^t$ belongs to $\Su$ due to the convexity of $\Su$. Since $\Theta^{t+1}$ is the optimal solution of the optimization problem \eqref{algo-pga}, one has that
\begin{equation*}
\begin{aligned}
\obj_t(\Theta^{t+1}) &\leq \obj_t(\Theta_a)=\bar{\loss}_N(\Theta^t)+\inm{\nabla{\bar{\loss}_N(\Theta^t)}}{\Theta_a-\Theta^t} +\frac{v}{2}\|\Theta_a-\Theta^t\|_2^2+\lambda\normm{\Theta_a}_*\\
&\leq \bar{\loss}_N(\Theta^t)+\inm{\nabla\bar{\loss}_N(\Theta^t)}{a\hat{\Theta}-a\Theta^t} +\frac{va^2}{2}\normm{\hat{\Theta}-\Theta^t}_\text{F}^2+a\lambda\normm{\hat{\Theta}}_*+(1-a)\lambda\normm{\Theta^t}_*,
\end{aligned}
\end{equation*}
where the last inequality is from the convexity of $\normm{\cdot}_*$.
Then by \eqref{lem-bound-T1}, one sees that
\begin{equation}\label{lem-bound-4}
\begin{aligned}
\obj_t(\Theta^{t+1})
&\leq (1-a)\bar{\loss}(\Theta^t)+a\bar{\loss}(\hat{\Theta})+2a\tau(\bar{\epsilon}_{\text{stat}}+\epsilon(\delta))^2\\
&\quad +\frac{va^2}{2}\normm{\hat{\Theta}-\Theta^t}_\text{F}^2+a\lambda\normm{\hat{\Theta}}_*+(1-a)\lambda\normm{\Theta^t}_*\\
&\leq \obj(\Theta^t)-a(\obj(\Theta^t)-\obj(\hat{\Theta}))+2\tau(\bar{\epsilon}_{\text{stat}}+\epsilon(\delta))^2+\frac{va^2}{2}\normm{\hat{\Theta}-\Theta^t}_\text{F}^2.
\end{aligned}
\end{equation}
It then follows from the RSM condition \eqref{eq-alg-rsm} and Lemma \ref{lem-qlambda} (cf. \eqref{lem-qlambda-14}) that
\begin{equation*}
\begin{aligned}
\bar{\T}(\Theta^{t+1},\Theta^t)&\leq \alpha_3\normm{\Theta^{t+1}-\Theta^t}_\text{F}^2+\tau_3\normm{\Theta^{t+1}-\Theta^t}_*^2\\
&\leq \frac{v}{2}\normm{\Theta^{t+1}-\Theta^t}_\text{F}^2+\tau\normm{\Theta^{t+1}-\Theta^t}_*^2,
\end{aligned}
\end{equation*}
where the second inequality is due to assumption $v\geq 2\alpha_3$. Adding $\lambda\normm{\Theta^{t+1}}_*$ to both sides of the former inequality, we obtain that
\begin{equation*}
\begin{aligned}
\obj(\Theta^{t+1})
&\leq \bar{\loss}(\Theta^t)+\inm{\nabla\bar{\loss}(\Theta^t)}{\Theta^{t+1}-\Theta^t} +\lambda\normm{\Theta^{t+1}}_*+\frac{v}{2}\normm{\Theta^{t+1}-\Theta^t}_\text{F}^2+\tau\normm{\Theta^{t+1}-\Theta^t}_*^2\\
&=
\obj_t(\Theta^{t+1})+\tau\normm{\Theta^{t+1}-\Theta^t}_*^2.
\end{aligned}
\end{equation*}
Combining this inequality with \eqref{lem-bound-4} yields that
\begin{equation}\label{lem-bound-5}
\obj(\Theta^{t+1})\leq \obj(\Theta^t)-a(\obj(\Theta^t)-\obj(\hat{\Theta}))+\frac{va^2}{2}\normm{\hat{\Theta}-\Theta^t}_\text{F}^2+\tau\normm{\Theta^{t+1}-\Theta^t}_*^2+2\tau(\bar{\epsilon}_{\text{stat}}+\epsilon(\delta))^2.
\end{equation}
Define $\Delta^t:=\Theta^t-\hat{\Theta}$. Then it follows directly that
$\normm{\Theta^{t+1}-\Theta^t}_*^2\leq (\normm{\Delta^{t+1}}_*+\normm{\Delta^t}_*)^2\leq 2\normm{\Delta^{t+1}}_*^2+2\normm{\Delta^t}_*^2$. Combining this inequality with \eqref{lem-bound-5}, one has that
\begin{equation*}
\obj(\Theta^{t+1})\leq \obj(\Theta^t)-a(\obj(\Theta^t)-\obj(\hat{\Theta}))+\frac{va^2}{2}\normm{\hat{\Theta}-\Theta^t}_\text{F}^2+2\tau(\normm{\Delta^{t+1}}_*^2+\normm{\Delta^t}_*^2)+2\tau(\bar{\epsilon}_{\text{stat}}+\epsilon(\delta))^2.
\end{equation*}
To simplify the notations, define $\psi:=\tau(\bar{\epsilon}_{\text{stat}}+\epsilon(\Delta))^2$, $\zeta:=\tau\lambda^{-q}R_q$ and $\delta_t:=\obj(\Theta^t)-\obj(\hat{\Theta})$. Using Lemma \ref{lem-cone} to bound the term $\normm{\Delta^{t+1}}_*^2$ and $\normm{\Delta^t}_*^2$, we arrive at that
\begin{equation}\label{lem-bound-6}
\begin{aligned}
\obj(\Theta^{t+1})
&\leq \obj(\Theta^t)-a(\obj(\Theta^t)-\obj(\hat{\Theta}))+\frac{va^2}{2}\normm{\Delta^t}_\text{F}^2+128\tau\lambda^{-q}R_q(\normm{\Delta^{t+1}}_\text{F}^2+\normm{\Delta^t}_\text{F}^2)+10\psi\\
&= \obj(\Theta^t)-a(\obj(\Theta^t)-\obj(\hat{\Theta}))+\left(\frac{va^2}{2}+128\zeta\right)\normm{\Delta^t}_\text{F}^2+128\zeta\normm{\Delta^{t+1}}_\text{F}^2+10\psi.
\end{aligned}
\end{equation}
Then subtracting $\obj(\hat{\Theta})$ from both sides of \eqref{lem-bound-6}, one has by \eqref{lem-bound-T2} that
\begin{equation*}
\begin{aligned}
\delta_{t+1}
&\leq (1-a)\delta_t+\frac{2va^2+512\zeta}{2\alpha_2-\mu}(\delta_t+2\psi) +\frac{512\zeta}{2\alpha_2-\mu}(\delta_{t+1}+2\psi)+10\psi.
\end{aligned}
\end{equation*}
Setting $a=\frac{2\alpha_2-\mu}{4v}\in (0,1)$, it follows from the former inequality that
\begin{equation*}
\begin{aligned}
\left(1-\frac{512\zeta}{2\alpha_2-\mu}\right)\delta_{t+1}&\leq \left(1-\frac{2\alpha_2-\mu}{8v}+\frac{512\zeta}{2\alpha_2-\mu}\right)\delta_t+2\left(\frac{2\alpha_2-\mu}{8v}+\frac{1024\zeta}{2\alpha_2-\mu}+5\right)\psi,
\end{aligned}
\end{equation*}
or equivalently, $\delta_{t+1}\leq \kappa\delta_t+\xi(\bar{\epsilon}_{\text{stat}}+\epsilon(\delta))^2$, where $\kappa$ and $\xi$ were previously defined in \eqref{lem-bound-kappa} and \eqref{lem-bound-xi}, respectively. Finally, it follows that
\begin{equation*}
\begin{aligned}
\delta_t &\leq \kappa^{t-T}\delta_T+\xi(\bar{\epsilon}_{\text{stat}}+\epsilon(\delta))^2(1+\kappa+\kappa^2+\cdots+\kappa^{t-T-1})\\
&\leq \kappa^{t-T}\delta_T+\frac{\xi}{1-\kappa}(\bar{\epsilon}_{\text{stat}}+\epsilon(\delta))^2\leq
\kappa^{t-T}\delta_T+\frac{2\xi}{1-\kappa}(\bar{\epsilon}_{\text{stat}}^2+\epsilon^2(\delta)).
\end{aligned}
\end{equation*}
The proof is complete.
\end{proof}

By virtue of the above lemmas, we are now ready to prove Theorem \ref{thm-algo}. 
\begin{proof}[Proof of Theorem \ref{thm-algo}]
We first prove the inequality as follows:
\begin{equation}\label{thm2-1}
\obj(\Theta^t)-\obj(\hat{\Theta})\leq \delta^*,\quad \forall t\geq T(\delta^*).
\end{equation}
First divide iterations $t=0,1,\cdots$ into a sequence of disjoint epochs $[T_k,T_{k+1}]$. Then define the associated sequence of tolerances $\delta_0>\delta_1>\cdots$ such that
\begin{equation*}
\obj(\Theta^t)-\obj(\hat{\Theta})\leq \delta_k,\quad \forall t\geq T_k,
\end{equation*}
and the corresponding error term $\epsilon_k:=2\min \left\{\frac{\delta_k}{\lambda},\omega\right\}$. The values of $\{(\delta_k,T_k)\}_{k\geq 1}$ will be determined later.
At the first iteration,
we use Lemma \ref{lem-Tphi-bound} (cf. \eqref{lem-bound-0}) with $\epsilon_0=2\omega$ and $T_0=0$
to obtain that
\begin{equation}\label{thm2-T0}
\obj(\Theta^t)-\obj(\hat{\Theta})\leq \kappa^t(\obj(\Theta^0)-\obj(\hat{\Theta}))+\frac{2\xi}{1-\kappa}(\bar{\epsilon}_{\text{stat}}^2 +4\omega^2),\quad \forall t\geq T_0.
\end{equation}
Set $\delta_1:=\frac{4\xi}{1-\kappa}(\bar{\epsilon}_{\text{stat}}^2 +4\omega^2)$. Noting that $\kappa\in (0,1)$ by assumption, it follows from \eqref{thm2-T0} that for $T_1:=\lceil \frac{\log (2\delta_0/\delta_1)}{\log (1/\kappa)} \rceil$,
\begin{equation*}
\begin{aligned}
\obj(\Theta^t)-\obj(\hat{\Theta})&\leq \frac{\delta_1}{2}+\frac{2\xi}{1-\kappa}\left(\bar{\epsilon}_{\text{stat}}^2 +4\omega^2\right)=\delta_1\leq \frac{8\xi}{1-\kappa}\max\left\{\bar{\epsilon}_{\text{stat}}^2,4\omega^2\right\},\quad \forall t\geq T_1.
\end{aligned}
\end{equation*}
For $k\geq 1$, define
\begin{equation}\label{thm2-DeltaT}
\delta_{k+1}:= \frac{4\xi}{1-\kappa}(\bar{\epsilon}_{\text{stat}}^2+\epsilon_k^2)
\quad \mbox{and}\quad
T_{k+1}:= \left\lceil \frac{\log (2\delta_k/\delta_{k+1})}{\log (1/\kappa)}+T_k \right\rceil.
\end{equation}
Then we use Lemma \ref{lem-Tphi-bound} (cf. \eqref{lem-bound-0}) to obtain that for all $t\geq T_k$,
\begin{equation*}
\obj(\Theta^t)-\obj(\hat{\Theta})\leq \kappa^{t-T_k}(\obj(\Theta^{T_k})-\obj(\hat{\Theta}))+\frac{2\xi}{1-\kappa}(\bar{\epsilon}_{\text{stat}}^2+\epsilon_k^2),
\end{equation*}
which further implies that
\begin{equation*}
\obj(\Theta^t)-\obj(\hat{\Theta})\leq \delta_{k+1}\leq \frac{8\xi}{1-\kappa}\max\{\bar{\epsilon}_{\text{stat}}^2,\epsilon_k^2\},\quad \forall t\geq T_{k+1}.
\end{equation*}
On the other hand, it follows from \eqref{thm2-DeltaT} that the recursion for $\{(\delta_k,T_k)\}_{k=0}^\infty$ is as follows
\begin{subequations}
\begin{align}
\delta_{k+1}&\leq \frac{8\xi}{1-\kappa}\max\{\epsilon_k^2,\bar{\epsilon}_{\text{stat}}^2\},\label{thm2-recur-Delta}\\
T_k&\leq k+\frac{\log (2^k\delta_0/\delta_k)}{\log (1/\kappa)}.\label{thm2-recur-T}
\end{align}
\end{subequations}
Then by \cite[Section 7.2]{agarwal2012supplementaryMF}, one has from \eqref{thm2-recur-Delta} that
\begin{equation}\label{thm2-recur2}
\delta_{k+1}\leq \frac{\delta_k}{4^{2^{k+1}}}\quad \mbox{and}\quad \frac{\delta_{k+1}}{\lambda}\leq \frac{\omega}{4^{2^k}},\quad \forall k\geq 1.
\end{equation}
Now let us show how to decide the smallest $k$ such that $\delta_k\leq \delta^*$ via \eqref{thm2-recur2}. If we are at the first epoch, \eqref{thm2-1} is clearly satisfied due to \eqref{thm2-recur-Delta}.
Otherwise, from \eqref{thm2-recur-T}, one sees that $\delta_k\leq \delta^*$ holds after at most $$k(\delta^*)\geq \frac{\log(\log(\omega\lambda/\delta^*)/\log 4)}{\log(2)}+1=\log_2\log_2(\omega\lambda/\delta^*)$$ epoches.
Combining the above bound on $k(\delta^*)$ with \eqref{thm2-recur-T}, we obtain
that $\obj(\Theta^t)-\obj(\hat{\Theta})\leq \delta^*$ holds for all iterations
\begin{equation*}
t\geq
\log_2\log_2\left(\frac{\omega\lambda}{\delta^*}\right)\left(1+\frac{\log 2}{\log(1/\kappa)}\right)+\frac{\log(\delta_0/\delta^*)}{\log(1/\kappa)},
\end{equation*}
from which \eqref{thm2-1} is established.
Finally, as \eqref{thm2-1} is proved, we have by \eqref{lem-bound-T2} in Lemma \ref{lem-Tphi-bound}
and assumption $\lambda\geq \left(\frac{256\tau R_q}{2\alpha_2-\mu}\right)^{1/q}$ that, for any $t\geq T(\delta^*)$,
\begin{equation*}
\left(\frac{2\alpha_2-\mu}{4}\right)\normm{\Theta^t-\hat{\Theta}}_\text{F}^2
\leq \obj(\Theta^t)-\obj(\hat{\Theta}) +2\tau\left(\epsilon(\delta^*)+\bar{\epsilon}_{\text{stat}}\right)^2
\leq \delta^*+2\tau\left(\frac{2\delta^*}{\lambda}+\bar{\epsilon}_{\text{stat}}\right)^2.
\end{equation*}
Hence, for any $t\geq T(\delta^*)$,
one has by \eqref{eq-lambda-alg} that
\begin{equation*}
\normm{\Theta^t-\hat{\Theta}}_\text{F}^2\leq \frac{4}{2\alpha_2-\mu}\left(\delta^*+\frac{{\delta^*}^2}{8\tau\omega^2}+4\tau\bar{\epsilon}_{\text{stat}}^2\right).
\end{equation*}
The proof is complete.
\end{proof}

\section{Errors-in-variables matrix regression}\label{sec-conse}


In this section, we investigate high-dimensional errors-in-variables matrix regression with nosiy observations as an application of the nonconvex regularized method. The matrix regression model has been first proposed in \cite{wainwright2014structured} and has been studied from both theoretical and applicational aspects in the last decade; see, e.g., \cite{alquier2013rank,candes2010power,negahban2011estimation,recht2010guaranteed} and references therein. Consider the following generic observation model
\begin{equation}\label{eq-matrix}
y_i=\langle\langle X_i,\Theta^* \rangle\rangle+\epsilon_i,\quad i=1,2,\cdots,N,
\end{equation}
where for two matrices $X_i, \Theta^*\in \R^{d_1\times d_2}$, the inner product is defined as $\langle\langle X_i,\Theta^* \rangle\rangle:=\text{trace}(\Theta^*X_i^\top)$, $\{y_i\}_{i=1}^N$ are observation values for the response variable, $\{X_i\}_{i=1}^N$ are covariate matrices, $\{\epsilon_i\}_{i=1}^N$ are observation noise, $\Theta^*$ is the unknown parameter matrix to be estimated. With operator-theoretic notation, model \eqref{eq-matrix} can be written more compactly as 
\begin{equation}\label{eq-matrix-op}
y=\X(\Theta^*)+\epsilon,
\end{equation}
where $y:=(y_1,y_2,\cdots,y_N)\top\in \R^N$ is the response vector, $\epsilon:=(\epsilon_1,\epsilon_2,\cdots,\epsilon_N)\top\in \R^N$, the operator $\X:\R^{d_1\times d_2}\to \R^N$ is defined by the covariate matrices $\{X_i\}_{i=1}^N$ as $[\X(\Theta^*)]_i=\langle\langle X_i,\Theta^* \rangle\rangle$.

The observations $\{(X_i,y_i)\}_{i=1}^N$ are assumed to be fully-observed in standard formulations. However, this assumption is not realistic in many applications, where the covariates may be observed with noise or miss values and one can only observe the pairs $\{(Z_i,y_i)\}_{i=1}^N$ instead, where $Z_i$'s are noisy observations of the corresponding true $X_i$'s. We mainly consider the following two types of errors:
\begin{enumerate}[(a)]
\item Additive noise: For each $i=1,2,\cdots,N$, one observes $Z_{i}=X_{i}+W_{i}$, where $W_{i}\in \R^{d_1\times d_2}$ is a random matrix.
\item Missing data: For each $i=1,2,\cdots,N$, one observes a random matrix $Z_{i}\in \R^{d_1\times d_2}$, such that for each $j=1,2\cdots,d_1$, $k=1,2\cdots,d_2$, $({Z_i})_{jk}=({X_i})_{jk}$ with probability $1-\rho$, and $({Z_i})_{jk}=*$ with probability $\rho$, where $\rho\in [0,1)$.
\end{enumerate}
Throughout this section, we impose a Gaussian-ensemble assumption on the errors-in-variables matrix regression model. Specifically, for a matrix $X\in \R^{d_1\times d_2}$, define $M=d_1\times d_2$ and let $\text{vec}(X)\in \R^M$ stand for the vectorized form of the matrix $X$. Then the $\Sigma$-ensemble is defined as follows.
\begin{Definition}\label{def-sig}
Let $\Sigma\in \R^{M\times M}$ be a symmetric positive definite matrix. Then a matrix $X$ is said to be observed form the $\Sigma$-ensemble if $\emph{vec}(X)\sim \N(0,\Sigma)$.
\end{Definition}
Then for $i=1,2,\cdots,N$, the true observed matrix $X_i$ and the noisy matrix $W_i$ are assumed to be drawn from the $\Sigma_x$-ensemble and the $\Sigma_w$-ensemble, repectively. Observation noise on the response variables, i.e., $\epsilon$, is assumed to obey the Gaussian distribution with $\epsilon\sim \N(0,\sigma_\epsilon^2\I_N)$.

In order to estimate the true parameter $\Theta^*$ in \ref{eq-matrix} with clean covariate data, it is natural to introduce the empirical loss function as 
\begin{equation}\label{eq-vec}
\loss_N(\Theta)=\frac{1}{2N}\|y-\X(\Theta)\|_2^2=\frac{1}{2N}\sum_{i=1}^N(y_i-\text{vec}(X_i)^\top\text{vec}(\Theta))^2.
\end{equation}
Then define $\XX=(\text{vec}(X_1),\text{vec}(X_2),\cdots,\text{vec}(X_N))^\top\in \R^{N\times M}$, and one sees that \eqref{eq-vec} can be written as
\begin{equation}\label{eq-vec}
\loss_N(\Theta)=\frac{1}{2N}\|y-\XX \text{vec}(\Theta)\|_2^2=\frac{1}{2N}\|y\|_2^2-\frac{1}{N}\langle \XX^\top y, \text{vec}(\Theta)\rangle+\frac{1}{2N}\langle \XX^\top \XX \text{vec}(\Theta), \text{vec}(\Theta)\rangle. 
\end{equation}
However, in the errors-in-variables case, the quantities $\frac{\XX^\top \XX}{N}$
and $\frac{\XX^\top y}{N}$ in \eqref{eq-vec} are both unknown, meaning that this loss function can not work. Nevertheless, this transformation still offers some heuristic to construct the loss function in the errors-in-variables case through the plug-in method proposed in \cite{loh2012high}. Specifically, given a set of observations, one way is to find suitable estimates of the quantities $\frac{\XX^\top \XX}{N}$ and $\frac{\XX^\top Y}{N}$ that are adapted to the cases of additive noise and/or missing data. 

Let $(\hat{\Gamma},\hat{\Upsilon})$ be estimators of $(\frac{\XX^\top \XX}{N},\frac{\XX^\top Y}{N})$. Then recalling the nonconvex estimation method proposed in \ref{eq-esti}, we obtain the following estimator in the errors-in-variables case
\begin{equation}\label{eq-esti-error}
\hat{\Theta} \in \argmin_{\Theta\in \Omega\subseteq \R^{d_1\times d_2}}\left\{\frac{1}{2}\langle \hat{\Gamma}\text{vec}(\Theta), \text{vec}(\Theta)\rangle-\langle \hat{\Upsilon}, \text{vec}(\Theta)\rangle+\regu_\lambda(\Theta)\right\},
\end{equation}

Define $\Z=(\text{vec}(Z_1),\text{vec}(Z_2),\cdots,\text{vec}(Z_N))^\top\in \R^{N\times M}$. For the specific additive noise and missing data cases, as discussed in \cite{loh2012high}, an unbiased choice of the pair $(\hat{\Gamma},\hat{\Upsilon})$ is given respectively by
\begin{align}
\hat{\Gamma}_{\text{add}}&:=\frac{\Z^\top \Z}{N}-\Sigma_w \quad \mbox{and}\quad \hat{\Upsilon}_{\text{add}}:=\frac{\Z^\top y}{N}, \label{sur-add}\\
\hat{\Gamma}_{\text{mis}}&:=\frac{\tilde{\Z}^\top \tilde{\Z}}{N}-\rho\cdot\text{diag}\left(\frac{\tilde{\Z}^\top \tilde{\Z}}{N}\right) \quad \mbox{and}\quad \hat{\Upsilon}_{\text{mis}}:=\frac{\tilde{\Z}^\top y}{N}\quad \left(\tilde{\Z}=\frac{\Z}{1-\rho}\right). \label{sur-mis}
\end{align}

Under the high-dimensional scenario $(N\ll M)$, the estimated matrices $\hat{\Gamma}_{\text{add}}$ and $\hat{\Gamma}_{\text{mis}}$ in \eqref{sur-add} and \eqref{sur-mis} are always negative definite; actually, both the matrices $\Z^{\top}\Z$ and $\tilde{\Z}^{\top} \tilde{\Z}$ have rank at most $N$, and then the positive definite matrices $\Sigma_{w}$ and $\rho\cdot \text{diag}\left(\frac{\tilde{\Z}^\top \tilde{\Z}}{N}\right)$ are subtracted to get the estimates $\hat{\Gamma}_{\text{add}}$ and $\hat{\Gamma}_{\text{mis}}$, respectively. Therefore, the above estimator \eqref{eq-esti-error} is based on solving a optimization problem consisting of a loss function and a regularizer which are both nonconvex and falls into the framework of this article.

In order to apply the statistical and computational results in the previous section, we need to verify that the RSC/RSM conditions \eqref{eq-sta-rsc}, \eqref{eq-alg-rsc} and \eqref{eq-alg-rsm} are satisfied by the errors-in-variables matrix regression model with high probability. In addition, noting from \eqref{eq-lambda-sta} and \eqref{eq-lambda-alg}, we shall also bound the term $\normm{\nabla \loss_N(\Theta^*)}_\text{op}$ to provide some insight in setting the regularization parameter. Specifically, Proposition \ref{prop-add} is for the additive noise case, while Proposition \ref{prop-mis} is for the missing data case.

To simplify the notations, we define
\begin{align}
\tau_{\text{add}}&:= \lambda_{\text{min}}(\Sigma_x)\max\left\{\frac{(\normm{\Sigma_x}_\text{op}^2+\normm{\Sigma_w}_\text{op}^2}{\lambda^2_{\text{min}}(\Sigma_x)},1\right\},\label{tau-add}\\
\tau_{\text{mis}}&:= \lambda_{\text{min}}(\Sigma_x)\max\left\{\frac{1}{(1-\rho)^4}\frac{\normm{\Sigma_x}_\text{op}^4}{\lambda^2_{\text{min}}(\Sigma_x)},1\right\},\label{tau-mis}\\
\varphi_{\text{add}}&:=(\normm{\Sigma_x}_\text{op}+\normm{\Sigma_x}_\text{op})(\normm{\Sigma_x}_\text{op}+\sigma_\epsilon)\normm{\Theta^*}_\text{F},\label{deviation-add}\\
\varphi_{\text{mis}}&:=\frac{\normm{\Sigma_x}_\text{op}}{1-\rho}\left(\frac{\normm{\Sigma_x}_\text{op}}{1-\rho}+\sigma_\epsilon\right)\normm{\Theta^*}_\text{F}.\label{deviation-mis}
\end{align}
Note that $\varphi_{\text{add}}$ and $\varphi_{\text{mis}}$ respectively appears as the surrogate error term for errors-in-variables matrix regression and plays a key role in bounding the quantity $\normm{\nabla\loss_N(\Theta^*)}_\text{op}$ in the requirement of the regularization parameter $\lambda$ (cf. \eqref{eq-lambda-sta} and \eqref{eq-lambda-alg}).

\begin{Proposition}\label{prop-add}
In the additive noise case, let $\tau_{\emph{add}}$ and $\varphi_{\emph{add}}$ be defined as in \eqref{tau-add} and \eqref{deviation-add}, respectively. Then the following conclusions are true:\\
\emph{(i)} there exist universal positive constants $(c_0,c_1,c_2)$ such that with probability at least $1-c_1\exp\left(-c_2N\min\left\{\frac{\lambda^2_{\emph{min}}(\Sigma_x)}{(\normm{\Sigma_x}_\text{op}^2+\normm{\Sigma_w}_\text{op}^2)^2},1\right\}\right)$, the \emph{RSC} condition \eqref{eq-sta-rsc} holds with parameters
\begin{equation}\label{add-staRSC-para}
\alpha_1=\frac{1}{2}\lambda_{\emph{min}}(\Sigma_x),\ \text{and}\ \tau_1=c_0\tau_{\emph{add}}\sqrt{d_1d_2}\frac{\log d_1+\log d_2}{N},
\end{equation}
the \emph{RSC} condition \eqref{eq-alg-rsc} holds with parameters
\begin{equation}\label{add-algRSC-para}
\alpha_2=\frac{1}{4}\lambda_{\emph{min}}(\Sigma_x),\ \text{and}\ \tau_2=c_0\tau_{\emph{add}}\sqrt{d_1d_2}\frac{\log d_1+\log d_2}{N},
\end{equation}
the \emph{RSM} condition \eqref{eq-alg-rsm} holds with parameters
\begin{equation}\label{add-algRSM-para}
\alpha_3=\frac{3}{4}\lambda_{\emph{max}}(\Sigma_x),\ \text{and}\ \tau_3=c_0\tau_{\emph{add}}\sqrt{d_1d_2}\frac{\log d_1+\log d_2}{N};
\end{equation}
\emph{(ii)} there exist universal positive constants $(c_3,c_4,c_5)$ such that
\begin{equation*}
\normm{\nabla\loss_N(\Theta^*)}_\emph{op}\leq c_3\varphi_{\emph{add}}\sqrt{\frac{\log d_1+\log d_2}{N}},
\end{equation*}
holds with probability at least $1-c_4\exp(-c_5d_1d_2)$.\end{Proposition}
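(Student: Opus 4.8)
The plan is to reduce everything to a single two–sided control of a Gaussian quadratic form and then exploit the fact that $\loss_N$ is quadratic. In the additive–noise model the loss \eqref{eq-esti-error} is $\loss_N(\Theta)=\tfrac12\langle\hat{\Gamma}_{\text{add}}\text{vec}(\Theta),\text{vec}(\Theta)\rangle-\langle\hat{\Upsilon}_{\text{add}},\text{vec}(\Theta)\rangle$, whose gradient is affine; hence, writing $\Delta=\Theta-\Theta'$,
\[
\inm{\nabla\loss_N(\Theta^*+\Delta)-\nabla\loss_N(\Theta^*)}{\Delta}=\text{vec}(\Delta)^\top\hat{\Gamma}_{\text{add}}\text{vec}(\Delta),\qquad \T(\Theta,\Theta')=\tfrac12\text{vec}(\Delta)^\top\hat{\Gamma}_{\text{add}}\text{vec}(\Delta).
\]
Thus all three conditions \eqref{eq-sta-rsc}, \eqref{eq-alg-rsc}, \eqref{eq-alg-rsm} follow from one uniform estimate on $\text{vec}(\Delta)^\top\hat{\Gamma}_{\text{add}}\text{vec}(\Delta)$. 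Since $\text{vec}(Z_i)=\text{vec}(X_i)+\text{vec}(W_i)\sim\N(0,\Sigma_x+\Sigma_w)$ by independence of the two $\Sigma$–ensembles, we have $\E[\hat{\Gamma}_{\text{add}}]=(\Sigma_x+\Sigma_w)-\Sigma_w=\Sigma_x$, so I would split
\[
\text{vec}(\Delta)^\top\hat{\Gamma}_{\text{add}}\text{vec}(\Delta)=\text{vec}(\Delta)^\top\Sigma_x\text{vec}(\Delta)+\text{vec}(\Delta)^\top(\hat{\Gamma}_{\text{add}}-\Sigma_x)\text{vec}(\Delta),
\]
and bound the deterministic term between $\lambda_{\min}(\Sigma_x)\normm{\Delta}_\text{F}^2$ and $\lambda_{\max}(\Sigma_x)\normm{\Delta}_\text{F}^2$.

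The heart of the argument is the uniform stochastic bound: with the probability stated in the Proposition, for every $\Delta\in\R^{d_1\times d_2}$,
\[
\Bigl|\text{vec}(\Delta)^\top(\hat{\Gamma}_{\text{add}}-\Sigma_x)\text{vec}(\Delta)\Bigr|=\Bigl|\tfrac1N\sum_{i=1}^N\inm{Z_i}{\Delta}^2-\E\,\inm{Z_i}{\Delta}^2\Bigr|\le \epsilon_1\normm{\Delta}_\text{F}^2+\epsilon_2\normm{\Delta}_*^2,
\]
with $\epsilon_1\le\tfrac12\lambda_{\min}(\Sigma_x)$ and $\epsilon_2=\tfrac12 c_0\tau_{\text{add}}\sqrt{d_1d_2}\,\tfrac{\log d_1+\log d_2}{N}$. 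I would establish this by: (a) for a fixed $\Delta$, noting $\inm{Z_i}{\Delta}\sim\N(0,\text{vec}(\Delta)^\top(\Sigma_x+\Sigma_w)\text{vec}(\Delta))$ with variance at most $\normm{\Sigma_x+\Sigma_w}_\text{op}\normm{\Delta}_\text{F}^2$, so $\inm{Z_i}{\Delta}^2$ is sub-exponential with parameters governed by $\normm{\Sigma_x}_\text{op}$ and $\normm{\Sigma_w}_\text{op}$, and applying a Bernstein inequality to the average over $i$; (b) passing from a fixed $\Delta$ to all $\Delta$ by a covering argument over the set of unit–Frobenius–norm matrices of controlled nuclear norm (equivalently, truncating $\Delta$ at a chosen rank and netting the low-rank piece, the net having cardinality $e^{c(d_1+d_2)}$ at fixed rank); and (c) a peeling/dyadic argument over the scales of $\normm{\Delta}_*/\normm{\Delta}_\text{F}$ that turns the pointwise bound into the claimed form valid for all $\Delta$ simultaneously. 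Tracking the sub-exponential parameters through (a)–(c) produces the factor $\tau_{\text{add}}$ of \eqref{tau-add}; the union bound over the net and over the $d_1d_2$ coordinates produces the $\sqrt{d_1d_2}$ and $\log d_1+\log d_2$ factors; and the sub-exponential (rather than sub-Gaussian) tail produces the $\min\{\cdot,1\}$ in the exponent of the probability. Combining with the deterministic term, choosing $N$ large enough to absorb $\epsilon_1$ into $\tfrac12\lambda_{\min}(\Sigma_x)$ for the two \emph{RSC} bounds and into $\tfrac12\lambda_{\max}(\Sigma_x)$ for the \emph{RSM} bound, and recalling the extra factor $\tfrac12$ carried by $\T$, reads off $\alpha_1=\tfrac12\lambda_{\min}(\Sigma_x)$, $\alpha_2=\tfrac14\lambda_{\min}(\Sigma_x)$, $\alpha_3=\tfrac34\lambda_{\max}(\Sigma_x)$ and the common $\tau_1=\tau_2=\tau_3=c_0\tau_{\text{add}}\sqrt{d_1d_2}\,\tfrac{\log d_1+\log d_2}{N}$, which is part (i).

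For part (ii), since $\nabla\loss_N(\Theta^*)=\hat{\Gamma}_{\text{add}}\text{vec}(\Theta^*)-\hat{\Upsilon}_{\text{add}}$ in vectorized form, I would substitute the model $y=\XX\text{vec}(\Theta^*)+\epsilon$ and $\Z=\XX+\W$ (with $\W$ the matrix whose $i$th row is $\text{vec}(W_i)^\top$), simplify $\tfrac1N\Z^\top\Z-\tfrac1N\Z^\top\XX=\tfrac1N\Z^\top\W=\tfrac1N\XX^\top\W+\tfrac1N\W^\top\W$, and obtain
\[
\nabla\loss_N(\Theta^*)=\frac{\XX^\top\W}{N}\text{vec}(\Theta^*)+\Bigl(\frac{\W^\top\W}{N}-\Sigma_w\Bigr)\text{vec}(\Theta^*)-\frac{\Z^\top\epsilon}{N},
\]
a sum of three mean-zero random vectors. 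Reshaping to $\R^{d_1\times d_2}$ and using $\normm{\cdot}_\text{op}=\sup_{\|u\|_2=\|v\|_2=1}\inm{uv^\top}{\cdot}$, for each term and each fixed pair $(u,v)$ the scalar $\inm{uv^\top}{\cdot}$ is an average of $N$ i.i.d. mean-zero sub-exponential variables — bilinear or quadratic forms in the Gaussians $X_i,W_i,\epsilon_i$ — to which Bernstein applies; a net of $S^{d_1-1}\times S^{d_2-1}$ together with a union bound upgrades this to the supremum, and collecting the scale parameters $\normm{\Sigma_x}_\text{op}$, $\normm{\Sigma_w}_\text{op}$, $\sigma_\epsilon$, $\normm{\Theta^*}_\text{F}$ into $\varphi_{\text{add}}$ (cf. \eqref{deviation-add}) gives the asserted bound, with $d_1+d_2$ absorbed into $d_1d_2$ in the failure probability.

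The main obstacle is steps (a)–(c) above: the uniform deviation bound for the centered quadratic form of $\hat{\Gamma}_{\text{add}}-\Sigma_x$ over the low-rank–structured set. Unlike the sparse-vector case, one cannot reduce everything to entrywise $\ell_\infty$ controls; the argument genuinely requires the spectral behaviour of Gaussian sample-covariance-type matrices, careful bookkeeping of the sub-exponential Orlicz norms so that the covariances enter only through $\tau_{\text{add}}$, and the peeling over nuclear-to-Frobenius ratios needed to produce a $\normm{\Delta}_*^2$ penalty whose coefficient vanishes as $N\to\infty$ while keeping the $\normm{\Delta}_\text{F}^2$ coefficient strictly below $\tfrac12\lambda_{\min}(\Sigma_x)$ — the last requirement being exactly what forces the sample-size threshold encoded in the $\min\{\cdot,1\}$ of the probability statement.
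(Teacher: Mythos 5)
Your overall strategy is sound and arrives at the right conclusions, but it takes a genuinely different route from the paper. The paper's proof is essentially a citation-and-translation argument: it observes that $X$ and $W$ are sub-Gaussian, invokes Lemmas 1 and 2 of Loh and Wainwright (2012) to get the two-sided deviation bound $\beta^\top\hat{\Gamma}_{\text{add}}\beta \gtrless c\,\lambda(\Sigma_x)\|\beta\|_2^2 \mp c_0\tau_{\text{add}}\frac{\log M}{N}\|\beta\|_1^2$ for vectors $\beta\in\R^M$ and the elementwise bound $\|\hat{\Gamma}_{\text{add}}\mathrm{vec}(\Theta^*)-\hat{\Upsilon}_{\text{add}}\|_\infty\lesssim \varphi_{\text{add}}\sqrt{\log M/N}$, and then converts $\ell_2\to$ Frobenius and $\ell_1\to$ nuclear via $\|\mathrm{vec}(\Delta)\|_1^2\le \sqrt{M}\normm{\Delta}_\text{F}^2\le\sqrt{M}\normm{\Delta}_*^2$, identifying $\normm{\nabla\loss_N(\Theta^*)}_\text{op}$ with the $\ell_\infty$ norm of its vectorization. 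You instead propose to prove the matrix-form concentration from scratch: Bernstein for the sub-exponential quadratic form at fixed $\Delta$, a covering of the low-rank-structured set, peeling over $\normm{\Delta}_*/\normm{\Delta}_\text{F}$ for part (i), and for part (ii) the three-term mean-zero decomposition of the gradient together with a net over $S^{d_1-1}\times S^{d_2-1}$ to control the operator norm directly. Your route is considerably more labor-intensive (and you correctly flag steps (a)--(c) as the hard core, which you only sketch), but it is the intrinsically "matrix" argument: the product-of-spheres net is what a genuine bound on $\normm{\cdot}_\text{op}$ actually requires, and the low-rank covering plus peeling is what produces a $\normm{\Delta}_*^2$ coefficient with the correct matrix-dimensional scaling, rather than inheriting it from the vector $\ell_1$ bound through a crude norm comparison. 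The paper's route buys brevity by outsourcing all the probability to the cited vector-case lemmas; yours buys transparency about where the factors $\tau_{\text{add}}$, $\sqrt{d_1d_2}$, and $\log d_1+\log d_2$ come from, at the cost of having to execute a nontrivial empirical-process argument that the paper never writes down.
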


\begin{proof}
\rm{(i)} Note that the matrices $X$ and $W$ are sub-Gaussian with parameters $(\Sigma_x,\normm{\Sigma_x}_\text{op}^2)$ and $(\Sigma_w,\normm{\Sigma_w}_\text{op}^2)$, respectively \cite{loh2012supplementaryMH}. Then \cite[Lemma 1]{loh2012high} is applicable to concluding that, there exist universal constants $(c_0,c_1,c_2)$ such that for any $\beta\in \R^M$
\begin{align}
\beta^\top\hat\Gamma_{\text{add}}\beta&\geq \frac{1}{2}\lambda_{\text{min}}(\Sigma_x)\|\beta\|_2^2-c_0\tau_{\text{add}}\frac{\log M}{N}\|\beta\|_1^2,\label{add-RSC1}\\
\beta^\top\hat\Gamma_{\text{add}}\beta&\leq \frac{3}{2}\lambda_{\text{max}}(\Sigma_x)\|\beta\|_2^2+c_0\tau_{\text{add}}\frac{\log M}{N}\|\beta\|_1^2.\label{add-RSM}
\end{align}
with probability at least $1-c_1\exp\left(-c_2N\min\left\{\frac{\lambda^2_{\text{min}}(\Sigma_x)}{(\normm{\Sigma_x}_\text{op}^2+\normm{\Sigma_w}_\text{op}^2)^2},1\right\}\right)$. On the other hand, for any $\Theta,\Theta'\in \R^{d_1\times d_2}$, note that $\T(\Theta,\Theta')=\frac{1}{2}(\text{vec}(\Theta)-\text{vec}(\Theta')^\top\hat\Gamma_{\text{add}}(\text{vec}(\Theta)-\text{vec}(\Theta'))$. Then with $\text{vec}(\Theta)-\text{vec}(\Theta')$ in place of $\beta$ and noting the facts that $\|\text{vec}(\Theta)-\text{vec}(\Theta')\|_2^2=\normm{\Theta-\Theta'}_\text{F}^2$ and that $\|\text{vec}(\Theta)-\text{vec}(\Theta')\|_1^2\leq \sqrt{M}\|\text{vec}(\Theta)-\text{vec}(\Theta')\|_2^2=\sqrt{M}\normm{\Theta-\Theta'}_\text{F}^2\leq \sqrt{M}\normm{\Theta-\Theta'}_*^2$, we obtain that the RSC condition \eqref{eq-alg-rsc} and the RSM condition \eqref{eq-alg-rsm} hold with high probability with parameters given by \eqref{add-algRSC-para} and \eqref{add-algRSM-para}, respectively. 

Then let $\Theta^*$ be the true underlying parameter (i.e., $\Theta^*$ satisfies \eqref{eq-matrix-op}). Fix $\Delta\in \R^{d_1\times d_2}$ such that $\Theta^*+\Delta \in \Su$. Since in the case of additive noise, it follows that $\inm{\nabla\loss_N(\Theta^*+\Delta)-\nabla\loss_N(\Theta^*)}{\Delta}=\text{vec}(\Delta)^\top\hat\Gamma_{\text{add}}\text{vec}(\Delta)$.
Now with $\text{vec}(\Delta)$ in palce of $\beta$ and noting the facts that $\|\text{vec}(\Delta)\|_2^2=\normm{\Delta}_\text{F}^2$ and that $\|\text{vec}(\Delta)\|_1^2\leq \sqrt{M}\|\text{vec}(\Delta)\|_1^2=\sqrt{M}\normm{\Delta}_\text{F}^2\leq \sqrt{M}\normm{\Delta}_*^2$, one has that \eqref{add-RSC1} implies that the RSC condition \eqref{eq-sta-rsc} hold with high probability with parameters given by \eqref{add-staRSC-para}.

\rm{(ii)} Since 
\begin{equation*}
\normm{\nabla\loss_N(\Theta^*)}_\text{op}=\|\hat{\Gamma}_{\text{add}}\text{vec}(\Theta^*)-\hat{\Upsilon}_{\text{add}}\|_\infty,
\end{equation*}
\cite[Lemma 2]{loh2012high} is applicable to concluding that, there exist universal positive constants $(c_3,c_4,c_5)$ such that
\begin{equation*}
\normm{\nabla\loss_n(\Theta^*)}_\text{op}\leq c_3\varphi_{\text{add}}\sqrt{\frac{\log M}{N}},
\end{equation*}
holds with probability at least $1-c_4\exp(-c_5M)$.

The proof is complete.
\end{proof}

The proof of Proposition \ref{prop-mis} is similar to that of Proposition \ref{prop-add} with the modification that \cite[Lemma 3 and Lemma 4]{loh2012high} is applied instead of \cite[Lemma 1 and Lemma 2]{loh2012high}, and so is omitted.

\begin{Proposition}\label{prop-mis}
In the missing data case,  let $\tau_{\emph{mis}}$ and $\varphi_{\text{mis}}$ be defined as in \eqref{tau-mis} and \eqref{deviation-mis}, respectively. Then the following conclusions are true:\\
\emph{(i)} there exist universal positive constants $(c_0,c_1,c_2)$ such that with probability at least $1-c_1\exp\left(-c_2N\min\left\{(1-\rho)^4\frac{\lambda^2_{\emph{min}}(\Sigma_x)}{\normm{\Sigma_x}_\text{op}^4},1\right\}\right)$, the \emph{RSC} condition \eqref{eq-sta-rsc} holds with parameters
\begin{equation*}
\alpha_1=\frac{1}{2}\lambda_{\emph{min}}(\Sigma_x),\ \text{and}\ \tau_1=c_0\tau_{\emph{mis}}\sqrt{d_1d_2}\frac{\log d_1+\log d_2}{N},
\end{equation*}
the \emph{RSC} condition \eqref{eq-alg-rsc} holds with parameters
\begin{equation*}
\alpha_2=\frac{1}{4}\lambda_{\emph{min}}(\Sigma_x),\ \text{and}\ \tau_2=c_0\tau_{\emph{mis}}\sqrt{d_1d_2}\frac{\log d_1+\log d_2}{N},
\end{equation*}
the \emph{RSM} condition \eqref{eq-alg-rsm} holds with parameters
\begin{equation*}
\alpha_3=\frac{3}{4}\lambda_{\emph{max}}(\Sigma_x),\ \text{and}\ \tau_3=c_0\tau_{\emph{mis}}\sqrt{d_1d_2}\frac{\log d_1+\log d_2}{N};
\end{equation*}
\emph{(ii)} there exist universal positive constants $(c_3,c_4,c_5)$ such that
\begin{equation*}
\normm{\nabla\loss_N(\Theta^*)}_\emph{op}\leq c_3\varphi_{\emph{mis}}\sqrt{\frac{\log d_1+\log d_2}{N}},
\end{equation*}
holds with probability at least $1-c_4\exp(-c_5d_1d_2)$.
\end{Proposition}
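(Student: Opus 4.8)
The plan is to follow the proof of Proposition \ref{prop-add} essentially verbatim, replacing the additive-noise surrogates $(\hat{\Gamma}_{\text{add}},\hat{\Upsilon}_{\text{add}})$ by the missing-data surrogates $(\hat{\Gamma}_{\text{mis}},\hat{\Upsilon}_{\text{mis}})$ of \eqref{sur-mis} and invoking \cite[Lemma 3 and Lemma 4]{loh2012high} in place of \cite[Lemma 1 and Lemma 2]{loh2012high}. First I would record the structural fact that collapses the three regularity conditions into one: the error-corrected loss in \eqref{eq-esti-error} is a quadratic form in $\text{vec}(\Theta)$ with constant Hessian $\hat{\Gamma}_{\text{mis}}$, so that $\T(\Theta,\Theta')=\frac{1}{2}(\text{vec}(\Theta)-\text{vec}(\Theta'))^\top\hat{\Gamma}_{\text{mis}}(\text{vec}(\Theta)-\text{vec}(\Theta'))$ for all $\Theta,\Theta'$, and $\inm{\nabla\loss_N(\Theta^*+\Delta)-\nabla\loss_N(\Theta^*)}{\Delta}=\text{vec}(\Delta)^\top\hat{\Gamma}_{\text{mis}}\text{vec}(\Delta)$. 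Hence it suffices to control $\beta\mapsto\beta^\top\hat{\Gamma}_{\text{mis}}\beta$ from above and below.

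For this I would verify that the rescaled masked design $\tilde{\Z}=\Z/(1-\rho)$ is row-wise sub-Gaussian — which holds since $X$ is drawn from the $\Sigma_x$-ensemble and the coordinatewise masking is independent of $X$ — so that \cite[Lemma 3]{loh2012high} applies to $\hat{\Gamma}_{\text{mis}}$ and yields, with probability at least $1-c_1\exp(-c_2N\min\{(1-\rho)^4\lambda_{\text{min}}^2(\Sigma_x)/\normm{\Sigma_x}_\text{op}^4,1\})$,
\[
\frac{1}{2}\lambda_{\text{min}}(\Sigma_x)\|\beta\|_2^2-c_0\tau_{\text{mis}}\frac{\log M}{N}\|\beta\|_1^2\ \le\ \beta^\top\hat{\Gamma}_{\text{mis}}\beta\ \le\ \frac{3}{2}\lambda_{\text{max}}(\Sigma_x)\|\beta\|_2^2+c_0\tau_{\text{mis}}\frac{\log M}{N}\|\beta\|_1^2,\quad\forall\,\beta\in\R^M,
\]
with $M=d_1d_2$ and $\tau_{\text{mis}}$ as in \eqref{tau-mis}. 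Then I would specialize $\beta=\text{vec}(\Delta)$ and use $\|\text{vec}(\Delta)\|_2^2=\normm{\Delta}_\text{F}^2$, $\|\text{vec}(\Delta)\|_1^2\le\sqrt{M}\normm{\Delta}_*^2$ (exactly the bookkeeping used in the proof of Proposition \ref{prop-add}), together with $\log M=\log d_1+\log d_2$: the lower bound with $\Delta=\Theta-\Theta^*$ gives the statistical RSC \eqref{eq-sta-rsc} with $\alpha_1=\frac{1}{2}\lambda_{\text{min}}(\Sigma_x)$; the lower bound with $\beta=\text{vec}(\Theta-\Theta')$, halved, gives the algorithmic RSC \eqref{eq-alg-rsc} with $\alpha_2=\frac{1}{4}\lambda_{\text{min}}(\Sigma_x)$; and the upper bound gives the RSM \eqref{eq-alg-rsm} with $\alpha_3=\frac{3}{4}\lambda_{\text{max}}(\Sigma_x)$, in every case with tolerance $\tau_i=c_0\tau_{\text{mis}}\sqrt{d_1d_2}(\log d_1+\log d_2)/N$. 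This establishes part~(i).

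For part~(ii), I would note that $\nabla\loss_N(\Theta^*)$, after reshaping, equals $\hat{\Gamma}_{\text{mis}}\text{vec}(\Theta^*)-\hat{\Upsilon}_{\text{mis}}$, which is an unbiased surrogate for $\frac{1}{N}\XX^\top(\X(\Theta^*)-y)$ and hence concentrates around zero; applying \cite[Lemma 4]{loh2012high} then gives $\normm{\nabla\loss_N(\Theta^*)}_\text{op}\le c_3\varphi_{\text{mis}}\sqrt{\log M/N}$ with probability at least $1-c_4\exp(-c_5M)$, which is the claimed bound once $M=d_1d_2$ and $\varphi_{\text{mis}}$ from \eqref{deviation-mis} are substituted. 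The only genuinely nontrivial ingredients are the two concentration estimates \cite[Lemma 3 and Lemma 4]{loh2012high}; once these are granted, everything else is the same deterministic vector-to-matrix translation as in Proposition \ref{prop-add}, so the main (and essentially only) obstacle is verifying that the masked Gaussian design meets the sub-Gaussian hypotheses of those lemmas with the constants that surface in $\tau_{\text{mis}}$ and in the stated probability bound.
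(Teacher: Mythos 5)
Your proposal is correct and is exactly the argument the paper intends: the paper's own proof of Proposition \ref{prop-mis} is simply the statement that one repeats the proof of Proposition \ref{prop-add} with $(\hat{\Gamma}_{\text{mis}},\hat{\Upsilon}_{\text{mis}})$ in place of $(\hat{\Gamma}_{\text{add}},\hat{\Upsilon}_{\text{add}})$ and \cite[Lemma 3 and Lemma 4]{loh2012high} in place of \cite[Lemma 1 and Lemma 2]{loh2012high}, which is precisely what you do. Your write-up in fact supplies the quadratic-form identity and the vector-to-matrix norm bookkeeping explicitly, which the paper leaves implicit by deferring to the additive-noise case.
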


\section{Numerical experiments}\label{sec-num}

\section{Conclusion}\label{sec-con}

In this work, we proposed a unified framework for low-rank estimation problem, which can be used to analyse high-dimensional errors-in-variables matrix regression. A general estimator consisting a loss function and a low-rank induced regularizer which are both nonconvex was constructed. Then we provided statistical and computational guarantees for the nonconvex estimator. In the statistical aspect, recovery bounds for any stationary point are established to achieve the statistical consistency. In the computational aspect, the proximal gradient algorithm was applied to solve the nonconvex  optimization problem and was proved to converge geometrically to a near-global solution. Probabilistic discussions on the required regularity conditions were obtained for specific errors-in-variables matrix models. Theoretical results were illustrated by several numerical experiments on errors-in-variables matrix regression models with additive noise and missing data.


\bibliographystyle{plain}
\bibliography{./matrixEIV-ref}

\begin{thebibliography}{10}

\bibitem{agarwal2012fast}
A.~Agarwal, S.~Negahban, and M.~J. Wainwright.
\newblock Fast global convergence of gradient methods for high-dimensional
  statistical recovery.
\newblock {\em Ann. Stat.}, 40(5):2452--2482, 2012.

\bibitem{agarwal2012supplementaryMF}
A.~Agarwal, S.~Negahban, and M.~J. Wainwright.
\newblock Supplementary {Material}: {Fast} global convergence of gradient
  methods for high-dimensional statistical recovery.
\newblock 2012.

\bibitem{alquier2013rank}
P.~Alquier, C.~Butucea, M.~Hebiri, K.~Meziani, and T.~Morimae.
\newblock Rank-penalized estimation of a quantum system.
\newblock {\em Phys. Rev. A}, 88(3):032113, 2013.

\bibitem{belloni2016an}
A.~Belloni, M.~Rosenbaum, and A.~B. Tsybakov.
\newblock An $\ell_1, \ell_2, \ell_\infty$-regularization approach to
  high-dimensional errors-in-variables models.
\newblock {\em Electron. J. Stat.}, 10(2):1729--1750, 2016.

\bibitem{belloni2017linear}
A.~Belloni, M.~Rosenbaum, and A.~B. Tsybakov.
\newblock Linear and conic programming estimators in high dimensional
  errors-in-variables models.
\newblock {\em J. Royal Stat. Soc. B}, 79(3):939--956, 2017.

\bibitem{bickel1987efficient}
P.~J. Bickel and Y.~Ritov.
\newblock Efficient estimation in the errors in variables model.
\newblock {\em Ann. Stat.}, 15(2):513--540, 1987.

\bibitem{brown2019meboost}
B.~Brown, T.~Weaver, and J.~Wolfson.
\newblock {MEBoost}: {Variable} selection in the presence of measurement error.
\newblock {\em Stat. Med.}, 38(15):2705--2718, 2019.

\bibitem{candes2007the}
E.~J. Cand{\`e}s and T.~Tao.
\newblock The {Dantzig} selector: {Statistical} estimation when $p$ is much
  larger than $n$.
\newblock {\em Ann. Stat.}, 35(6):2313--2351, 2007.

\bibitem{Cands2009ThePO}
E.~J. Cand{\`e}s and T~Tao.
\newblock The power of convex relaxation: {Near}-optimal matrix completion.
\newblock {\em IEEE Transactions on Information Theory}, 56:2053--2080, 2009.

\bibitem{candes2010power}
E.~J. Cand{\`e}s and T.~Tao.
\newblock The power of convex relaxation: {Near-optimal} matrix completion.
\newblock {\em IEEE Trans. Inf. Theory}, 56(5):2053--2080, 2010.

\bibitem{carroll2006measurement}
R.~J. Carroll, D.~Ruppert, C.~M. Crainiceanu, and L.~A. Stefanski.
\newblock {\em Measurement Error in Nonlinear Models: A Modern Perspective}.
\newblock CRC Press, Boca Raton, 2006.

\bibitem{chen2013noisy}
Y.~D. Chen and C.~Caramanis.
\newblock Noisy and missing data regression: {Distribution}-oblivious support
  recovery.
\newblock In {\em International Conference on Machine Learning}, pages
  383--391, 2013.

\bibitem{datta2017cocolasso}
A.~Datta and H.~Zou.
\newblock Cocolasso for high-dimensional error-in-variables regression.
\newblock {\em Ann. Stat.}, 45(6):2400--2426, 2017.

\bibitem{fan2001variable}
J.~Q. Fan and R.~Z. Li.
\newblock Variable selection via nonconcave penalized likelihood and its oracle
  properties.
\newblock {\em J. Am. Stat. Assoc.}, 96(456):1348--1360, 2001.

\bibitem{Fazel2001ARM}
M.~Fazel, H.~A. Hindi, and S.~P. Boyd.
\newblock A rank minimization heuristic with application to minimum order
  system approximation.
\newblock {\em Proceedings of the 2001 American Control Conference. (Cat.
  No.01CH37148)}, 6:4734--4739, 2001.

\bibitem{gui2015towards}
H.~Gui, J.~W. Han, and Q.~Q. Gu.
\newblock Towards faster rates and oracle property for low-rank matrix
  estimation.
\newblock In {\em Proc. ICML}, 2015.

\bibitem{li2021inference}
M.~Y. Li, R.~Z. Li, and Y.~Y. Ma.
\newblock Inference in high dimensional linear measurement error models.
\newblock {\em J. Multivar. Anal.}, page 104759, 2021.

\bibitem{li2023Lowrank}
X.~Li and D.~Y. Wu.
\newblock Low-rank matrix estimation via nonconvex optimization methods in
  multi-response errors-in-variables regression.
\newblock {\em J. Global Optim.}, 2023.

\bibitem{Li2024LowrankME}
X.~Li and D.~Y. Wu.
\newblock Low-rank matrix estimation via nonconvex optimization methods in
  multi-response errors-in-variables regression.
\newblock {\em J. Glob. Optim.}, 88:79--114, 2024.

\bibitem{li2020sparse}
X.~Li, D.~Y. Wu, C.~Li, J.~H. Wang, and J.-C. Yao.
\newblock Sparse recovery via nonconvex regularized {M}-estimators over
  $\ell_q$-balls.
\newblock {\em Comput. Stat. Data Anal.}, 152:107047, 2020.

\bibitem{loh2013local}
P.-L. Loh.
\newblock Local optima of nonconvex regularized {M}-estimators.
\newblock {Dept. Elect. Eng. Comput. Sci.}, UC Berkeley, Berkeley, 2013.

\bibitem{loh2012high}
P.-L. Loh and M.~J. Wainwright.
\newblock High-dimensional regression with noisy and missing data: Provable
  guarantees with nonconvexity.
\newblock {\em Ann. Stat.}, 40(3):1637--1664, 2012.

\bibitem{loh2012supplementaryMH}
P.-L. Loh and M.~J. Wainwright.
\newblock Supplementary material: {High}-dimensional regression with noisy and
  missing data: {Provable} guarantees with nonconvexity.
\newblock {\em Ann. Statist.}, 2012.

\bibitem{loh2015regularized}
P.-L. Loh and M.~J. Wainwright.
\newblock Regularized {M-estimators} with nonconvexity: {Statistical} and
  algorithmic theory for local optima.
\newblock {\em J. Mach. Learn. Res.}, 16(1):559--616, 2015.

\bibitem{Lu2014GeneralizedNN}
C.~Y. Lu, J.~H. Tang, S.~C. Yan, and Z.~C. Lin.
\newblock Generalized nonconvex nonsmooth low-rank minimization.
\newblock {\em 2014 IEEE Conference on Computer Vision and Pattern
  Recognition}, pages 4130--4137, 2014.

\bibitem{Mazumder2010SpectralRA}
R.~Mazumder, T.~J. Hastie, and R.~Tibshirani.
\newblock Spectral regularization algorithms for learning large incomplete
  matrices.
\newblock {\em Journal of machine learning research : JMLR}, 11:2287--2322,
  2010.

\bibitem{Natarajan1995SparseAS}
B.~K. Natarajan.
\newblock Sparse approximate solutions to linear systems.
\newblock {\em SIAM J. Comput.}, 24:227--234, 1995.

\bibitem{negahban2011estimation}
S.~Negahban and M.~J. Wainwright.
\newblock Estimation of (near) low-rank matrices with noise and
  high-dimensional scaling.
\newblock {\em Ann. Stat.}, 39(2):1069--1097, 2011.

\bibitem{nesterov2007gradient}
Y.~Nesterov.
\newblock Gradient methods for minimizing composite objective function.
\newblock Technical report, Universit{\'e} catholique de Louvain, Center for
  Operations Research and Econometrics (CORE), 2007.

\bibitem{nesterov2013introductory}
Y.~Nesterov.
\newblock {\em Introductory lectures on convex optimization: {A} basic course},
  volume~87, chapter~2, pages 56,61.
\newblock Springer Science \& Business Media, Berlin, 2013.

\bibitem{recht2010guaranteed}
B.~Recht, M.~Fazel, and P.~A. Parrilo.
\newblock Guaranteed minimum-rank solutions of linear matrix equations via
  nuclear norm minimization.
\newblock {\em SIAM Rev.}, 52(3):471--501, 2010.

\bibitem{rosenbaum2010sparse}
M.~Rosenbaum and A.~B. Tsybakov.
\newblock Sparse recovery under matrix uncertainty.
\newblock {\em Ann. Stat.}, 38(5):2620--2651, 2010.

\bibitem{rosenbaum2013improved}
M.~Rosenbaum and A.~B. Tsybakov.
\newblock Improved matrix uncertainty selector.
\newblock In {\em From Probability to Statistics and Back: High-Dimensional
  Models and Processes--A Festschrift in Honor of Jon A. Wellner}, pages
  276--290. Institute of Mathematical Statistics, 2013.

\bibitem{Rotfeld1967RemarksOT}
S.~Yu. Rotfel'd.
\newblock Remarks on the singular numbers of a sum of completely continuous
  operators.
\newblock {\em Functional Analysis and Its Applications}, 1:252--253, 1967.

\bibitem{sagan2021lowrank}
A.~Sagan and J.~E. Mitchell.
\newblock Low-rank factorization for rank minimization with nonconvex
  regularizers.
\newblock {\em Comput. Optim. Appl.}, 79:273--300, 2021.

\bibitem{sorensen2015measurement}
{\O}.~S{\o}rensen, A.~Frigessi, and M.~Thoresen.
\newblock Measurement error in {LASSO}: {Impact} and likelihood bias
  correction.
\newblock {\em Stat. Sinica}, pages 809--829, 2015.

\bibitem{sorensen2018covariate}
{\O}.~S{\o}rensen, K.~H. Hellton, A.~Frigessi, and M.~Thoresen.
\newblock Covariate selection in high-dimensional generalized linear models
  with measurement error.
\newblock {\em J. Comput. Graph. Stat.}, 27(4):739--749, 2018.

\bibitem{tibshirani1996regression}
R.~Tibshirani.
\newblock Regression shrinkage and selection via the {Lasso}.
\newblock {\em J. Royal Stat. Soc. B}, 58(1):267--288, 1996.

\bibitem{wainwright2014structured}
M.~J. Wainwright.
\newblock Structured regularizers for high-dimensional problems: {Statistical}
  and computational issues.
\newblock {\em Annu. Rev. Stat. Appl.}, 1:233--253, 2014.

\bibitem{wu2020scalable}
J.~Wu, Z.~M. Zheng, Y.~Li, and Y.~Zhang.
\newblock Scalable interpretable learning for multi-response error-in-variables
  regression.
\newblock {\em J. Multivar. Anal.}, page 104644, 2020.

\bibitem{wu2019joint}
X.~Wu, X.~Zhang, N.~Wang, and Y.~Cen.
\newblock Joint sparse and low-rank multi-task learning with extended
  multi-attribute profile for hyperspectral target detection.
\newblock {\em Remote. Sens.}, 11:150, 2019.

\bibitem{yao2017large}
Q.~M. Yao, J.~T. Kwok, T.~F. Wang, and T.-Y. Liu.
\newblock Large-scale low-rank matrix learning with nonconvex regularizers.
\newblock {\em {IEEE} Trans. Pattern Anal. Machine Intell.}, 41:2628--2643,
  2017.

\bibitem{yao2015fast}
Q.~M. Yao, J.~T. Kwok, and L.~W. Zhong.
\newblock Fast low-rank matrix learning with nonconvex regularization.
\newblock {\em {IEEE} ICDM}, pages 539--548, 2015.

\bibitem{Yue2016API}
M.-C. Yue and M.-C.~S. Anthony.
\newblock A perturbation inequality for concave functions of singular values
  and its applications in low-rank matrix recovery.
\newblock {\em Applied and Computational Harmonic Analysis}, 40:396--416, 2016.

\bibitem{Zhang2010}
C.-H. Zhang.
\newblock Nearly unbiased variable selection under minimax concave penalty.
\newblock {\em Ann. Stat.}, 38(2):894--942, 2010.

\bibitem{zhou2014regularized}
H.~Zhou and L.~X. Li.
\newblock Regularized matrix regression.
\newblock {\em J. Royal Stat. Soc. B}, 76(2):463--483, 2014.

\bibitem{zou2005regularization}
H.~Zou and T.~Hastie.
\newblock Regularization and variable selection via the elastic net.
\newblock {\em J. Royal Stat. Soc. B}, 67(2):301--320, 2005.

\end{thebibliography}

\end{document}